\numberwithin{equation}{section}
\newtheorem{lemma}{Lemma}[section]
\newtheorem{theorem}{Theorem}[section]
\newtheorem{definition}{Definition}[section]
\newtheorem{corollary}{Corollary}[section]
\newtheorem{remark}{Remark}[section]
\newcommand{\RR}{{\mathbb R}}
\newcommand{\rS}{{\mathbb S}}
\newcommand{\rA}{{\mathbb A}}
\newcommand{\rB}{{\mathbb B}}
\newcommand{\ZZ}{{\mathbb Z}}
\newcommand{\eeq}{\end{equation}}
\newcommand{\ra}{\rightarrow}
\newcommand{\beql}[1]{\begin{equation}\label{#1}}
\newcommand{\eqn}[1]{(\ref{#1})}
\newcommand{\beq}{\begin{displaymath}}
\newcommand{\eeqno}{\end{displaymath}}
\newcommand{\af}{\alpha}
\newcommand{\lm}{\lambda}
\newcommand{\ep}{\epsilon}
\newcommand{\qandq}{\quad\mbox{and}\quad}
\newcommand{\qasq}{\quad\mbox{as}\quad}
\newcommand{\qinq}{\quad\mbox{in}\quad}
\newcommand{\qforallq}{\quad\mbox{for all}\quad}
\newcommand{\bes}{\begin{equation*}}
\newcommand{\ees}{\end{equation*}}
\newcommand{\bequ}{\begin{equation}}
\newcommand{\bi}{\begin{itemize}}
\newcommand{\ei}{\end{itemize}}
\newcommand{\bsplit}{\begin{split}}
\newcommand{\esplit}{\end{split}}
\newcommand{\bea}{\begin{eqnarray}}
\newcommand{\eea}{\end{eqnarray}}
\newcommand{\beas}{\begin{eqnarray*}}
\newcommand{\eeas}{\end{eqnarray*}}
\newcommand{\btab}{\begin{tabular}}
\newcommand{\etab}{\end{tabular}}
\newcommand{\barq}{\bar{Q}}
\newcommand{\barz}{\bar{Z}}
\def\lm{\lambda}
\def\tinf{\rightarrow\infty}
\def\AA{\mathbb{A}}
\begin{document}


\begin{frontmatter}

\title{An ODE for an Overloaded X Model Involving a Stochastic Averaging Principle\protect }
\runtitle{ODE via an Averaging Principle}

\begin{aug}
\author{\fnms{Ohad} \snm{Perry}\thanksref{t1}\ead[label=e1]{o.perry@cwi.nl}}
\and
\author{\fnms{Ward} \snm{Whitt}\thanksref{t2}\ead[label=e2]{ww2040@columbia.edu}}
\runauthor{O. Perry and W. Whitt}

\thankstext{t1}{Centrum Wiskunde \& Informatica (CWI)}
\thankstext{t2}{Department of Industrial Engineering and Operations Research, Columbia University}

\affiliation{CWI and Columbia University}

\address{CWI, Science Park 123, 1098 XG Amsterdam, \\
The Netherlands,\\
\printead{e1}}

\address{Department of Industrial Engineering and Operations Research, \\
Columbia University
New York, New York 10027-6699, \\
\printead{e2}}

\end{aug}

\begin{abstract}
We study an ordinary differential equation (ODE) arising as
the many-server heavy-traffic fluid limit of a sequence of overloaded
Markovian queueing models with two customer classes and two service pools.
The system, known as the X model in the call-center literature,
operates under the
fixed-queue-ratio-with-thresholds (FQR-T) control, which we
proposed in a recent paper as a way for one service system to help
another in face of an unanticipated overload.  Each pool serves only its own class
until a threshold is exceeded; then one-way sharing is activated
with all customer-server assignments then driving
the two queues toward a fixed ratio.  For large systems, that fixed ratio
is achieved approximately.
The ODE describes
system performance during an overload.
The control
is driven by a queue-difference stochastic process, which operates in a faster time scale
than the queueing processes themselves, thus achieving a time-dependent
steady state instantaneously in the limit.  As a result, for the ODE, the
driving process is replaced by its long-run average behavior at
each instant of time;
i.e., the ODE involves a heavy-traffic averaging
principle (AP).
\end{abstract}

\begin{keyword}[class=AMS]
\kwd[Primary ]{60K25}
\kwd[; secondary ]{60K30, 60F17, 90B15, 90B22, 37C75, 93D05}
\end{keyword}

\begin{keyword}
\kwd{many-server queues}
\kwd{averaging principle}
\kwd{separation of time scales}
\kwd{heavy traffic}
\kwd{deterministic fluid approximation}
\kwd{quasi-birth-death processes}
\kwd{ordinary differential equations}
\kwd{overload control}
\end{keyword}

\end{frontmatter}

\section{Introduction}\label{secIntro}

We study an {\em ordinary differential equation} (ODE)
that arises as the {\em many-server heavy-traffic} (MS-HT) fluid limit
of a sequence of overloaded Markovian X queueing models under the {\em fixed-queue-ratio-with-thresholds} (FQR-T) control.
The ODE is especially interesting, because it involves a heavy-traffic {\em averaging principle} (AP).

The system consists of
two large service pools that are designed to operate independently, but can help each other when one of the pools, or both,
encounter an unexpected overload, manifested by an instantaneous shift in the arrival rates.
We assume that the time that the arrival rates shift and the values of the new arrival rates are not
known when the overload occurs.  We want the control to automatically detect the overload.
The FQR-T control is designed to prevent sharing of customers (i.e., sending customers to be served
at the other-class service pool) when sharing is not needed, and automatically activate sharing when the system becomes overloaded due to
a sudden shift in the arrival rates.

This paper is the third in a series of five papers.  First,
In
\cite{PeW09} we initiated study of this overload-control problem and
proposed the FQR-T control; see \cite{PeW09} for a discussion of related literature.  We used a heuristic stationary fluid approximation to
derive
the optimal control when a convex holding cost is charged to the two queues during the overload incident.
Within that framework, we showed with simulations that FQR-T outperforms the best fixed allocation of servers,
even when the new arrival rates are known.
The stationary point of the fluid model was derived using a heuristic flow-balance argument, which equates the rate of flow
into the system to the rate of flow out of the system, when the system is in steady state.

Second, in \cite{PeW08} we applied the heavy-traffic AP as
an engineering principle in order to justify the ODE considered here to describe the
transient fluid approximation of the X system under FQR-T after an overload has occurred.  We observed
that the FQR-T control
is driven by a queue-difference stochastic process, which operates in a faster time scale
than the queueing processes themselves, so that it should
achieve a time-dependent
steady state instantaneously in the MS-HT
limit, i.e., as the scale (arrival rate and number of servers) increases; see \S \ref{secHT}.  We argued heuristically that
 the ODE should arise as the limit of a properly-scaled sequence of overloaded X-model systems, provided
 that the driving process is replaced by its long-run average behavior at
each instant of time.  We performed simulation to justify the approximations.

The present paper and the next two provide mathematical support.
The present paper establishes important properties of the ODE suggested in \cite{PeW08}.
The fourth and fifth papers prove limits.
In \cite{PeW10a} we prove that the fluid approximation, as a deterministic function of time, arises as
the MS-HT limit of a sequence of $X$ models; i.e., we prove a {\em functional weak law of large numbers} (FWLLN).
This FWLLN is based on the AP; see \cite{CPR95,HK94} for previous examples.
In \cite{PeW10b} we prove the corresponding {\em functional central limit theorem} (FCLT) that describes the stochastic fluctuations
about the deterministic fluid path.

We prove convergence to the ODE in \cite{PeW10a} by the standard two-step procedure, described in
Ethier and Kurtz ~\cite{EK86}:
(i) establishing tightness and (ii) uniquely characterizing the limit process.
The tightness argument follows familiar lines, but characterizing the limit process
turns out to be challenging.   Indeed, characterizing the limit process depends on the results here.
Thus, the present paper provides a crucial ingredient for the limits established in \cite{PeW10a,PeW10b}.

The AP makes the ODE unconventional.
The AP creates
a singularity region, causing the ODE not to be continuous in its
full state space. Hence, classical results of ODE theory, such as
those establishing
existence, uniqueness and stability of solutions, cannot be
applied directly. Moreover, existing algorithms for numerically
solving ODE's cannot be applied directly either, since the
solution to the ODE requires that the time-dependent steady state
of the {\em fast-time-scale process} (FTSP) be computed at each instant.
Nevertheless, we establish the existence of a
unique solution to the ODE, show that there exists a
unique stationary point; and show that
the fluid process converges to its stationary point as time evolves. Moreover,
we show that the convergence to stationarity is
exponentially fast.  The key is a careful analysis of the FTSP,
which we represent as a {\em quasi-birth-and-death} (QBD) process.
Finally, we provide a numerical algorithm for solving the ODE
based on the matrix-geometric method \cite{LR99}.

\textbf{Here is how the rest of this paper is organized:}  The next two sections provide background.
In \S \ref{secModel}
we elaborate on the X queueing model
and the FQR-T control;
that primarily is a review of ~\cite{PeW09}.  In \S \ref{secRep}
we provide a brief overview of the MS-HT scaling and a heuristic explanation of the AP.
In \S \ref{secODE} we introduce the ODE that we study in subsequent sections.
In \S \ref{secMain} we state out main result, establishing the existence of a unique solution.
In \S \ref{secFast} we establish properties of the FTSP, which depends on the state of the ODE,
and whose steady-state distribution influences the evolution of the ODE. In \S \ref{secUnique} we define the
state space of the ODE, and prove the main theorem about existence of a unique solution. In \S \ref{secStat} we
establish the existence of a unique stationary point and show that the fluid solution converges to
that stationary point as time evolves.
In \S \ref{secExpo} we prove that a solution
converges to stationarity exponentially fast.
In \S \ref{secSufficient} we provide conditions for global state space collapse, i.e., for having the AP
operate for all $t \ge 0$.
In \S \ref{secAlg} we develop an algorithm to numerically solve the ODE (given an initial condition), based on the theory
developed in the previous sections.
We conclude in \S \ref{secProofThLipsz} with one postponed proof.

Additional material appears in an appendix, available from the authors' web pages.
There we analyze the system with an underloaded initial state, and show that the approximating fluid models then
lead to our main ODE in finite time.
We elaborate on the algorithm and give two more numerical examples.
We also provide a few omitted proofs.
Finally, we mention remaining open problems.

\section{Preliminaries} \label{secModel}

This section reviews the highlights of ~\cite{PeW09}, starting with
a definition of the original X queueing model, for which the ODE serves as
an approximation.

\subsection{The Original Queueing Model}

The Markovian X model has two classes of customers,
arriving according to independent Poisson processes with rates $\tilde{\lm}_1$ and $\tilde{\lm}_2$.
There are two queues, one for each class, in which customers that are not routed to service immediately upon arrival wait
to be served.  Customers are served from each queue in order of arrival.
 Each class-$i$ customer has limited patience, which is assumed to be exponentially distributed with rate $\theta_i$,
$i = 1,2$. If a customer does not enter service before he runs out of patience, then he abandons the queue.
The abandonment keep the system stable for all arrival and service rates.

There are two service pools, with pool $j$ having $m_j$ homogenous servers (or agents) working in parallel.
This X model was introduced to study two large systems that
are designed to operate independently under normal loads, but can help each other in face of
unanticipated overloads.  We assume that all servers are cross-trained, so that they can serve both classes.
The service times depend on both the customer class $i$ and the server type $j$, and are exponentially distributed;
the mean service time for each class-$i$ customer by each pool-$j$ agent is $1/\mu_{i,j}$.
All service times, abandonment times and arrival processes are assumed to be mutually independent.
The FQR-T control described below assigns customers to servers.

We assume that, at some unanticipated
point of time, the arrival rates change, with at least one increasing.
We further assume that the staffing cannot be changed (in the time scale under consideration) to respond
to this unexpected change of arrival rates. Hence, the arrival processes change from Poisson
with rates $\tilde{\lm}_1$ and $\tilde{\lm}_2$ to Poisson processes with {\em unknown} (but fixed) rates $\lm_1$ and $\lm_2$,
 where $\tilde{\lm}_i < m_i / \mu_{i,i}$, $i = 1,2$ (normal loading),
but $\lm_i > \mu_{i,i} m_i$ for at least one $i$ (the unanticipated overload).
Without loss of generality, we assume that pool $1$ (and class-$1$) is the overloaded (or more overloaded) pool.
The fluid model (ODE) is an approximation for the system performance after the overload has occurred,
so that we start with the new arrival rate pair $(\lm_1,\lm_2)$.

\subsection{The FQR-T Control for the Original Queueing Model}\label{secFQRorig}

The FQR-T control is based on two positive thresholds, $k_{1,2}$ and $k_{2,1}$, and the two queue-ratio parameters, $r_{1,2}$ and
$r_{2,1}$. We define two queue-difference stochastic processes $\tilde{D}_{1,2}(t) \equiv Q_1(t) - r_{1,2} Q_2(t)$ and
$\tilde{D}_{2,1} \equiv r_{2,1} Q_2(t) - Q_1(t)$.
As long as $\tilde{D}_{1,2}(t) \le k_{1,2}$ and $\tilde{D}_{2,1}(t) \le k_{2,1}$ we consider the system to be normally loaded (i.e., not overloaded)
so that no sharing is allowed. Hence, in that case, the two classes operate independently.
Once one of these inequalities is violated, the system is considered to be overloaded, and sharing is initialized.
For example, if $\tilde{D}_{1,2}(t) > k_{1,2}$, then class $1$ is judged to be overloaded
and service-pool $2$ is allowed to start helping queue $1$.
As soon as the first class-$1$ customer starts his service in pool $2$, we drop the threshold $k_{1,2}$, but keep the other threshold $k_{2,1}$.
Now, the sharing of customers is done as follows:
If a type-$2$ server becomes available at time $t$, then it will take its next customer from the head of queue $1$ if
$\tilde{D}_{1,2}(t) > 0$. Otherwise, it will take its next customer from the head of queue $2$.
If at some time $t$ after sharing has started queue $1$ empties, or $\tilde{D}_{2,1}(t) = k_{2,1}$ then
the threshold $k_{1,2}$ is reinstated.
The control works similarly if class $2$ is overloaded, but with pool-$1$ servers helping queue $2$, and with the threshold $k_{2,1}$
dropped once it is crossed.

In addition, we impose the condition of {\em one-way sharing}:  we allow sharing in only one direction at any time.
Thus, in the example above, where sharing is done with pool $2$ helping class $1$, we do not later allow pool $1$ to help class $2$
until there are no more pool-$2$ agents serving class-$1$ customers.  Sharing is initiated with pool $1$ helping class $2$
when $\tilde{D}_{2,1}(t) > k_{2,1}$ and there are no pool-$2$ agents serving class-$1$ customers.  And similarly in the other direction.

Let $Q_i(t)$ be the number of customers in the class-$i$ queue at time $t$, and let $Z_{i,j}(t)$ be the number of class-$i$
customers being served in pool $j$ at time $t$, $i,j = 1,2$.
Let $q_i (t)$ and $z_{i,j} (t)$ be the fluid approximations of $Q_i (t)$ and $Z_{i,j} (t)$, respectively.
With the assumptions on the X system and the FQR-T control, the six-dimensional stochastic process
$(Q_i(t), Z_{i,j}(t) ; i,j = 1,2)$ describing the overloaded system becomes a {\em continuous-time Markov chain} (CTMC) (with stationary transition rates).

Once sharing is initialized, the control makes the overloaded $X$ model operate as an overloaded $N$ model,
and keeps the two queues at approximately the target ratio, e.g.,
if queue $1$ is being helped, then $Q_1(t) \approx r_{1,2} Q_2(t)$. If sharing is done in the opposite direction,
then $r_{2,1} Q_2(t) \approx Q_1(t)$ for all $t \ge 0$.  That is substantiated by
simulation experiments, some of which are reported in ~\cite{PeW09,PeW08}.

In addition to the thresholds $k_{1,2}$ and $k_{2,1}$, discussed above, the model also includes shifting constants
$\kappa_{1,2}$ and $\kappa_{2,1}$.  The shifting constants may be introduced after the threshold is dropped, because it may be dictated by the
optimal ratio function in \cite{PeW09}.
Let $q^*_i$ and $z^*_{i,j}$, $i,j = 1,2$ denote the fluid steady state values of $q_i (t)$ and $z_{i,j} (t)$.
(We will show that a unique steady state, or stationary point, exists for the fluid approximation in \S \ref{secStat} below.)
If the optimal relation between the steady state fluid queues is
$q^*_1 = r^*_{1,2} q^*_2 + \kappa_{1,2}$ for some $\kappa_{1,2} \in \RR$, where $r^*_{1,2}$ denotes the fluid optimal ratio,
(assuming that pool $2$ needs to help class $1$), as is the case
when the holding cost is separable and quadratic with non-zero constant and linear terms, then we use the {\em shifted FQR-T} control.
Shifted FQR-T centers about $\kappa_{1,2}$ instead at about zero. For example, if class $1$ is overloaded, then every
server takes his new customer from the head of queue $1$ if $\tilde{D}_{i,j}(t) > \kappa_{1,2}$.
Otherwise, it takes the new customer
from the head of its own class queue.
We call that control {\em shifted FQR-T} since it keeps the two queues at a fixed ratio, but shifted by the constant $\kappa_{1,2}$.
We can think of FQR-T as the special case of shifted FQR-T with $\kappa_{1,2} = 0$.

The beauty of the control is that, for large-scale service systems,
FQR-T and shifted FQR-T tend to achieve their purpose; i.e.,
they keep the two queues approximately in fixed relation.
In the stochastic system this means that the two-dimensional vector $(Q_1(t), Q_2(t))$ evolves approximately as a
one-dimensional process.  In the fluid model this approximation becomes exact;
We no longer need to consider the three-dimensional process $x(t) \equiv (q_1(t), q_2(t), z_{1,2}(t))$,
since it is enough to consider $z_{1,2}(t)$ together with only one of the queues. The other queue is determined by the first via the
{\em state-space collapse} (SSC) equation $q_1(t) = r_{i,j}q_2(t) + \kappa_{i,j}$, depending on which way the sharing is performed.
In ~\cite{PeW10a} SSC is shown to hold
asymptotically in the MS-HT limit. 

\section{The Many-Server Heavy-Traffic Fluid Limit} \label{secRep}

In this section we briefly describe the convergence of the sequence of stochastic systems to the fluid limit,
as established in ~\cite{PeW10a}.
Without loss of generality {\em we assume that class $1$ is overloaded, and receives help from service-pool $2$}.
(Class $2$ may also be overloaded, but less than class $1$, so that pool $2$ should be serving some class-$1$ customers.)

\subsection{Many-Server Heavy-Traffic (MS-HT) Scaling}\label{secHT}

To develop the fluid limit in \cite{PeW10a}, we consider a sequence of X systems, indexed by $n$ (denoted by superscript), with arrival rates and number of servers growing
proportionally to $n$, i.e.,
\bequ \label{MS-HTscale}
\bar{\lm}^n_i \equiv \frac{\lm^n_i}{n} \ra \lm_i \qandq \bar{m}^n_i \equiv \frac{m^n_i}{n} \ra m_i \qasq n \tinf,
\eeq
with the service and abandonment rates held fixed.
We then define the associated fluid-scaled stochastic processes
\bequ \label{fluidScale}
\barq^n_i (t) \equiv \frac{Q^n_i (t)}{n} \qandq \barz^n_{i,j} (t) \equiv \frac{Z^n_{i,j} (t)}{n}, \quad i,j = 1,2, \quad t \ge 0.
\eeq

For each system $n$, there are threshold $k^n_{1,2}$ and $k^n_{2,1}$, scaled so that
\bequ \label{thresholds}
\frac{k^n_{i,j}}{n} \ra 0 \qandq \frac{k^n_{i,j}}{\sqrt{n}} \ra \infty \qasq n \tinf, \quad i,j = 1,2.
\eeq
The first scaling by $n$ is chosen to make the thresholds asymptotically negligible in MS-HT fluid scaling, so they detect
overloads immediately when they occur.  The second scaling by $\sqrt{n}$ is chosen to make
the thresholds asymptotically infinite in MS-HT diffusion scaling, so that asymptotically the thresholds will
not be exceeded under normal loading.  It is significant that MS-HT scaling shows that we should be able to
simultaneously satisfy both conflicting objectives in large systems.

There are also the shifting thresholds $\kappa^n_{i,j}$, arising from consideration of
separable quadratic cost functions; see \S \ref{secFQRorig}, but we do not specify their scale.
If sharing is taking place, then at some time it was activated by sending the first class-$1$
customer to service pool $2$. We thus need only consider $\kappa^n_{1,2}$ and the weighted-difference process
$\tilde{D}^n_{1,2}(t) \equiv Q^n_1(t) - r^*_{1,2} Q^n_2(t)$.
Note that if $\kappa^n_{1,2} \ra \infty$, then $\tilde{D}^n_{1,2} \ra \infty$ as $n \tinf$.
Hence, we redefine the difference process. Let
\bequ \label{Dprocess}
D^n(t) \equiv (Q^n_1(t) - \kappa^n) - r Q^n_2(t), \quad t \ge 0,
\eeq
where $\kappa \equiv \kappa_{1,2}$ and $r \equiv r^*_{1,2}$.

With the new definition in \eqref{Dprocess}, we allow $\kappa^n$ to be of any order less than or equal to $O(n)$;
in particular, we assume that $\kappa^n / n \ra \kappa$ for $0 \le \kappa < \infty$.
There are two principle cases:  $\kappa = 0$ and $\kappa >0$.
The first case produces FQR (after sharing has began);
the second case produces shifted FQR (shifted by the constant $\kappa^n$).

With the new process $D^n$ in \eqref{Dprocess}, we can apply the same FQR routing rule for both the FQR and shifted FQR cases:
if $D^n(t) > 0$, then every newly available agent (in either pool) takes
his new customer from the head of the class-$1$ queue. If $D^n(t) \le 0$, then every newly available agent takes his new customer
from the head of his own queue.

\subsection{A Heuristic View of the AP}\label{secAP}

The AP is concerned with the system behavior when sharing is taking place; i.e., when some, but not all,
of the pool 2 agents are serving class 1.  That takes place when $q_1 = r q_2 + \kappa$.
In that situation, it can be shown that the queue-difference process $D^n$ in \eqref{Dprocess}
is an order $O(1)$ process, without any spatial
scaling, i.e., for each $t$, the sequence of unscaled random variables $\{D^n (t): n \ge 1\}$
turns out to be stochastically bounded (or tight) in $\RR$.
That implies that $D^n$ operates in a time scale that is different from the other processes $Q^n_i$ and $Z^n_{1,2}$, which are scaled
by dividing by $n$ in \eqref{fluidScale}. 
With the MS-HT scaling in \eqref{MS-HTscale},
in order for the two queues to change significantly (in a relative sense,
which is captured by the scaling in \eqref{fluidScale}),
there needs to be $O(n)$ arrivals and departures
from the queues.  In contrast, the difference process $D^n$ can never go far from $0$, because it has
drift pointing towards $0$ from both above and below.  Thus, the difference process oscillates more and more
rapidly about $0$ as $n$ increases.  
Thus, over short time intervals in which $X^n$ remains nearly unchanged for large $n$,
the process $D^n$ moves rapidly in its state space, nearly achieving a local steady state.
As $n$ increases, the speed of the difference process increases, so that in the limit, it achieves a steady state instantaneously.
That steady state is a local steady state, because it depends on $x(t)$, the fluid limit $x$ at time $t$.

To formalize this separation of time scales,
we define a family of {\em time-expanded} difference processes:  for each $n \ge 1$ and $t \ge 0$, let
\bequ \label{fast102}
D^n_t (s) \equiv D^n (t + s / n), \quad s \ge 0.
\eeq
Dividing $s$ by $n$ in \eqref{fast102} allows us to examine what is happening right after time $t$ in the faster time scale.
For each $t$, a different process $D^n_t$ is defined.  For every $t \ge 0$ and $s > 0$,
the time increment $[t, t + s / n)$ becomes infinitesimal in the limit.
A main result in \cite{PeW10a} (Theorem 5.3) is that, for each $t \ge 0$,
\bequ \label{fastlim}
D^n_t \equiv \{D^n_t (s): s \ge 0\} \Rightarrow D_t (s) \equiv \{D_t (s): s \ge 0\} \qinq D,
\eeq
as $n \tinf$, where the limit
$D_t \equiv \{ D_t(s) : s \ge 0 \}$ is the FTSP.

For each $n$, the control depends on whether or not $D^n (t) > 0$.  In turn, the limiting ODE
depends on the corresponding steady-state probability of the FTSP,
\bequ \label{fast105}
\pi_{1,2}(x(t)) \equiv \lim_{s \tinf} P(D_t(s) > 0) 
\eeq
which depends on $x$ because the distribution of $\{D_t(s) : s \ge 0 \}$ depends on the value of $x(t) \in \RR^3$.

\section{The ODE} \label{secODE}

We now specify the ODE,
which is the main subject of this paper.
We assume that class $1$ is overloaded, even after receiving help from pool $2$.
Hence both pools are fully busy and some pool-$2$ agents are helping class $1$, so that $z_{1,1} (t) = m_1$, $z_{2,1} (t) = 0$
and $z_{1,2} (t) + z_{2,2} (t) = m_2$.  As a consequence, we only need consider $z_{1,2}$ among these four variables.

We introduce an ODE to describe the evolution of the system state, which here is the vector $x(t) \equiv (q_1 (t), q_2 (t), z_{1,2} (t))$.  The
associated state space is
$\rS \equiv [0,\infty)^2 \times [0, m_2]$.  In particular,
we consider the autonomous ODE
\bequ \label{ode}
\dot{x}(t) \equiv (\dot{q}_1(t), \dot{q}_2(t), \dot{z}_{1,2}(t)) = \Psi(x(t)) \equiv \Psi(q_1 (t), q_2 (t), z_{1,2} (t)), \quad t \ge 0,
\eeq
where $\Psi : [0, \infty)^2 \times [0, m_2] \ra \RR^3$ can be displayed via
\bequ \label{odeDetails}
\bsplit
\dot{q}_{1} (t)   & \equiv  \lambda_1  - m_1 \mu_{1,1} - \pi_{1,2} (x(t))\left[z_{1,2} (t) \mu_{1,2} + (m_2 - z_{1,2} (t)) \mu_{2,2}\right]
- \theta_1 q_1 (t) \\
\dot{q}_{2} (t)   & \equiv  \lambda_2   - (1 - \pi_{1,2}(x(t))) \left[(m_2 - z_{1,2} (t)) \mu_{2,2} + z_{1,2} (t) \mu_{1,2}\right]
- \theta_2 q_2 (t) \\
\dot{z}_{1,2} (t) & \equiv  \pi_{1,2}(x(t))(m_2 - z_{1,2} (t)) \mu_{2,2} - (1 - \pi_{1,2}(x(t))) z_{1,2} (t) \mu_{1,2},
\end{split}
\eeq
with $\pi_{1,2}: [0,\infty)^2 \times [0, m_2] \ra [0,1]$ defined by \eqref{fast105} when $q_1 - r q_2 = \kappa$, $\pi_{1,2} (x) \equiv 1$ when $q_1 - r q_2 > \kappa$ and
$\pi_{1,2} (x) \equiv 0$ when $q_1 - r q_2 < \kappa$.
We also consider the associated
{\em initial value problem} (IVP)
\bequ \label{IVP}
\dot{x}(t)  = \Psi(x(t)), \quad x(0) =  w_0
\eeq
for $\Psi(x)$ in \eqref{ode} - \eqref{odeDetails}.

\section{The Main Result}\label{secMain}

The state space $\rS$ is a subset of $\RR^3$ with the boundary constraints:  $q_1 \ge 0$, $q_2 \ge 0$ and $0 \le z_{1,2} (t) \le m_2$.
The differential equation for $z_{1,2}$ prevents its boundary states $0$ and $m_2$ from being active,
because $\dot{z}_{1,2} (t) = \pi_{1,2}(x(t))m_2 \mu_{2,2} \ge 0$ when $z_{1,2} (t) = 0$ and $\dot{z}_{1,2} (t) = (1 - \pi_{1,2}(x(t))m_2 \mu_{1,2} \le 0$
when $z_{1,2} (t) = m_2$.
However, the queue-length constraints can alter the evolution.  In general, we can have $\dot{q}_i (t) < 0$ when $q_i (t) = 0$, which we understand
as leaving $q_i (t)$ fixed at $0$.  However, we are primarily interested in overloaded cases, in which these boundaries are not reached.
Then we can consider the ODE without constraints.

Recall that the shifting constant satisfies $\kappa \ge 0$.
We consider the {\em restricted state space} $S \equiv [\kappa, \infty) \times [0, \infty) \times [0,m_2]$.
We thus avoid the transient region in which $q_1 < r q_2 + \kappa$ with $q_2 = 0$, where $\dot{q}_1 (t) > 0$
and $\dot{q}_2 (t) < 0$, but $q_2$ remains at $0$ while $q_1$ increases to the shifting constant $\kappa$.
The restricted state space, with $q_1 \ge \kappa$ is shown to be the space of the fluid limit of the system
in \cite{PeW10a}. We will also show in Theorem \ref{thInS} below that the ODE cannot leave this restricted state space.

It is convenient to specify the conditions on the model parameters in terms of the steady-state formulas for the queues in isolation.
For that purpose, let $q_i^a$ be the length of fluid-queue $i$ and
let $s^a_i$ be the amount of spare service capacity in service-pool $i$, in steady state, when there is no sharing,
$i = 1,2$.  The quantities
$q_i^a$ and $s^a_i$ are well known, since they are the steady state quantities of the fluid model for the Erlang-A model ($M/M/m_i + M$) with arrival-rate
$\lm_i$, service-rate $\mu_{i,i}$ and abandonment-rate $\theta_i$; see Theorem 2.3 in \cite{W04}, especially equation (2.19), and \S 5.1 in \cite{PeW09}.
In particular,
\bequ \label{Qalone}
q_i^a \equiv \frac{(\lm_i - \mu_{i,i} m_i)^+}{\theta_i} \qandq s^a_i \equiv \left( m_i - \frac{\lm_i}{\mu_{i,i}} \right)^+, \quad i = 1,2,
\eeq
where $(x)^+ \equiv \max\{ x, 0 \}$.
It is easy to see that
$q^a_i s^a_i = 0$, $i = 1,2$. We thus make the following assumption,
which is {\em assumed to hold henceforth}.
\\ \\
{\sc Assumption A.}
{\em
\begin{enumerate}
\item[(I)]  The model parameters satisfy $\theta_1(q^a_1 - \kappa) \ge \mu_{1,2} s^a_2$.
\item[(II)] The initial conditions satisfy $x (0) \in \rS \equiv [\kappa, \infty) \times [0, \infty) \times [0, m_2]$.
\end{enumerate}
}

We now explain these assumptions.
Clearly, a sufficient condition for both pools to be overloaded is to have
$s^a_1 = s^a_2 = 0$, i.e., to have no spare service capacity in either pool in their individual steady states.
However, if $s^a_2 > 0$, both pools can still be overloaded,
provided that enough class-$1$ fluid is processed in pool $2$.
To have the solution be eventually in $\rS$, we require that $\theta_1(q^a_1 - \kappa) \ge \mu_{1,2} s^a_2$.
This condition ensures that service pool $2$
is also full of fluid when sharing is taking place, i.e., $z_{1,2}(t) + z_{2,2}(t) = m_2$ for all $t \ge 0$ (assuming that pool $2$
is full at time $0$).
To see why, note that when service-pool $2$ has spare service capacity ($s^a_2 > 0$), sharing will be activated if $q^a_1 > \kappa$,
because $q^a_2 = 0$.
Now, the maximum amount of class-$1$ fluid that pool $2$ can process, while still processing all of the class-$2$ fluid
(so that $q_2$ is kept at zero), is $\mu_{1,2} s^a_2$. hence, $\mu_{1,2} s^a_2$ is the minimal amount of
class-$1$ fluid that should flow to pool $2$.
On the other hand, $\theta_1 q^a_1 = \lm_1 - \mu_{1,1} m_1$  is equal to
the ``extra'' class-$1$ fluid input, i.e., all the class-$1$ fluid that pool $1$ cannot process.
Some of this ``extra'' class-$1$ fluid might abandon (if $q_1 > 0$). The minimal amount of class-$1$ fluid that abandons is
$\theta_1 \kappa$ (but $\kappa$ can be equal to zero).

We thus require that all the class-$1$ fluid, {\em that is not served in pool $1$},
minus the minimal amount of class-$1$ fluid that abandons, is larger than $\mu_{1,2} s^a_2$. With this requirement, pool $2$ is
assured to be full, assuming that it is initialized full. (If pool $2$ is not initialized full, then it will fill up after some
finite time period; see the appendix.)

\begin{remark}{$($class $1$ need not be more overloaded than class $2$$)$}
{\em  In this paper we are interested in analyzing the ODE \eqref{odeDetails} as given.
Hence, in Assumption A we do not assume that class $1$ is more overloaded than class $2$; i.e.,
we do not require that $q^a_1 - \kappa \ge r q^a_2$.
This extra assumption is not needed for our results for the specified ODE.
In contrast, this assumption is needed in order to show that the ODE holds as the fluid limit, with class $1$ receiving help;
see Assumption 1 in \cite{PeW10a}.
}
\end{remark}

We exploit Assumption A to show that the boundaries of $\rS$ in $\RR^3$ play no role.
\begin{theorem} \label{thInS}
$x (t) \in \rS$ for all $t \ge 0$.
\end{theorem}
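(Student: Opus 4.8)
The plan is to show that $\rS \equiv [\kappa,\infty)\times[0,\infty)\times[0,m_2]$ is forward-invariant for the ODE \eqref{odeDetails} by checking the vector field $\Psi$ on each face of the boundary of $\rS$ and verifying that it never points strictly outward. There are three types of boundary faces to consider: $z_{1,2}=0$, $z_{1,2}=m_2$, $q_2=0$, and $q_1=\kappa$ (the last being the genuinely new constraint). For the $z_{1,2}$ faces the argument is already given in the text before the theorem: when $z_{1,2}=0$ one has $\dot z_{1,2}=\pi_{1,2}(x)m_2\mu_{2,2}\ge 0$, and when $z_{1,2}=m_2$ one has $\dot z_{1,2}=-(1-\pi_{1,2}(x))m_2\mu_{1,2}\le 0$, so these faces are never crossed. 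For the $q_2=0$ face, $\dot q_2 = \lambda_2-(1-\pi_{1,2}(x))[\,m_2\mu_{2,2}+0\,] \ge \lambda_2 - m_2\mu_{2,2}$; since $\pi_{1,2}=1$ whenever $q_1-rq_2>\kappa$, and on the slab $q_2=0$, $q_1\ge\kappa$ one has $q_1-rq_2=q_1\ge\kappa$, the interesting subcase is $q_1=\kappa$ where $\pi_{1,2}$ may be strictly between $0$ and $1$; even there, the abandonment term $-\theta_2 q_2$ vanishes and so $\dot q_2$ can only be negative if $\lambda_2<(1-\pi_{1,2})m_2\mu_{2,2}+\cdots$, which we interpret (as the text stipulates) as keeping $q_2$ pinned at $0$. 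So the $q_2=0$ face causes no escape either.

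The crux is the face $q_1=\kappa$ (with $q_2\ge 0$, $z_{1,2}\in[0,m_2]$). On this face I must show $\dot q_1\ge 0$, i.e.
\[
\lambda_1 - m_1\mu_{1,1} - \pi_{1,2}(x)\big[z_{1,2}\mu_{1,2}+(m_2-z_{1,2})\mu_{2,2}\big] - \theta_1\kappa \;\ge\; 0 .
\]
Here $q_1-rq_2=\kappa-rq_2\le\kappa$, so $\pi_{1,2}(x)\in[0,1]$ (with $\pi_{1,2}=1$ only if $q_2=0$, i.e.\ on the edge where the previous face analysis applies). Since the bracket $z_{1,2}\mu_{1,2}+(m_2-z_{1,2})\mu_{2,2}$ is a convex combination of $\mu_{1,2}$ and $\mu_{2,2}$, it is at most $\max\{\mu_{1,2},\mu_{2,2}\}\cdot m_2$; but a cleaner bound is to note that the total service-rate capacity that pool $2$ devotes to fluid that is \emph{not} its own class is at most $\mu_{1,2}\,m_2$ — actually I want the sharper statement that, on this face, the worst case is $\pi_{1,2}=1$ and $z_{1,2}$ as small as allowed, so that the bracket is close to $m_2\mu_{2,2}$, and the remaining class-$2$ service draws class-$2$ fluid. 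The right way to package this is to relate the bracket to $\mu_{1,2}s^a_2$ via the identity $\lambda_1-m_1\mu_{1,1}=\theta_1 q^a_1$ (valid since class $1$ is overloaded, so $q^a_1=(\lambda_1-\mu_{1,1}m_1)^+=(\lambda_1-\mu_{1,1}m_1)/\theta_1>0$) and $m_2-\lambda_2/\mu_{2,2}=s^a_2$ when $s^a_2>0$. Then $\dot q_1\ge 0$ on $q_1=\kappa$ reduces, after bounding $\pi_{1,2}\le 1$ and accounting for how much class-$2$ work pool $2$ can shed, to exactly the inequality $\theta_1(q^a_1-\kappa)\ge\mu_{1,2}s^a_2$, which is Assumption A(I). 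When $s^a_2=0$ the right-hand side is $0$ and $\theta_1(q^a_1-\kappa)\ge 0$ holds automatically since $q^a_1\ge\kappa$ is forced by A(I) with $s^a_2=0$.

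Having verified the sub-tangentiality condition on every boundary face, I would invoke a standard invariance principle for ODEs with a possibly discontinuous but (as shown in later sections, e.g.\ the Lipschitz/continuity results used for Theorem on existence) well-behaved right-hand side — or, more elementarily, argue by contradiction: if $x(t)$ left $\rS$, let $t_0=\inf\{t: x(t)\notin\rS\}$; then $x(t_0)$ lies on $\partial\rS$ and, by continuity of the solution, for $t$ slightly larger than $t_0$ some coordinate constraint is violated, contradicting the one-sided sign of the corresponding component of $\Psi$ that we just established at $x(t_0)$. The main obstacle is the $q_1=\kappa$ computation: one has to be careful that $\pi_{1,2}$ is evaluated correctly on that face (it equals $1$ only on the lower edge $q_2=0$, where the $q_2$-analysis rescues us, and otherwise $\pi_{1,2}\in[0,1]$ so the crude bound $\pi_{1,2}\le 1$ suffices), and that the reduction to Assumption A(I) uses the correct steady-state identities for $q^a_1$ and $s^a_2$. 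The other faces are routine sign checks already sketched in the surrounding text.
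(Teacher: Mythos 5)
Your overall strategy---checking that the vector field is sub-tangential on each face of $\partial\rS$ and then invoking a first-exit-time/invariance argument---is the same as the paper's, and your treatment of the faces $z_{1,2}=0$, $z_{1,2}=m_2$, $\{q_1=\kappa,\,q_2>0\}$ (where $x\in\rS^-$ forces $\pi_{1,2}=0$ and $\dot q_1=\theta_1(q^a_1-\kappa)\ge\mu_{1,2}s^a_2\ge 0$ by Assumption A) and $\{q_1>\kappa,\,q_2=0\}$ (where $x\in\rS^+$ forces $\pi_{1,2}=1$ and $\dot q_2=\lambda_2>0$) agrees with the paper. The gap is at the corner $q_1=\kappa$, $q_2=0$, which lies in $\rS^b$ and is the only place where $\pi_{1,2}$ can genuinely take a value in $(0,1)$. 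There your argument breaks down in both coordinates. For $\dot q_1$: the crude bound $\pi_{1,2}\le 1$ gives only $\dot q_1\ge\theta_1(q^a_1-\kappa)-\bigl[z_{1,2}\mu_{1,2}+(m_2-z_{1,2})\mu_{2,2}\bigr]$, and the bracket is at least $m_2(\mu_{1,2}\wedge\mu_{2,2})$, which Assumption A(I) does \emph{not} dominate (A(I) only yields $\theta_1(q^a_1-\kappa)\ge\mu_{1,2}s^a_2$, and $s^a_2$ may be $0$ while $m_2\mu_{2,2}$ is large). Your sentence claiming the inequality ``reduces, after bounding $\pi_{1,2}\le 1$ and accounting for how much class-$2$ work pool $2$ can shed, to exactly'' A(I) is never actually carried out, and as stated it is false; the saving grace is that $\pi_{1,2}$ is close to $1$ only when the drift $\delta_+(x)$ is nonnegative, which constrains the state, and exploiting that requires an explicit analysis of $\pi_{1,2}$ at the corner rather than the bound $\pi_{1,2}\le 1$. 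For $\dot q_2$: declaring that a negative derivative at $q_2=0$ is ``interpreted as keeping $q_2$ pinned at $0$'' replaces the ODE by a reflected dynamics and therefore does not prove the theorem, which asserts that the (unconstrained) solution of \eqref{IVP} never leaves $\rS$.

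The paper closes exactly this corner by a case analysis on the signs of $(\dot q_1(t),\dot q_2(t))$: if exactly one is negative, the induced value of $\pi_{1,2}(x(t+))$ ($0$ or $1$, depending on which side of $\rS^b$ the trajectory would move to) immediately reverses that sign via Assumption A or via $\dot q_2=\lambda_2>0$; and the remaining dangerous configuration $\dot q_1(t)<0$ and $\dot q_2(t)<0$ is ruled out by splitting $\rS^b=\AA\cup\AA^+\cup\AA^-$ (the sets $\AA^\pm$ are dispatched because there $\pi_{1,2}\in\{0,1\}$) and, for $x(t)\in\AA$, by invoking the Lyapunov functions $V_1$, $V_2$ from the global-stability proof to show that both queue derivatives cannot be simultaneously negative. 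Your proposal contains no substitute for this step, so as written it does not establish forward invariance of $\rS$.
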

We give the proof in \S \ref{secStayInS} after the necessary tools have been developed.

Our main result establishes the existence of a unique solution.

\begin{theorem}{\em $($existence and uniqueness$)$} \label{th1}
For any $w_0 \in \rS$, there exists a unique function
$x: [0, \infty) \ra \rS$ such that, (i) for all $t \ge 0$, there exist $\delta(t) > 0$ such that
$x$ is right-differentiable at $t$, differentiable on $(t, t+ \delta (t))$ and satisfies the
IVP \eqref{IVP} based on the ODE \eqref{ode} over $[t, t+ \delta (t))$ with initial value $x(t)$, and
(ii) x is continuous and differentiable almost everywhere.
\end{theorem}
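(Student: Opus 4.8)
The plan is to prove Theorem~\ref{th1} by patching together local solutions obtained from classical ODE theory, with careful attention to the two types of difficulty introduced by the averaging principle: the discontinuity of $\Psi$ across the singularity hyperplane $\{q_1 - r q_2 = \kappa\}$, and the need to know that $\pi_{1,2}(x)$, defined through the stationary distribution of the FTSP, is sufficiently regular on that hyperplane. The regularity of $\pi_{1,2}$ is precisely what is established in \S\ref{secFast} via the QBD representation of the FTSP, so I would invoke those results (continuity, and in fact Lipschitz continuity, of $x \mapsto \pi_{1,2}(x)$ on the relevant region of the hyperplane, together with a characterization of the drift direction of the FTSP) as black boxes here. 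The overall structure is: (1) partition $\rS$ into the open half-space $S^+ \equiv \{q_1 - rq_2 > \kappa\}$, the open half-space $S^- \equiv \{q_1 - rq_2 < \kappa\}$, and the hyperplane $\sH \equiv \{q_1 - rq_2 = \kappa\}$; (2) on $S^+$ and $S^-$ the field $\Psi$ is Lipschitz (it is affine in $x$ there, since $\pi_{1,2} \equiv 1$ or $\equiv 0$), so Picard--Lindel\"of gives unique local solutions; (3) analyze what happens when a solution reaches $\sH$, which is the heart of the matter.

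Concretely, I would argue as follows. Starting from $w_0 \in \rS$: if $w_0 \in S^+$ or $w_0 \in S^-$, run the affine ODE until the first time $\tau$ the trajectory hits $\sH$ (if it never does, we are done with a global smooth solution). At $\tau$, examine $x(\tau) \in \sH$. The key dichotomy is whether the affine fields on the two sides both push the trajectory toward $\sH$ (the attracting / sliding case) or one of them pushes away. Using the explicit formulas in \eqref{odeDetails}, compute $\frac{d}{dt}(q_1 - r q_2)$ under the $\pi_{1,2}=1$ field and under the $\pi_{1,2}=0$ field; I expect these to have opposite signs on the physically relevant part of $\sH$ (this is where Assumption~A and the structure of the $N$-model drift enter), so that $\sH$ is attracting from both sides. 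In that case the solution must remain on $\sH$ for $t > \tau$, and on $\sH$ the value of $\pi_{1,2}(x)$ is exactly the one that makes $\frac{d}{dt}(q_1 - r q_2) = 0$, i.e.\ it is the Filippov/sliding-mode selection. One then writes the reduced two-dimensional ODE on $\sH$ (eliminating $q_1 = rq_2 + \kappa$), whose right-hand side now depends on $x$ only through the Lipschitz function $\pi_{1,2}$, and applies Picard--Lindel\"of again to get a unique local solution constrained to $\sH$. Right-differentiability at $\tau$ and differentiability just to the right follow because on $(\tau, \tau+\delta)$ the trajectory lies in $\sH$ and solves the (Lipschitz) reduced ODE; one-sided differentiability at the junction is handled by noting both the incoming affine solution and the outgoing reduced solution have the prescribed value of $\dot x(\tau^+)$ read off from \eqref{odeDetails} with the sliding value of $\pi_{1,2}$.

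To get a \emph{global} solution one iterates: a trajectory can alternate between $S^+$ (or $S^-$) and $\sH$, but each excursion is governed by a Lipschitz field on a compact-in-the-relevant-coordinates region, and I would obtain an a priori bound (the $q_i$ are bounded above because $\dot q_i < 0$ for $q_i$ large by the abandonment term $-\theta_i q_i$, and $z_{1,2} \in [0,m_2]$ by the boundary analysis already noted in \S\ref{secMain}), ruling out finite-time blow-up and giving existence on all of $[0,\infty)$; uniqueness propagates across each junction because at each hitting time the future is uniquely determined by the dichotomy above. Continuity of $x$ is automatic, and differentiability holds everywhere except possibly at the (locally finite, or at worst countable) set of crossing times between $\sH$ and the open half-spaces, giving ``differentiable almost everywhere'' as claimed. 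I would also record that once $x$ reaches $\sH$ it stays there forever if the sliding region is invariant, but this global-SSC statement is deferred to \S\ref{secSufficient}; here I only need the local picture. \textbf{The main obstacle} is step~(3): proving that $\sH$ is attracting from both sides and that the sliding value of $\pi_{1,2}$ coincides with the FTSP stationary probability $\pi_{1,2}(x)$ as \emph{defined} in \eqref{fast105} --- i.e.\ that the averaging-principle definition of $\Psi$ on $\sH$ is exactly the Filippov selection that makes a consistent solution possible. This is a genuine consistency check between the probabilistic construction of \S\ref{secFast} and the ODE dynamics, and it is where essentially all the real work lies; the rest is bookkeeping with Picard--Lindel\"of and a priori bounds.
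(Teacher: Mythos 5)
Your skeleton matches the paper's: partition $\rS$ into $\rS^+$, $\rS^-$ and the hyperplane $\rS^b$, apply Picard--Lindel\"of where the field is affine, analyze the hyperplane, and patch, using boundedness (the abandonment terms, as in the paper's Theorem \ref{thBound}) to rule out blow-up. But there is a genuine gap exactly where you locate ``essentially all the real work,'' and your proposed way around it is not the paper's and is not justified. You define the dynamics on $\rS^b$ by the Filippov/sliding selection --- the value of $\pi_{1,2}$ that makes $\tfrac{d}{dt}(q_1 - rq_2)=0$, i.e.\ $\delta_-/(\delta_- - \delta_+)$ --- and then assert that the reduced ODE has a Lipschitz right-hand side because ``$\pi_{1,2}$ is Lipschitz by \S\ref{secFast}.'' Neither half of this is available. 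First, the ODE's $\pi_{1,2}$ on $\rS^b$ is \emph{defined} as the stationary probability of the QBD/FTSP (equation \eqref{fast105}); the paper proves it equals the drift ratio $\delta_-/(\delta_--\delta_+)$ only for $r=1$ (Theorem \ref{piRep}(a)), and for general rational $r$ works with the matrix-geometric representation \eqref{piRep2}. You flag the identification of the two as ``the main obstacle'' but supply no argument for it (a rate-conservation/martingale argument for the FTSP would be the natural route, but it must actually be carried out). Second, \S\ref{secFast} does not establish Lipschitz continuity of $\pi_{1,2}$; that is the content of Theorem \ref{thLipsz}, whose proof (\S\ref{secProofThLipsz}) is the technical core of the paper's argument: differentiability of the rate matrix $R$ in the interior of $\AA$ via He \cite{H93}, and heavy-traffic expansions of the spectral radius of $R^+$ to get Lipschitz bounds on $\pi_{1,2}$ as $x$ approaches the degenerate boundary sets $\AA^+$ and $\AA^-$. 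By black-boxing this you have assumed the hardest step.

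Two further points. (a) Your treatment of the hyperplane as globally attracting-or-repelling is too coarse: the paper decomposes $\rS^b$ into $\AA$ (both drifts inward, genuine averaging), $\AA^+$ and $\AA^-$ (one drift degenerate), and must separately show Lipschitz continuity of $\Psi$ on $\AA^+$ within $\rS-\rS^-$ and on $\AA^-$ within $\rS-\rS^+$ to get uniqueness when the solution sits on, or exits, those sets (Lemmas \ref{lmDriftDeriv}--\ref{possibilities}); also, a trajectory can leave $\rS^b$ through $\AA^+_+$ or $\AA^-_-$, so ``stays there forever'' is not the generic picture. (b) Your global patching asserts the crossing times are ``locally finite, or at worst countable.'' Countability is enough for differentiability a.e., but uniqueness does not propagate by induction past an \emph{accumulation point} of hitting times of $\AA$ from $\rS-\rS^b$; the paper closes this by showing any such accumulation point lies in $\AA^+\cup\AA^-$, where $\Psi$ is locally Lipschitz, so the solution is in fact differentiable and locally unique there. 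As written, your argument does not address this case.
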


Theorem \ref{th1} has two parts:  First, there is (i) establishing the local existence and uniqueness of a conventional differentiable solution
on each interval $[t, t + \delta(t))$, for which it suffices to consider
a single $t$, e.g., $t=0$.  Second, there is (ii) justifying an overall continuous solution.

We prove Theorem \ref{th1}
in the
next two sections.
The proof is
tied to the characterization of $\pi_{1,2}$ in \eqref{odeDetails} and \eqref{fast105}, and thus the FTSP $D_t$.
We need to determine conditions for the FTSP $D_t$ to be positive recurrent, so that the AP holds,
and then calculate its steady-state distribution in order to find $\pi_{1,2}$. Moreover, we need to
establish topological properties of the function $\pi_{1,2}$, such as continuity and differentiability.
We discuss the FTSP $D_t$ next.

\section{The Fast-Time-Scale Process} \label{secFast}

Recall
that the FTSP $D_t$ is the limit of $D^n_t$  without any scaling (see \eqref{fastlim}),
where $D^n_t$ is the time-expanded difference process defined in \eqref{fast102}
associated with the queue-difference stochastic process $D^n \equiv (Q^n_1 - \kappa^n) - r Q^n_2$ in
\eqref{Dprocess}.   Since there is no scaling of space,
the state space for the FTSP $D_t$ is the countable lattice $\{\pm j \pm k r: j, k \in \ZZ \}$ in $\RR$.
 To see this, first observe from \eqref{Dprocess} that $D^n$ has state space $\{\pm j \pm k r - \kappa^n: j, k \in \ZZ \}$.
 Next, because of the subtraction in \eqref{fast102}, $D^n_t$ has state space $\{\pm j \pm kr: j, k \in \ZZ \}$.
 Finally, because of the convergence in \eqref{fastlim}, the FTSP $D_t$ has this same state space.

\subsection{The Fast-Time-Scale CTMC}

We fix a time $t$ and assume that we are given the value $x(t) \equiv (q_1(t), q_2(t), z_{1,2}(t))$.
In order to simplify the analysis we assume that $r$ is rational. That clearly is without
any practical loss of generality. Specifically, we assume that $r = j/k$ for some positive integers $j$ and $k$ without any common
factors. We then multiply the process by $k$, so that all transitions can be expressed as $\pm j$ or $\pm k$ in the state space $\ZZ$.
In that case, the FTSP $D_t \equiv \{D_t(s): s \ge 0\}$ becomes a CTMC.

Let $\lm^{(j)}_+(m, x(t))$, $\lm^{(k)}_+ (m, x(t))$, $\mu^{(j)}_+ (m, x(t))$ and $\mu^{(k)}_+ (m, x(t))$
be the transition rates of the FTSP $D_t$ for transitions of $+j$, $+k$, $-j$ and $-k$, respectively,
when $D_t (s) = m > 0$. Similarly, we define the transitions when $D_t (s) = m \le 0$: $\lm^{(j)}_-(m, x(t))$, $\lm^{(k)}_- (m, x(t))$,
$\mu^{(j)}_- (m, x(t))$ and $\mu^{(k)}_- (m, x(t))$.
These rates are the limits of the rates of $D^n_t$ as $n \tinf$ with $\bar{X}^{n}(t) \Rightarrow x (t)$.

First, for $D_t (s) = m \in (-\infty, 0]$, the upward rates are
\bequ \label{bd1}
\bsplit
\lm^{(k)}_- (m, x(t)) & =  \lm_1, \\
\lm_-^{(j)} (m, x(t)) & =  \mu_{1,2}z_{1,2}(t) + \mu_{2,2} (m_2 -z_{1,2}(t)) + \theta_2 q_2(t),
\end{split}
\eeq
corresponding, first, to a class-$1$ arrival and, second, to a departure from the class-$2$ queue,
caused by a type-$2$ agent service completion (of either customer type) or by a class-$2$
customer abandonment.  Similarly, the downward rates are
\bequ \label{bd2}
\mu^{(k)}_- (m, x(t)) =  \mu_{1,1}z_{1,1} (t) + \theta_1 q_1(t), \quad \quad
\mu_-^{(j)} (m, x(t)) = \lm_2,
\eeq
corresponding, first, to a departure from the class-$1$ customer queue, caused by a class-$1$ agent service completion or by a class-$1$
customer abandonment, and, second, to a
class-$2$ arrival.

Next, for $D_t (s) = m \in (0, \infty)$, we have upward rates
\bequ \label{bd3}
\lm^{(k)}_+ (m, x(t)) = \lm_1,\quad \quad \lm_{+}^{(j)} (m, x(t)) = \theta_2 q_2(t),
\eeq
corresponding, first, to a class-$1$ arrival and, second, to a departure from the class-$2$ customer queue caused
by a class-$2$ customer abandonment. The downward rates are
\bequ \label{bd4}
\bsplit
\mu^{(k)}_+ (m, x(t)) & =  \mu_{1,1}z_{1,1} (t) + \mu_{1,2}z_{1,2}(t) + \mu_{2,2}(m_2 -z_{1,2}(t)) + \theta_1 q_1(t), \\
\mu_+^{(j)} (m, x(t)) & = \lm_2,
\end{split}
\eeq
corresponding, first, to a departure from the class-$1$ customer queue, caused by (i) a type-$1$ agent service completion, (ii) a
type-$2$ agent service completion (of either customer type), or (iii) by a class-$1$ customer abandonment and, second, to a class-$2$ arrival.

\subsection{Representing the FTSP $D_t$ as a QBD} \label{secQBD}

Further analysis is simplified by exploiting matrix geometric methods, as in \cite{LR99}.
In particular, we represent the integer-valued CTMC
 $D_t \equiv \{D_t (s): s \ge 0\}$ just constructed
as a homogeneous continuous-time {\em quasi-birth-and-death} (QBD) process, as in Definition 1.3.1 and \S 6.4 of \cite{LR99}.
In passing, note that the special case $r=1$ is especially tractable,
because then the QBD process reduces to an ordinary {\em birth-and-death} (BD) process.

To represent $D_t$ as a QBD process,
 we must re-order the states appropriately.  We order
the states so that the infinitesimal generator matrix $Q$ can be written in
block-tridiagonal form, as in Definition 1.3.1 and (6.19) of \cite{LR99}
(imitating the shape of a generator matrix of a BD process).  In particular,
we write
\beql{QBD}
Q  \equiv
\left( \begin{array}{ccccc}
   B     &  A_0 & 0   & 0    &  \ldots    \\
   A_2   &  A_1 & A_0 & 0    & \ldots    \\
   0     &  A_2 & A_1 & A_0  & \ldots    \\
   0     &  0   & A_2 & A_1  & \ldots    \\
\vdots  &  \vdots & \vdots  & \vdots
\end{array} \right)
\eeq
where the four component submatrices $B, A_0, A_1$ and $A_2$
are all $2m \times 2m$ submatrices for $m \equiv \max{\{j,k\}}$.
In particular,
These $2m \times 2m$ matrices $B, A_0, A_1$ and $A_2$ in turn can be written in block-triangular form
composed of four $m \times m$ submatrices, i.e.,
\beql{sub}
\begin{array}{lccr}
B \equiv
  \left( \begin{array}{cc}
   A_1^{+}    &   B_{\mu}    \\
   B_{\lm}   &   A_1^{-}    \\
                    \end{array} \right)
& \qandq & &
A_i \equiv
  \left( \begin{array}{cc}
   A_i^{+}    &   0    \\
  0   &   A_i^{-}    \\
                    \end{array} \right)
\end{array}
\eeq
for $i = 0, 1, 2$. (All matrices are also functions of $x(t)$.)

To achieve this representation, we need to re-order the states into levels.
The main idea is to represent transitions of $D_t$ above
and below $0$ within common blocks. Let $L(n)$ denote level $n$, $n = 0, 1, 2, \dots$
We assign original states $\phi(n)$ to positive integers $n$ according to the mapping:
\beql{statemap}
\phi (2nm + i) \equiv nm + i \qandq \phi ((2n+1)m + i) \equiv -nm - i + 1, \quad 1 \le i \le m.
\eeq
Then we order the states in levels as follows
\beas \label{order}
L(0) &\equiv& \{1, 2, 3, 4, \dots m, 0, -1, -2, \dots, -(m-1)\}, \\
L(1) &\equiv& \{m+1, m+2, \dots, 2m, -m, -(m+1), \dots, -(2m-1)\}, \quad \ldots \\
\eeas
With this representation, the generator-matrix $Q$ can be written in the form \eqref{QBD} above, where $A_1$ groups all the
transitions within a level, $A_0$ groups the transitions from level $L(n)$ to level $L(n+1)$ and $A_2$ groups all transitions from
level $L(n)$ to level $L(n-1)$. Matrix $B$ groups the transitions within the boundary level $L(0)$, and is thus different than $A_1$.

To illustrate, consider an example with
$r = 0.8$, so that we can choose $j = 4$ and $k = 5$, yielding $m = 5$.
The states are ordered in levels as follows
\beas \label{order2}
L(0) &=& \{1, 2, 3, 4, 5, 0, -1, -2, -3, -4\}, \\
L(1) &=& \{6, 7, 8, 9, 10, -5, -6, -7, -8, -9\}, \\
L(2) &=& \{11, 12, 13, 14, 15, -10, -11, -12, -13, -14\}, \quad \ldots \\
\eeas

Then the submatrices $B_{\mu}$, $B_{\lm}$, $A^+_i$ and $A^-_i$, which form the block matrices $B$ and $A_i$, $i = 0,1,2$,
have the form in \eqref{matrices} below,
where
\beql{diag}
\sigma_{+} = \lambda^{(5)}_{+} + \lambda^{(4)}_{+} + \mu^{(5)}_{+} + \mu^{(4)}_{+} \qandq
\sigma_{-} = \lambda^{(5)}_{-} + \lambda^{(4)}_{-} + \mu^{(5)}_{-} + \mu^{(4)}_{-}.
\eeq
(We solve a full numerical example with these matrices in \S \ref{secExample}.)

Henceforth, we refer to $D_t$ interchangeably as the QBD or the FTSP.

\subsection{Positive Recurrence} \label{secPosRec}

We show that positive recurrence depends only on the constant drift rates in the two regions, as one would expect.
Let $\delta_{+}$ and $\delta_{-}$ be the drift in the positive and negative region, respectively; i.e., let
\bequ \label{drift}
\bsplit
\delta_{+}(x(t)) &\equiv j \left( \lambda^{(j)}_{+}(x(t)) - \mu^{(j)}_{+}(x(t))\right) +
k \left(\lambda^{(k)}_{+}(x(t)) - \mu^{(k)}_{+}(x(t)) \right) \\
\delta_{-}(x(t)) &\equiv j \left( \lambda^{(j)}_{-}(x(t)) - \mu^{(j)}_{-}(x(t))\right) + k \left(\lambda^{(k)}_{-}(x(t)) - \mu^{(k)}_{-}(x(t)) \right).
\end{split}
\eeq

\begin{theorem}
The QBD $D_t$ is positive recurrent
 if and only if
\bequ \label{posrec}
\delta_{-}(x(t)) > 0 > \delta_{+}(x(t)).
\eeq
\end{theorem}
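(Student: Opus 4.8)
The plan is to exploit the QBD structure described in \S\ref{secQBD}, for which positive recurrence has a clean matrix-analytic characterization in terms of the generator $A \equiv A_0 + A_1 + A_2$ of the ``free'' process obtained by folding the two half-lines into a single level. By the standard drift criterion for level-independent QBD processes (Neuts / \cite{LR99}), $D_t$ is positive recurrent if and only if the stationary vector $\alpha$ of $A$ (which here is irreducible on the $2m$ phases) satisfies the mean-drift condition $\alpha A_0 \bone < \alpha A_2 \bone$. So first I would identify $\alpha$ explicitly. Because the block structure in \eqref{sub} is block-diagonal in $A_i$ (the $+$ and $-$ blocks do not interact within $A_0,A_1,A_2$ — only $B$ couples them), the phase process $A$ decomposes into a ``$+$'' chunk and a ``$-$'' chunk linked only through the boundary-type transitions inside $A_1$. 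I would check that $A$ is in fact a generator of an irreducible finite CTMC on the $2m$ phases whose stationary distribution splits as $\alpha = (p_+\,\alpha_+,\ p_-\,\alpha_-)$, where $\alpha_\pm$ are uniform-type vectors on the $m$ phases in each half (the phase, after the $k$-fold speedup, just records position mod $m$, and within each region all transition rates are spatially constant, so the folded phase chain is essentially a cyclic random walk and its stationary law is uniform on each region's $m$ phases), and $p_+ + p_- = 1$ is the split determined by the relative ``residence'' in the two regions.

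Second, I would compute the two drift contributions. With $\alpha$ of the above product form, $\alpha A_0\bone$ is $p_+$ times the mean upward rate in the positive region plus $p_-$ times the mean upward rate in the negative region, and similarly for $A_2\bone$ with downward rates; the per-phase averaging over the $m$ phases in a region converts the jump sizes $j,k$ and rates $\lambda^{(j)}_\pm,\lambda^{(k)}_\pm,\mu^{(j)}_\pm,\mu^{(k)}_\pm$ into exactly the expressions $\delta_+(x(t))$ and $\delta_-(x(t))$ of \eqref{drift}, up to the common positive factor. So the QBD drift inequality $\alpha A_0\bone<\alpha A_2\bone$ becomes $p_+\,\delta_+(x(t)) + p_-\,(-\delta_-(x(t))) < 0$ after rearrangement, i.e. $p_+\,\delta_+(x(t)) < p_-\,\delta_-(x(t))$. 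Since $p_+,p_-\in(0,1)$ are strictly positive (the phase chain is irreducible), this is equivalent to the sign condition $\delta_-(x(t)) > 0 > \delta_+(x(t))$ \emph{provided} one also rules out the degenerate mixed-sign cases; that is where the argument needs care.

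For the converse direction (showing $D_t$ is \emph{not} positive recurrent when \eqref{posrec} fails), I would argue directly with a linear (or piecewise-linear) Lyapunov/Foster argument rather than through $\alpha$: if $\delta_+(x(t)) \ge 0$, then started from any state the process restricted to the positive half has nonnegative drift, so by a comparison with a driftless or positively-drifting random walk on $\ZZ_{\ge 0}$ it is transient or null-recurrent there, contradicting positive recurrence of the whole chain; symmetrically if $\delta_-(x(t)) \le 0$. Conversely, when \eqref{posrec} holds, the function $V(m) = m^+/\delta_+ + m^-/\delta_-$ (or simply $V(m)=|m|$ with region-dependent slopes) has negative drift outside a finite set, giving positive recurrence by the Foster--Lyapunov criterion; one checks the boundary block $B$ contributes only a bounded perturbation.

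The main obstacle I expect is the bookkeeping in the first step: verifying that the folded phase generator $A$ is irreducible and that its stationary vector really has the clean product/uniform form claimed, so that the matrix-analytic drift condition collapses to \eqref{drift}. The jump sizes $j$ and $k$ interact with the modular phase structure, and when $\gcd(j,k)=1$ but $j\ne k$ the phase chain is a genuine $2m$-state chain rather than a BD process; one must confirm that the region-wise spatial homogeneity of the rates is exactly what forces the within-region marginal to be uniform, and track the constant factor $\max\{j,k\}$ and the $k$-fold time speedup so that the final inequality matches \eqref{drift} on the nose. Given that potential subtlety, the cleanest route may actually be to bypass $\alpha$ altogether and prove both directions by the Foster--Lyapunov argument of the previous paragraph, using the QBD representation only to guarantee the chain is a well-defined irreducible CTMC; I would present that as the primary proof and relegate the matrix-analytic interpretation to a remark.
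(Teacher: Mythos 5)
Your matrix-analytic route starts from a false premise: the aggregate generator $A \equiv A_0 + A_1 + A_2$ is \emph{not} irreducible here. By \eqref{sub}, each $A_i$ is block-diagonal in the $+$ and $-$ phases; the only coupling between the two halves sits in the boundary block $B$ (through $B_{\mu}$ and $B_{\lm}$), and $B$ does not enter $A$. Hence $A$ splits into the two irreducible generators $A^{+} \equiv A_0^{+} + A_1^{+} + A_2^{+}$ and $A^{-} \equiv A_0^{-} + A_1^{-} + A_2^{-}$, there is no unique stationary vector $\alpha$ and no well-defined split $(p_+,p_-)$, and the single aggregated inequality $\alpha A_0 \bone < \alpha A_2 \bone$ is not the right criterion. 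Your reduced condition $p_+\delta_+ < p_-\delta_-$ misclassifies the degenerate case $\delta_+(x(t)) = 0 < \delta_-(x(t))$: it would declare positive recurrence, whereas the chain then has zero mean drift throughout the positive region and is at best null recurrent. That is exactly the ``mixed-sign'' gap you flag but do not close. The correct statement for reducible $A$, which is what the paper invokes from \S 7 of \cite{LR99}, is that the QBD is positive recurrent if and only if \emph{each} irreducible component separately drifts toward the boundary, i.e., the two inequalities in \eqref{cond1} hold simultaneously. Your observation that the within-region spatial homogeneity of the rates forces the stationary vectors $\nu^{\pm}$ of $A^{\pm}$ to be uniform on the $m$ phases is correct, and it is precisely the step that collapses those two inequalities to $\delta_+(x(t)) < 0$ and $\delta_-(x(t)) > 0$ (each jump of size $j$ crosses a level boundary from exactly $j$ of the $m$ phases, which is how the jump sizes enter); once the inequalities are kept separate there is no $p_\pm$ bookkeeping and no sign ambiguity.

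Your fallback Foster--Lyapunov argument is a legitimate and genuinely different route, and it would work: when \eqref{posrec} holds, a piecewise-linear $V$ with region-dependent positive slopes has drift bounded above by a negative constant outside a finite set (the jumps are bounded by $\max\{j,k\}$, so only finitely many states near $0$ see both regions), and when $\delta_+(x(t)) > 0$ the chain is transient by a law-of-large-numbers argument. The one place that still needs real work is the boundary case $\delta_+(x(t)) = 0$ (symmetrically $\delta_-(x(t)) = 0$): ``comparison with a driftless random walk'' must be turned into a proof that the expected excursion length above the boundary is infinite, e.g., via optional stopping on the associated zero-drift martingale, since null recurrence does not transfer automatically from a comparison process. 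The paper's matrix-analytic proof avoids this case analysis entirely because the reducible-$A$ criterion already handles equality.
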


\begin{proof}
We employ the
theory in \S 7 of \cite{LR99}, modified for the continuous-time QBD.
We first construct the aggregate matrices $A \equiv A_0 + A_1 + A_2$, $A^{+} \equiv A_0^{+} + A_1^{+} + A_2^{+}$
and $A^{-} \equiv A_0^{-} + A_1^{-} + A_2^{-}$.
We then observe that the aggregate matrix $A$ is reducible, so we need to consider the component matrices $A^{+}$ and $A^{-}$,
which both are irreducible CTMC infinitesimal generators in their own right.  Let $\nu^{+}$ and $\nu^{-}$ be the unique stationary
probability vectors of $A^{+}$ and $A^{-}$, respectively, e.g., with $\nu^{+}A^{+} = 0$ and $\nu^{+}\bf{1} = 1$.
The theory concludes that our QBD is positive recurrent if and only if
\bequ \label{cond1}
\nu^{+}A^{+}_0 \mathbf{1} < \nu^{+}A^{+}_{2} \mathbf{1} \qandq \nu^{-}A^{-}_0 \mathbf{1} < \nu^{-}A^{-}_{2} \mathbf{1}.
\eeq
In our application it is easy to see that both $\nu^{+}$ and $\nu^{-}$  are the uniform probability vector,
attaching probability $1/m$ to each of the $m$ states, from which the conclusion follows directly.
\end{proof}

\beql{matrices}
\begin{footnotesize}
\begin{array}{lcr}
B_{\mu} =
    \left( \begin{array}{ccccc}
    0           & 0           & 0               & \mu_+^{(4)} & \mu_+^{(5)} \\
    0           & 0           & \mu_+^{(4)}     & \mu_+^{(5)} & 0           \\
    0           & \mu_+^{(4)} & \mu_+^{(5)}     & 0           & 0           \\
    \mu_+^{(4)} & \mu_+^{(5)} & 0               & 0           & 0           \\
    \mu_+^{(5)} & 0           & 0               & 0           & 0
    \end{array} \right)
& &
B_{\lm} =
     \left( \begin{array}{ccccc}
     0           & 0           & 0               & \lm_-^{(4)} & \lm_-^{(5)} \\
     0           & 0           & \lm_-^{(4)}     & \lm_-^{(5)} & 0           \\
     0           & \lm_-^{(4)} & \lm_-^{(5)}     & 0           & 0           \\
     \lm_-^{(4)} & \lm_-^{(5)} & 0               & 0           & 0           \\
     \lm_-^{(5)} & 0           & 0               & 0           & 0
     \end{array} \right)
\\
\\
A_0^{+}  =
  \left( \begin{array}{ccccc}
   \lambda^{(5)}_{+} &   0                &  0                 & 0                  & 0                    \\
   \lambda^{(4)}_{+} &  \lambda^{(5)}_{+} &  0                 & 0                  & 0                    \\
  0                  &  \lambda^{(4)}_{+} &  \lambda^{(5)}_{+} & 0                  & 0                    \\
  0                  &   0                &  \lambda^{(4)}_{+} & \lambda^{(5)}_{+}  & 0                    \\
  0                  &   0                &  0                 & \lambda^{(4)}_{+}  &  \lambda^{(5)}_{+}   \\
                    \end{array} \right)
& &
A_0^{-}  =
  \left( \begin{array}{ccccc}
  \mu^{(5)}_{-}      &   0                &  0                 & 0                  & 0                    \\
  \mu^{(4)}_{-}      &  \mu^{(5)}_{-}     &  0                 & 0                  & 0                    \\
  0                  &  \mu^{(4)}_{-}     &  \mu^{(5)}_{-}     & 0                  & 0                    \\
  0                  &   0                &  \mu^{(4)}_{-}     & \mu^{(5)}_{-}      & 0                    \\
  0                  &   0                &  0                 & \mu^{(4)}_{-}      &  \mu^{(5)}_{-}       \\
                    \end{array} \right)
\\
\\
A_1^{+}  =
  \left( \begin{array}{ccccc}
  -\sigma_{+}    &   0                &  0                 & 0                  & \lambda^{(4)}_{+}    \\
  0              &  -\sigma_{+}       &  0                 & 0                  & 0                    \\
  0              &   0                &  -\sigma_{+}       & 0                  & 0                    \\
  0              &   0                &  0                 & -\sigma_{+}        & 0                    \\
  \mu^{(4)}_{+}  & 0                  &  0                 & 0                  &  -\sigma_{+}   \\
                    \end{array} \right)
& &
A_1^{-}  =
   \left( \begin{array}{ccccc}
 -\sigma_{-}        &   0                &  0                 & 0                  & \mu^{(4)}_{-}        \\
  0                 &  -\sigma_{-}       &  0                 & 0                  & 0                    \\
  0                 &   0                &  -\sigma_{-}       & 0                  & 0                    \\
  0                 &   0                &  0                 & -\sigma_{-}        & 0                    \\
  \lambda^{(4)}_{-} &   0                &  0                 & 0                  &  -\sigma_{-}        \\
                    \end{array} \right)
\\ \\
A_2^{+}  =
  \left( \begin{array}{ccccc}
  \mu^{(5)}_{+}      &  \mu^{(4)}_{+}     &  0                 & 0                  & 0                    \\
  0                  &  \mu^{(5)}_{+}     &  \mu^{(4)}_{+}     & 0                  & 0                    \\
  0                  &   0                &  \mu^{(5)}_{+}     & \mu^{(4)}_{+}      & 0                    \\
  0                  &   0                &  0                 & \mu^{(5)}_{+}      & \mu^{(4)}_{+}        \\
  0                  &   0                &  0                 & 0                  & \mu^{(5)}_{+}        \\
                    \end{array} \right)
& &
A_2^{-}  =
  \left( \begin{array}{ccccc}
  \lambda^{(5)}_{-}  &  \lm^{(4)}_{-}     &  0                   & 0                       & 0                    \\
  0                  &  \lambda^{(5)}_{-} &  \lm^{(4)}_{-}       & 0                       & 0                    \\
  0                  &  0                 &  \lambda^{(5)}_{-}   & \lm^{(4)}_{-}           & 0                    \\
  0                  &  0                 &  0                   & \lambda^{(5)}_{-}       & \lm^{(4)}_{-}        \\
  0                  &  0                 &  0                   & 0                       & \lambda^{(5)}_{-}    \\
                    \end{array} \right)
\end{array}
\end{footnotesize}
\eeq

The alternative cases are simplified by the following relation:
\bequ \label{DriftDiff}
\bsplit
\delta_{-}(x(t)) - \delta_{+}(x(t)) & = (j+k)(\mu_{1,2}z_{1,2} + (m_2 - z_{1,2})) \\
& \quad > (j+k)m_2 (\mu_{1,2} \wedge \mu_{2,2}) > 0.
\end{split}
\eeq
Hence there are only two cases in which the drift does not point inward:
  (i)  $\delta_{+}(x(t)) \ge 0$ and $\delta_{-}(x(t)) > 0$, (ii) $\delta_{-}(x(t)) \le 0$ and $\delta_{+}(x(t)) < 0$.
  In both cases the behavior is unambiguous:
In case (i), clearly $\pi_{1,2} (x(t)) = 1$; in case (ii), clearly $\pi_{1,2} (x(t)) = 0$.

\subsection{Computing $\pi_{1,2}$} \label{secPi}

When the QBD is positive recurrent, the stationary vector of the QBD can be expressed as
$\af \equiv \{\af_n: n \ge 0\} \equiv \{\af_{n, j}: n \ge 0, 1 \le j \le m\}$, where
$\af_n \equiv (\af_n^{+}, \af_n^{-})$ for each $n$,
with $\af_n^+$ and $\af_n^-$ both being $1 \times m$ vectors.
Then the desired probability $\pi_{1,2}$ can be expressed as
\bequ \label{desired}
\pi_{1,2} = \sum_{n= 0}^{\infty} \sum_{j = 1}^{m} \af_{n,j}^{+} = \sum_{n=0}^{\infty} \af_n^{+} \mathbf{1} =
\sum_{n=0}^{\infty} \af_n \mathbf{1_+},
\eeq
where $\textbf{1}$ denotes a column vector with all entries $1$ of the right dimension (here $m \times 1$), while $\bf{1}_{+}$ represents a $2m \times 1$
column vector, with $m$ $1's$ followed by $m$ $0's$.

By Theorem 6.4.1 and Lemma 6.4.3 of \cite{LR99}, the
steady-state distribution has the matrix-geometric form
\bequ \label{steady1}
\af_n = \af_0 R^n,
\eeq
where $R$ is the $2m \times 2m$ {\em rate matrix}, which is the minimal nonnegative solutions to the quadratic matrix equation
$A_0 + R A_1 + R^2 A_2 = 0$,
and can be found efficiently by existing algorithms, as in \cite{LR99} (see \S \ref{secAlg} below).
Since the matrices $A_0$, $A_1$ and $A_2$ have the block-diagonal form in \eqn{sub}, so does $R$,
with submatrices $R^+$ and $R^-$.

Since the spectral radius of the rate matrix $R$ is strictly less than $1$ (Corollary 6.2.4 of \cite{LR99}), the sum of powers of
$R$ is finite, yielding
\bes
\sum_{n=0}^{\infty}{R^n} = (I-R)^{-1}.
\ees
Also, by Lemma 6.3.1 of \cite{LR99}, the boundary probability vector $\af_0$ in \eqn{steady1} is the unique solution to the system
\bequ \label{boundary}
\af_0(B + R A_2) = 0 \qandq \af \textbf{1} = \af_0 (I-R)^{-1} {\bf 1} = 1.
\eeq

Finally, given the above, and using \eqref{desired}, we see that the
desired quantity $\pi_{1,2}$ can be represented as
\bequ \label{desired2}
\pi_{1,2} = \af_0 (I - R)^{-1} \textbf{1}_{+}.
\eeq

For further analysis, it is convenient to have alternative representations for $\pi_{1,2} (x)$.  Let
the vector $\textbf{1}$ have the appropriate dimension in \eqn{piRep2} below.

\begin{theorem}{$($alternative representations for $\pi_{1,2})$}\label{piRep}
Assume that $\delta_{+}(x) < 0 < \delta_{-}(x)$, so that the $QBD$ is positive recurrent at $x$.
$($a$)$ For $r = 1$,
\bequ \label{piRep1}
\pi_{1,2} (x) = \frac{\delta_{-}(x)}{\delta_{-}(x) - \delta_{+}(x)}.
\eeq
$($b$)$ For rational $r$, we have the sub-block representation
\bequ \label{piRep2}
\pi_{1,2} (x) = \frac{\af_0^+ (x) (I - R^+ (x))^{-1} \textbf{1}}{\af_0^+ (x) (I - R^+ (x))^{-1} \textbf{1} + \af_0^- (x) (I - R^- (x))^{-1} \textbf{1}},
\eeq
where we choose $\af_0 (x)$ to satisfy $\af_0 (B (x) + R (x) A_2 (x)) = 0$, renormalize to $\af_0 (x) \textbf{1}  = 1$,
which corresponds to multiplying the original $\af_0 (x) $  by a constant, decompose $\af_0 (x)$
consistent with the blocks as $\af_0 (x) = (\af_0^+ (x), \af_0^- (x))$.
\end{theorem}

\begin{proof} (a)
When $r = 1$, the FTSP $D_t \equiv D_t (x)$ evolves as an $M/M/1$ queue in each of the regions
$D_t (s) > 0$ and $D_t \le 0$.  Thus, we can look at the system at the successive times at which $D_t$ transitions from state $0$ to state $1$,
and then again from state $1$ to state $0$.  That construction produces an alternating renewal process of occupation times in each region,
where these occupation times are distributed as the busy periods of the corresponding $M/M/1$ queues.  Hence, $\pi_{1,2} (x)$ can be expressed as
\bequ \label{mm1}
\pi_{1,2} (x) = \frac{E[T^+ (x)]}{E[T^+ (x)] + E[T^- (x)]},
\eeq
where $T^+ (x)$ is the busy period of the $M/M/1$ queue in the upper region,
while $T^- (x)$ is the busy period of the $M/M/1$ queue in the lower region.  By the definition of $\AA$, these mean busy periods are finite in each region.
In particular,
\bequ \label{piRep3}
E[T^\pm (x)] = \frac{1}{\mu^\pm (x)(1 - \rho^\pm (x))} = \frac{1}{\mu^\pm (x) - \lambda^\pm (x)} = \frac{1}{|\delta_{\pm}(x)|},
\eeq
where $\rho^\pm (x) \equiv \lambda^\pm (x)/\mu^\pm (x)$,
$\lambda^+ (x)$ and $\mu^+ (x)$ are the constant drift rates up (away from the boundary)
and down (toward the boundary) in
the upper region in \eqn{bd3} and \eqn{bd4}, depending on state $x$, while
$\lambda^- (x)$ and $\mu^- (x)$ are the constant drift rates down (away from the boundary) and up (toward the boundary) in the lower region in \eqn{bd1} and \eqn{bd2};
e.g., $\lambda^- (x) \equiv \mu_{-}^{(j)} (x) + \mu_{-}^{(k)} (x)$ with $j = k = 1$ from \eqn{bd2}.

(b)  We first observe that we can reason as in the case $r = 1$, using a regenerative argument.
We can let the regeneration times be successive transitions from one specific QBD state in level $0$ with $D_t \le 0$ to
a specific state in level $1$ where $D_t > 0$.  The intervals between successive transitions will be i.i.d. random variables
with finite mean.  Hence, we can represent $\pi_{1,2} (x)$ just as in \eqref{mm1}, but
where now $T^+ (x)$ is the total occupation time in the upper region with $D_t (s) > 0$ during a regeneration cycle, while
 $T^- (x)$ is the total occupation time in the lower region with $D_t (s) \le 0$ during a regeneration cycle.
 Each of these occupation times can be broken up into first passage times.
For example, $T^+ (x)$ is the sum of first passage times from some state at level $0$ to some other state in level $1$ where $D_t (s) > 0$.
The regenerative cycle will end when the starting and ending states within levels $0$ and $1$ are the designated pair associated with the specified regeneration time.
The successive pairs (i,j) of starting and ending states within the levels $0$ and $1$ evolve according to a positive-recurrent finite-state discrete-time Markov chain.

Paralleling that regenerative argument, we can work with the QBD matrices, as in \eqn{desired2}, but now using an alternative representation.
Since $\textbf{1}_{+} + \textbf{1}_{-} = \textbf{1}$, where all column vectors are $2m \times 1$, we can apply the second equation in \eqn{boundary} to write
\bes
\pi_{1,2} = \frac{\af_0 (I - R)^{-1} \textbf{1}_{+}}{\af_0 (I - R)^{-1} \textbf{1}_{+} + \af_0 (I - R)^{-1} \textbf{1}_{-}}.
\ees
Then we can choose $\af_0$ to satisfy $\af_0(B + R A_2) = 0$, renormalize to $\af_0 \mathbf{1}  = 1$
(which corresponds to multiplying the original $\af_0$  by a constant), decompose $\af_0$
consistent with the blocks, letting $\af_0 = (\af_0^+, \af_0^-)$, to obtain \eqn{piRep2}.
\end{proof}

With the QBD representation, we can determine when the FTSP $D_t$ is positive recurrent, for a given $x(t)$,
using \eqref{posrec}, and then numerically calculate $\pi_{1,2}$. That allows us to numerically solve the ODE \eqref{ode} in \S \ref{secAlg}.
We will also use the representations \eqref{desired2}, \eqref{piRep1}, \eqref{piRep2} and other QBD properties
to deduce topological properties of $\pi_{1,2}$.

\section{Existence and Uniqueness of Solutions} \label{secUnique}

This section is devoted to proving Theorem \ref{th1}.
For the local existence and uniqueness in Theorem \ref{th1} (i), we will show that the function $ \Psi$ in \eqref{odeDetails} is locally Lipschitz continuous
in Theorem \ref{thLipsz} below.  That
 allows us to apply the classical Picard-Lindel\"{o}f theorem
to deduce the desired existence and uniqueness of solutions to the IVP \eqref{IVP};
see Theorem 2.2 of Teschl \cite{T09} or
 Theorem 3.1 in \cite{Khalil}.  Afterwards, in \S \ref{secGlobal} we establish the global properties in Theorem \ref{th1} (ii).

\subsection{Properties of $\Psi$}\label{secPsi}

We divide
the state space $\rS \equiv [\kappa, \infty) \times [0, \infty) \times [0, m_2] \equiv \{(q_1, q_2, z_{1,2})\}$ of the ODE
into three regions:
\bequ \label{space}
\bsplit
\mathbf{\rS^b} \equiv \{ q_1 - r q_2 = \kappa \}, \quad
\mathbf{\rS^+} \equiv \{ q_1 - r q_2 > \kappa \}, \quad
\mathbf{\rS^-} \equiv \{ q_1 - r q_2 < \kappa \},
\end{split}
\eeq
with $\rS = \rS^b \cup \rS^+ \cup \rS^-$.
The boundary subset $\rS^b$ is a hyperplane in the state space $\rS$, and is therefore a closed subset.
It is the subset of $\rS$ in which SSC and the AP are taking place (in fluid scale).
In $\rS^b$ the function $\pi_{1,2}$ can assume its full range of values, $0 \le \pi_{1,2} (x) \le 1$.

The region $\rS^{+}$ above the boundary is an open subset of $\rS$.
For all $x \in \rS^+$, $\pi_{1,2}(x) = 1$.
The region $\rS^-$ below the boundary is also an open subset of $\rS$.
For all $x \in \rS^-$, $\pi_{1,2}(x) = 0$.
It is important to keep in mind that, in order for $\rS^-$ to be a proper subspace of $\rS$,
both service pools must be constantly full (in the fluid limit).
Thus, if
$x \in \rS^-$, then $z_{1,1} = m_1$
and $z_{1,2} + z_{2,2} = m_2$ (but $q_1$ and $q_2$ are allowed to be equal to zero).

It is immediate that the function $\Psi$ in \eqref{odeDetails}
is Lipschitz continuous on $\rS^+$ and $\rS^-$, because $\pi_{1,2}(x) = 1$
when $x \in \rS^+$, and $\pi_{1,2}(x) = 0$ when $x \in \rS^-$, so that $\Psi$ is linear in each region.
However, $\Psi$ is not linear on $\rS^b$.
To analyze $\Psi$ on $\rS^b$, we exploit properties of the QBD introduced in \S \ref{secFast}.
We partition $\rS^b$ into three subsets, depending on the drift rates in \eqref{drift}.
Let $\AA$ be the set of all $x \in \rS^b$ for which the QBD is positive recurrent, as given in \eqref{posrec}; i.e., let
\bequ \label{Aset}
\AA \equiv \{x \in \rS^b \; \mid \; \delta_{-}(x) > 0 > \delta_{+}(x) \}.
\eeq
Let the other two subsets be
\bequ \label{AplusSet}
\AA^{+} \equiv \{x \in \rS^b \; \mid \; \delta_{+}(x) \ge 0\} \qandq
\AA^{-} \equiv \{x \in \rS^b \; \mid \; \delta_{-}(x) \le 0\}.
\eeq
By the relation \eqn{DriftDiff}, there are no other alternatives; i.e., $\rS^b = \AA \cup \AA^+ \cup \AA^-$.
Observe that $\pi_{1,2} (x) = 1$ in $\AA^+$, while $\pi_{1,2} (x) = 0$ in $\AA^-$.

From the continuity of the QBD drift-rates in \eqref{drift}, if follows that $\AA$ is an open and connected subset of $\rS^b$.
Hence, $\AA$ can be regarded as an open connected subset of $\RR^2_+$, since $\rS^b$ is homoeomorphic to $\RR_+ \times [0, m_2]$.
Just as for the open subsets $\rS^+$ and $\rS-$, if the initial value is in $\AA$, then the ODE will remain within $\AA$ over some initial subinterval.
In contrast, the situation is more complicated in $\AA^+$ and $\AA^-$.

There is potential movement out of the region $\rS^b$ only from the sets $\AA^+$ and $\AA^-$.
To understand what can happen, let
$d (x(t)) \equiv q_1 (t) - r q_2 (t)$ and $d' (x(t)) \equiv \dot{q}_1 (t) - r \dot{q}_2 (t)$, from \eqref{odeDetails}.
On $\AA^+$ and $\AA^-$, the possibilities can be determined from the following lemma.

\begin{lemma}\label{lmDriftDeriv}  On $\rS^b$,
if $\pi_{1,2} (x) = 1$, then $d'(x) = \delta_{+}(x)$; if $\pi_{1,2} (x) = 0$, then  $d'(x) = \delta_{-}(x)$.
Hence, on $\AA^+$, $d'(x) \ge 0$, while on $\AA^-$, $d'(x) \le 0$.
\end{lemma}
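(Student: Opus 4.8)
The lemma is a direct computation: once $\pi_{1,2}(x)$ is frozen at $1$ or $0$ the vector field $\Psi$ in \eqref{odeDetails} becomes linear, and $d'(x) = \dot q_1 - r\dot q_2$ is then just the (fluid) drift of the queue-difference process under the corresponding control regime, which is exactly the quantity assembled in \eqref{drift} from the FTSP transition rates in \eqref{bd1}--\eqref{bd4}. The plan is therefore to fix $x \in \rS^b$, treat the cases $\pi_{1,2}(x) = 1$ and $\pi_{1,2}(x) = 0$ separately, substitute the value of $\pi_{1,2}(x)$ into \eqref{odeDetails}, form $d'(x)$, and match it term by term with $\delta_{+}(x)$, resp.\ $\delta_{-}(x)$.

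Take first $\pi_{1,2}(x) = 1$. Then the factor $1 - \pi_{1,2}(x)$ annihilates the bracketed term in the equation for $\dot q_2$, so $\dot q_2 = \lambda_2 - \theta_2 q_2$, while $\dot q_1 = \lambda_1 - m_1\mu_{1,1} - z_{1,2}\mu_{1,2} - (m_2 - z_{1,2})\mu_{2,2} - \theta_1 q_1$. Hence
\[
d'(x) = \bigl(\lambda_1 - m_1\mu_{1,1} - z_{1,2}\mu_{1,2} - (m_2 - z_{1,2})\mu_{2,2} - \theta_1 q_1\bigr) - r\bigl(\lambda_2 - \theta_2 q_2\bigr).
\]
Using that $z_{1,1} = m_1$ on $\rS^b$ (the operating regime fixed in \S\ref{secODE}) and that $rk = j$ since $r = j/k$, the first bracket is $\lambda^{(k)}_{+}(x) - \mu^{(k)}_{+}(x)$ and $\theta_2 q_2 - \lambda_2$ is $\lambda^{(j)}_{+}(x) - \mu^{(j)}_{+}(x)$ by \eqref{bd3}--\eqref{bd4}; substituting these into \eqref{drift} gives $d'(x) = \delta_{+}(x)$. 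The case $\pi_{1,2}(x) = 0$ is symmetric: now the bracket in $\dot q_1$ drops out and the one in $\dot q_2$ survives, and the identical comparison, this time against the lower-region rates \eqref{bd1}--\eqref{bd2}, yields $d'(x) = \delta_{-}(x)$.

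The last two assertions are then immediate. It was observed just below \eqref{AplusSet} that $\pi_{1,2}(x) = 1$ for $x \in \AA^{+}$ and $\pi_{1,2}(x) = 0$ for $x \in \AA^{-}$; hence on $\AA^{+}$ the first case gives $d'(x) = \delta_{+}(x) \ge 0$ by the defining inequality of $\AA^{+}$, and on $\AA^{-}$ the second case gives $d'(x) = \delta_{-}(x) \le 0$ by the defining inequality of $\AA^{-}$.

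There is no real obstacle in this lemma; the only thing that needs care is keeping straight which service-completion and abandonment terms feed $\dot q_1$ versus $\dot q_2$ under each of the two extreme control settings, and then recognizing --- after the substitution $z_{1,1} = m_1$ --- that these are precisely the upward and downward rates of the FTSP listed in \eqref{bd1}--\eqref{bd4}. One should also mind the scaling convention of \S\ref{secQBD} when matching the multiplicative constants in \eqref{drift}; since only the signs of $\delta_{\pm}$ and $d'$ enter the conclusions used later, this causes no trouble.
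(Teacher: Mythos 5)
Your proof is correct and is essentially the paper's own argument (the paper's proof is the one-line instruction to substitute $\pi_{1,2}(x)\in\{0,1\}$ into \eqref{odeDetails} and compare with \eqref{drift} via \eqref{bd1}--\eqref{bd4}), just written out explicitly. You are also right to flag the factor-of-$k$ normalization from the rescaling in \S\ref{secQBD} (strictly $d'(x)=\delta_{\pm}(x)/k$ under that convention), which the paper glosses over and which is immaterial since only the signs are used.
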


\begin{proof}
Substitute the appropriate values of $\pi_{1,2} (x (t))$ into \eqref{odeDetails} and compute $\delta_{\pm} (x)$
from \eqref{bd1}--\eqref{bd4}, recalling that $r \equiv j/k$.
\end{proof}

We next separate equality from strict inequality for the weak inequalities in Lemma \ref{lmDriftDeriv}.  For that purpose, we decompose
the sets $\AA^+$ and $\AA^-$ by letting
\bea \label{AplusSubset}
\AA_+^{+} & \equiv \{x \in \AA^+ \; \mid \; \delta_+ (x) > 0\}, \quad \AA_0^{+} & \equiv \{x \in \AA^+ \; \mid \; \delta_+ (x) = 0\}, \nonumber \\
\AA_-^{-} & \equiv \{x \in \AA^- \; \mid \; \delta_- (x) < 0\}, \quad \AA_0^{-} & \equiv \{x \in \AA^- \; \mid \; \delta_- (x) = 0\}.
\eea

\begin{lemma}\label{possibilities}
Suppose that a solution exists for the ODE over a sufficiently small interval starting at $x(0)$.
If $x(0) \in \AA^+_+$, then $x(0+) \in \rS^+$; if $x(0) \in \AA^+_0$, then $x(0+) \in \rS - \rS^- - \AA^-$;
if $x(0) \in \AA^-_-$, then $x(0+) \in \rS^-$; if $x(0) \in \AA^-_0$, then $x(0+) \in \rS - \rS^+ - \AA^+$.
\end{lemma}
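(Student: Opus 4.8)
The plan is to reduce the statement to the behaviour of the single scalar $g(t)\equiv q_1(t)-r\,q_2(t)-\kappa$, the signed distance of $x(t)$ from the hyperplane $\rS^{b}$: since $x(0)\in\AA^{+}\cup\AA^{-}\subseteq\rS^{b}$ we have $g(0)=0$, and $x(t)\in\rS^{+},\ \rS^{b},\ \rS^{-}$ according as $g(t)>0,\ =0,\ <0$. Two ingredients drive the argument. First, the computation in the proof of Lemma~\ref{lmDriftDeriv} — substituting the relevant value of $\pi_{1,2}$ into~\eqref{odeDetails} — is an algebraic identity, valid throughout~$\rS$ and not only on~$\rS^{b}$; hence wherever $x(t)\in\rS^{+}$ (so $\pi_{1,2}(x(t))=1$) the right derivative of $g$ equals $\delta_{+}(x(t))$, and wherever $x(t)\in\rS^{-}$ (so $\pi_{1,2}(x(t))=0$) it equals $\delta_{-}(x(t))$. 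Second, $\delta_{+}$ and $\delta_{-}$ are affine — hence continuous — on~$\rS$, with $\delta_{-}-\delta_{+}>0$ everywhere by~\eqref{DriftDiff}; in particular $\delta_{+}(x(0))=0$ forces $\delta_{-}(x(0))>0$, and $\delta_{-}(x(0))=0$ forces $\delta_{+}(x(0))<0$. Finally, $g$ is continuous because $x$ is.

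The two non-degenerate cases are immediate. If $x(0)\in\AA^{+}_{+}$ then $\pi_{1,2}(x(0))=1$, so by Lemma~\ref{lmDriftDeriv} the right derivative $g'(0+)=\delta_{+}(x(0))>0$; hence $g(t)>0$ — i.e.\ $x(t)\in\rS^{+}$ — for all small $t>0$, which is $x(0+)\in\rS^{+}$. Symmetrically, if $x(0)\in\AA^{-}_{-}$ then $\pi_{1,2}(x(0))=0$ and $g'(0+)=\delta_{-}(x(0))<0$, so $x(0+)\in\rS^{-}$.

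The degenerate cases $x(0)\in\AA^{+}_{0}$ and $x(0)\in\AA^{-}_{0}$ are the crux, since there $g'(0+)=0$ and first-order information is inconclusive: we cannot tell whether $x$ enters $\rS^{+}$, enters $\rS^{-}$, or stays on $\rS^{b}$. The point is that we need not decide this — the conclusion only asks us to exclude two sets. Take $x(0)\in\AA^{+}_{0}$, so $\delta_{+}(x(0))=0$ and hence $\delta_{-}(x(0))>0$. Choose a neighbourhood $N$ of $x(0)$ with $\delta_{-}>0$ on $N$, and $\epsilon>0$ with $x([0,\epsilon])\subseteq N$. Then: (i) $x(t)\notin\AA^{-}$ for $t\in[0,\epsilon]$, because $\AA^{-}$ requires $\delta_{-}\le 0$ whereas $\delta_{-}>0$ on $N$; and (ii) $g(t)\ge 0$ — equivalently $x(t)\notin\rS^{-}$ — for $t\in[0,\epsilon]$. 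For (ii), suppose instead $g(t^{*})<0$ for some $t^{*}\in(0,\epsilon]$, and set $t_0\equiv\sup\{t\in[0,t^{*}]:g(t)\ge 0\}$; then $t_0<t^{*}$, $g(t_0)=0$ by continuity, and $g<0$ throughout $(t_0,t^{*}]$. Hence $x(t)\in\rS^{-}$ on $(t_0,t^{*}]$, so the right derivative of $g$ there equals $\delta_{-}(x(t))>0$; since a continuous function with everywhere-positive right derivative on an interval is strictly increasing, $g(t^{*})>g(t_0)=0$, contradicting $g(t^{*})<0$. Combining (i) and (ii) gives $x(0+)\in\rS-\rS^{-}-\AA^{-}=\rS^{+}\cup\AA\cup\AA^{+}$. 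The case $x(0)\in\AA^{-}_{0}$ is the mirror image: $\delta_{-}(x(0))=0$ forces $\delta_{+}(x(0))<0$, which excludes $\AA^{+}$ by continuity and excludes $\rS^{+}$ by the same invariance argument (now showing $g\le 0$).

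Two minor technical points remain. Because a solution is only guaranteed to be right-differentiable at each instant, the monotonicity of $g$ above must be read off from the sign of its right Dini derivative, via the standard real-analysis fact just used. And the boundary constraints defining $\rS$ stay inactive over these short windows: on $\rS^{-}$ one automatically has $q_2>0$ (since $q_1\ge\kappa$) and $\dot q_1>0$ whenever $q_1=0$ (as $\lambda_1>\mu_{1,1}m_1$), with the analogous statements on $\rS^{+}$, so in each open region the ODE is the plain affine system to which the drift identities apply. I expect step~(ii) of the degenerate cases — the ``drift points back toward~$\rS^{b}$'' invariance argument — to be the only real obstacle; everything else is bookkeeping with $g$, the continuity of $\delta_{\pm}$, and the strict inequality~\eqref{DriftDiff}.
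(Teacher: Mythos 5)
Your proof is correct and follows essentially the same route as the paper's: the non-degenerate cases come straight from Lemma~\ref{lmDriftDeriv}, and the degenerate cases are handled by combining \eqref{DriftDiff} with the fact that the drift in $\rS^-$ (resp.\ $\rS^+$) near $x(0)$ would push the solution back toward $\rS^b$, while $\AA^-$ (resp.\ $\AA^+$) is excluded by continuity of $\delta_{\pm}$. The paper compresses this last exclusion into a one-line remark about the induced jump in $d'(x)$; your first-crossing-time monotonicity argument is a rigorous filling-in of exactly that point.
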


\begin{proof}  We only treat the first two cases, because the reasoning for the last two is the same.
If $x(0) \in \AA^+_+$, then $d'(x (0)) > 0$ by
Lemma \ref{lmDriftDeriv}, which implies the result.
If $x(0) \in \AA^+_0$, then $d'(x (0)) = 0$ by
Lemma \ref{lmDriftDeriv}.
To see why we cannot have $x(0+) \in \AA^- \cup \rS^-$, note that then $\pi_{1,2} (x)$ would jump from $1$ to $0$, which would cause
a jump in $d'(x)$ because of
Lemma \ref{lmDriftDeriv} and the inequality in \eqref{DriftDiff}.
\end{proof}

We now are ready to establish local Lipschitz continuity.

\begin{definition}{\em $($local Lipschitz continuity$)$} \label{defLip}
A function $f : \Omega_2 \ra \RR^m$, where $\Omega_1 \subseteq \Omega_2 \subseteq \RR^n$, is locally Lipschitz continuous on $\Omega_1$
within $\Omega_2$ if, for every $v_0 \in \Omega_1$, there exists a neighborhood $U \subseteq \Omega_2$
of $v_0$ such that $f$ restricted to $U$ is Lipschitz continuous; i.e., there exists a constant $K \equiv K(U)$ such that
$\| f(v_1) - f(v_2) \| \le K \| v_1 - v_2 \|$ for every $v_1, v_2 \in U$.
\end{definition}

\begin{theorem} \label{thLipsz}
The function $\Psi$ in {\em \eqref{odeDetails}} is locally Lipschitz continuous
on $\rS^+$ within $\rS^+$, on $\rS^-$ within $\rS^-$, on $\rA $ within $\rS^b$, on $\rA^+$ within $\rS - \rS^-$ and
on $\rA^{-}$ within $\rS - \rS^+$.
\end{theorem}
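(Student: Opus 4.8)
\emph{Strategy.} The plan is to write $\Psi=\pi_{1,2}\,\Psi^{+}+(1-\pi_{1,2})\,\Psi^{-}$, where $\Psi^{\pm}$ are the affine vector fields obtained from \eqref{odeDetails} by freezing $\pi_{1,2}\equiv 1$ and $\pi_{1,2}\equiv 0$ respectively; both $\Psi^{\pm}$ are affine, hence Lipschitz. On $\rS^{+}$ one has $\pi_{1,2}\equiv 1$, so $\Psi=\Psi^{+}$, and on $\rS^{-}$ one has $\pi_{1,2}\equiv 0$, so $\Psi=\Psi^{-}$; since $\rS^{+}$ and $\rS^{-}$ are open, local Lipschitz continuity of $\Psi$ on $\rS^{+}$ within $\rS^{+}$ and on $\rS^{-}$ within $\rS^{-}$ is immediate. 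Everything else reduces to showing that $\pi_{1,2}$ itself is locally Lipschitz on $\rA$ within $\rS^{b}$, on $\rA^{+}$ within $\rS-\rS^{-}$, and on $\rA^{-}$ within $\rS-\rS^{+}$.

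\emph{The region $\rA$.} First I would use the matrix-geometric representation \eqref{desired2} (or the sub-block form \eqref{piRep2}). By \eqref{bd1}--\eqref{bd4} the blocks $B(x),A_{0}(x),A_{1}(x),A_{2}(x)$ are affine in $x$, hence real-analytic with locally bounded derivatives. On $\rA$ the QBD is positive recurrent, so $\mathrm{sp}(R(x))<1$; by the standard smooth-dependence results for the minimal nonnegative solution of the matrix-quadratic equation $A_{0}+RA_{1}+R^{2}A_{2}=0$ (its linearization is invertible off the null-recurrent boundary), $R(x)$ is real-analytic on $\rA$, and $\mathrm{sp}(R(\cdot))$ stays bounded away from $1$ on small neighborhoods, so $(I-R(x))^{-1}$ is real-analytic and locally bounded. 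The boundary vector $\af_{0}(x)$ is the left null vector of $B(x)+R(x)A_{2}(x)$, whose null space is one-dimensional throughout $\rA$ by positive recurrence; solving the associated nonsingular system (one redundant balance equation replaced by the normalization in \eqref{boundary}) makes $\af_{0}(x)$ real-analytic. Hence $\pi_{1,2}(x)=\af_{0}(x)(I-R(x))^{-1}\textbf{1}_{+}$ is real-analytic, a fortiori locally Lipschitz, on $\rA$; and since $\rA$ is relatively open in $\rS^{b}$, this is exactly the asserted property on $\rA$ within $\rS^{b}$. (When $r=1$ one may instead use the explicit formula \eqref{piRep1}, whose denominator $\delta_{-}(x)-\delta_{+}(x)$ is bounded below by a positive constant by \eqref{DriftDiff}.)

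\emph{The regions $\rA^{+}$ and $\rA^{-}$.} By \eqref{DriftDiff}, $\delta_{-}-\delta_{+}>0$ on all of $\rS^{b}$, so $\rA^{+}$ and $\rA^{-}$ have disjoint closures; hence a sufficiently small neighborhood of a point $v_{0}\in\rA^{+}$ within $\rS-\rS^{-}$ meets only $\rS^{+}$, $\rA^{+}$ and $\rA$. On such a neighborhood $\pi_{1,2}\equiv 1$ off $\rA$, so $\Psi-\Psi^{+}=(1-\pi_{1,2})(\Psi^{-}-\Psi^{+})$ is supported on $\rA$, and the problem reduces to two facts about $\pi_{1,2}$ near $\rA^{+}_{0}$: (a) the uniform local Lipschitz bound on $\pi_{1,2}|_{\rA}$ up to $\rA^{+}_{0}$ coming from the previous step; and (b) the decay estimate $1-\pi_{1,2}(x)\le C\,|\delta_{+}(x)|$ for $x\in\rA$ near $\rA^{+}_{0}$, which, together with the fact that $\delta_{+}$ is affine and vanishes on $\rA^{+}_{0}$, glues the value $\pi_{1,2}\equiv 1$ on $\rA^{+}\cup\rS^{+}$ Lipschitz-continuously to $\pi_{1,2}|_{\rA}$. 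For (b) I would use the regenerative identity $1-\pi_{1,2}(x)=E[T^{-}(x)]/(E[T^{+}(x)]+E[T^{-}(x)])$ from the proof of Theorem \ref{piRep}: as $\delta_{+}(x)\uparrow 0$ the FTSP's upper region becomes null recurrent, so $E[T^{+}(x)]\to\infty$ while $E[T^{-}(x)]$ stays bounded, and a first-passage (Wald-type) estimate for the QBD --- comparing it with a bounded-increment random walk of drift $\delta_{+}(x)$ --- gives $E[T^{+}(x)]\ge c/|\delta_{+}(x)|$, hence $1-\pi_{1,2}(x)\le E[T^{-}(x)]\,|\delta_{+}(x)|/c$. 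The region $\rA^{-}$ within $\rS-\rS^{+}$ is symmetric, with $(\delta_{+},\rS^{+},\rA^{+})$ and $(\delta_{-},\rS^{-},\rA^{-})$ interchanged.

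\emph{Main obstacle.} The hard part is the last step: controlling $\pi_{1,2}$ at the interface $\rA^{\pm}_{0}$ between positive and null recurrence of the FTSP, where the matrix-geometric representation degenerates ($\mathrm{sp}(R)\uparrow 1$) and the explicit formulas become unavailable. One needs not only the rate $1-\pi_{1,2}(x)=O(|\delta_{+}(x)|)$ but also that the gradient of $\pi_{1,2}|_{\rA}$ does not blow up as $x\to\rA^{+}_{0}$; I expect the cleanest route to both is through quantitative QBD first-passage-time estimates rather than through the $R$-matrix formulas. The separation of $\rA^{+}$ from $\rA^{-}$ furnished by \eqref{DriftDiff} is exactly what confines all the delicacy to these interfaces; elsewhere --- on $\rS^{\pm}$, and in the interiors of $\rA$, $\rA^{+}$, $\rA^{-}$ --- local Lipschitz continuity is either trivial or follows from the analyticity established in the $\rA$-step.
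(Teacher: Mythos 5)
Your proposal is correct in structure and, for most of the theorem, follows the same route as the paper: the cases $\rS^+$ and $\rS^-$ are handled identically (affinity of $\Psi$ once $\pi_{1,2}$ is frozen at $1$ or $0$), and on $\rA$ your smooth-dependence argument for the minimal solution $R(x)$ of $A_0+RA_1+R^2A_2=0$ and for the boundary vector $\af_0(x)$ is exactly the paper's argument, which invokes He's differentiability theorem for $R$ and $G$ and then differentiates the normalization \eqref{boundary} to get continuity of $\pi_{1,2}'$ on $\rA$. Where you genuinely diverge is at the delicate interfaces $\rA^{+}$ and $\rA^{-}$. The paper stays inside the matrix-analytic framework: it passes to the sub-block representation \eqref{piRep2} precisely because $\af_0(x)$ there has a finite limit while all the degeneracy is isolated in $(I-R^+(x))^{-1}$, and it then performs a heavy-traffic expansion of the spectral radius, $1-\eta^+(x)=c(x_b)\,|\delta_{+}(x)|+o(|\delta_{+}(x)|)$, to conclude $|\pi_{1,2}(x)-1|\le K_1|\delta_{+}(x)|\le K_2\|x-x_b\|$. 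You instead propose a probabilistic route to the same key estimate: the regenerative identity $1-\pi_{1,2}=E[T^-]/(E[T^+]+E[T^-])$ plus a Wald-type first-passage bound $E[T^+(x)]\ge c/|\delta_{+}(x)|$. Both routes are sound and deliver the same linear decay in $|\delta_{+}(x)|$; the spectral expansion gives sharper asymptotics (explicit constants via the eigenvector structure of $R^+$), while your random-walk comparison is more elementary and avoids the caudal-characteristic machinery. One further point in your favor: you explicitly flag that a pointwise bound $|\pi_{1,2}(x)-\pi_{1,2}(x_b)|\le K\|x-x_b\|$ at the interface does not by itself yield the two-point Lipschitz bound $|\pi_{1,2}(x_1)-\pi_{1,2}(x_2)|\le K'\|x_1-x_2\|$ for arbitrary $x_1,x_2$ in a neighborhood of $x_b$ (one also needs the gradient of $\pi_{1,2}|_{\rA}$ not to blow up approaching $\rA^{+}_0$); the paper passes over this with a triangle inequality through $x_b$, which only controls pairs anchored at $x_b$, so your identification of this as the residual obstacle is accurate rather than a defect of your plan.
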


Note that all of $\rS$ is covered by the five cases in Theorem \ref{thLipsz}.  Also note that, in each case,
when we conclude that $\Psi$ is Lipschitz continuous on $\Omega_1$
within $\Omega_2$, if a solution exists starting at some point in $\Omega_1$, then it will necessarily remain in $\Omega_2$ for a short
interval, by the reasoning above.
We postpone the relatively long proof until \S \ref{secProofThLipsz}.

\subsection{Global Existence and Uniqueness}\label{secGlobal}

This section is devoted to completing the proof of Theorem \ref{th1} by establishing global existence and uniqueness.
We first observe that,
in general, one overall differentiable solution to the ODE over $[0, \infty)$ may not exist.
From either $\rS^-$ or $\rS^+$, the solution $x$ can hit $\rS^b$, i.e., one of the sets $\AA$, $\AA^-$ and $\AA^+$,
and have drifts that are inconsistent with the drifts in the new destination set.
For example, in $\rS^+$ we necessarily have $\pi_{1,2} (x) = 1$.  However, in general there is
nothing preventing $x(t) \ra x(t_b)$, where $x(t) \in \rS^+$ with $\pi_{1,2} (x(t)) = 1$ but also
$\delta_{+} (x(t)) < 0 < \delta_{-} (x(t))$
while  $x(t_b) \in \AA$, necessarily with $\delta_{+} (x(t_b) < 0 < \delta_{-} (x(t_b))$.
The probability $\pi_{1,2} (x (t))$ jumps instantaneously from $1$
to some value strictly between $0$ and $1$ when $\AA$ is hit.
A numerical example is given in the appendix.
To treat that case, We can
start a new ODE at this hitting time of $\AA$.

We first show that the possible values of $x$ are contained in a compact subset of $\rS$,
provided that the initial values of the queue lengths are constrained.
That is accomplished by
proving that a solution to the IVP \eqref{IVP} is bounded.
We use the notation: $a \vee b \equiv \max \{ a, b \}$.
\begin{theorem}{\em $($boundedness$)$} \label{thBound}
Every solution to the {\em IVP} \eqref{IVP} is bounded. In particular, the following upper bounds for the fluid queues hold:
\bequ \label{bound1}
q_i(t) \le q^{bd}_i \equiv q_{i}(0) \vee \lm_i / \theta_i \quad t \ge 0, \quad i = 1,2.
\eeq
\end{theorem}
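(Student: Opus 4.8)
The plan is to bound the three coordinates of $x$ one at a time, and all the work is really for $q_1$ and $q_2$. The coordinate $z_{1,2}$ is free: any solution lives in $\rS$, so $0\le z_{1,2}(t)\le m_2$ for every $t$ (this is also forced directly by the third line of \eqref{odeDetails}, whose right-hand side is $\ge 0$ when $z_{1,2}=0$ and $\le 0$ when $z_{1,2}=m_2$, as noted in \S\ref{secMain}). Likewise the lower bounds $q_i(t)\ge 0$ hold simply because $x(t)\in\rS$. So the whole content is the pair of upper bounds \eqref{bound1} for $q_1$ and $q_2$, and it suffices to establish them for a single generic solution of the IVP.

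The key observation is that every service and sharing term in the first line of \eqref{odeDetails} has a sign that only helps: since $\pi_{1,2}(x(t))\in[0,1]$, $m_1\mu_{1,1}>0$, and $z_{1,2}(t)\mu_{1,2}+(m_2-z_{1,2}(t))\mu_{2,2}\ge 0$, we obtain, at every $t$ where $q_1$ is differentiable,
\[
\dot q_1(t)\ \le\ \lambda_1-\theta_1 q_1(t),
\]
and the identical reasoning applied to the second line (using $1-\pi_{1,2}(x(t))\in[0,1]$ and $(m_2-z_{1,2}(t))\mu_{2,2}+z_{1,2}(t)\mu_{1,2}\ge 0$) gives $\dot q_2(t)\le\lambda_2-\theta_2 q_2(t)$. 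From such a scalar linear differential inequality the bound is routine: setting $u_i(t)\equiv q_i(t)-\lambda_i/\theta_i$ we get $\frac{d}{dt}\bigl(e^{\theta_i t}u_i(t)\bigr)=e^{\theta_i t}\bigl(\dot u_i(t)+\theta_i u_i(t)\bigr)\le 0$, hence $u_i(t)\le u_i(0)e^{-\theta_i t}$ for all $t\ge 0$, which is precisely $q_i(t)\le q_i(0)\vee(\lambda_i/\theta_i)=q_i^{bd}$. Putting the three coordinates together, every solution of the IVP takes values in the compact set $[\kappa,q_1^{bd}]\times[0,q_2^{bd}]\times[0,m_2]$, so it is bounded.

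The one point requiring care — and it is the only real obstacle — is that, because $\pi_{1,2}$ is discontinuous across $\rS^b$, the solution supplied by Theorem~\ref{th1} is guaranteed only to be continuous, right-differentiable everywhere, and differentiable on each subinterval $(t,t+\delta(t))$, so one should not simply invoke the smooth Grönwall lemma. This is harmless: on any finite horizon $[0,T]$ the field $\Psi$ is bounded on the compact set $x([0,T])$, hence $x$ is Lipschitz there and the integration leading to $u_i(t)\le u_i(0)e^{-\theta_i t}$ is legitimate. Alternatively, one can bypass absolute continuity by a direct contradiction argument: with $\bar q\equiv q_i^{bd}\ge\lambda_i/\theta_i$, if $q_i$ ever exceeds $\bar q$, set $t_0\equiv\inf\{t:q_i(t)>\bar q\}$, so $q_i(t_0)=\bar q$ by continuity; pick $t_1\in(t_0,t_0+\delta(t_0))$ with $q_i(t_1)>\bar q$, and let $t_2$ be the last time in $[t_0,t_1]$ with $q_i=\bar q$. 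Then $q_i>\bar q$ on $(t_2,t_1]$, so $\dot q_i<\lambda_i-\theta_i\bar q\le 0$ there, and the mean value theorem forces $q_i(t_1)<q_i(t_2)=\bar q$, a contradiction. Either way the upper bounds hold and the proof is complete.
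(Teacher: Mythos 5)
Your proof is correct and follows essentially the same route as the paper's: both drop the nonnegative service terms to reduce each queue to the scalar abandonment-only comparison $\dot q_i \le \lambda_i - \theta_i q_i$, the paper via an explicit comparison function $u_i$ and a mean-value-theorem contradiction, you via an integrating factor. Your explicit handling of the limited regularity of the solution (only piecewise differentiable across switching times) is a point the paper's argument passes over silently, and your fallback contradiction argument covers it cleanly.
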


\begin{proof}
Since $0 \le z_{1,2} \le m_2$ and $q_i \ge 0$ in $\rS$,
we only need to establish \eqref{bound1}.
To do so, it suffices to consider the bounding function describing the queue-length process of each queue in a modified system with no service processes,
so that all the fluid output is due to abandonment, which produces a simple one-dimensional ODE for each queue;
for the remaining details, see \S \ref{secProofs} in the appendix.
\end{proof}

\paragraph{\sc Proof of Theorem \ref{th1} $(ii)$} 

It follows from Theorem \ref{th1} $(i)$ established above, and Theorems \ref{thLipsz} and \ref{thBound},
that any solution $x$ on $[0, \delta)$ can be extended to an interval $[0, \delta')$, $\delta' > \delta$ (even $\delta' = \infty$),
with the solution $\{x (t) : t \in [0, \delta')\}$ again being unique, provided that that the solution $x$
makes no transitions from $\rS - \rS^b$ to $\AA$, causing a discontinuity in $\pi_{1,2} (x)$ and thus $\Psi$ in \eqn{odeDetails}.
(See Theorem 3.3 in \cite{Khalil} and its proof for supporting details.)

Moreover, the solution in $\rS^+$ or $\rS^-$ has a left limit at the time it hits $\AA$.  The left limit exists because,
by Theorem \ref{thBound}, the solution is bounded, and because the derivative in either $\rS^+$ or $\rS^-$ is bounded, by \eqn{odeDetails}.
At each such hitting time, a new ODE is constructed starting in $\AA$.  That ensures the overall continuity of $x$.
In general, there can be accumulation points of such hitting times of the set $\AA$ from $\rS-\rS^b$.
However, any such accumulation point $t$ must be in either $\AA^+$ or $\AA^-$.  That is so, because there then are sequences
$\{t^i_n: n\ge 1\}$, $i = 1,2$ with $x(t^1_n) \in \rS - \rS^b$ and $x(t^2_n) \in \AA$ for all $n$
with $t^i_n \uparrow t$ and $x(t^i_n) \ra x(t) \in \rS^b$ as $n\tinf$ for $i = 1,2$.
Finally, by Theorem \ref{thLipsz}, the function $\Psi$ is locally Lipschitz continuous at each point in $\AA^+ \cup \AA^-$.
Hence, the solution $x$ must actually be differentiable at each of these accumulation times of hitting times.
As a consequence, $x$ is continuous and differentiable almost everywhere throughout $[0, \infty)$.
\qed

In the proof of Theorem \ref{th1} $(ii)$, just completed, we have also established the following result.
\begin{theorem}{\em $($extension to a global solution$)$} \label{thODE2}
Let $x$ be the unique differentiable solution to the {\em IVP} \eqref{IVP} on an interval $[0, \delta)$, established in \S {\em \ref{secPsi}}.
If it is known that the solution can never transition from $\rS^+$ or $\rS^-$ to $\AA$
then there exists a unique differentiable solution to the {\em IVP} \eqref{IVP} on $[0, \infty)$.
\end{theorem}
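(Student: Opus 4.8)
The plan is to run the maximal-interval continuation argument essentially contained in the proof of Theorem~\ref{th1}$(ii)$, observing that the added hypothesis removes the only mechanism that can destroy differentiability, namely a transition from $\rS-\rS^b=\rS^+\cup\rS^-$ into $\AA$. Let $[0,T)$ be the maximal interval on which a differentiable solution of the IVP~\eqref{IVP} exists --- well defined once we have local existence and uniqueness from Theorem~\ref{th1}$(i)$ together with the concatenation discussed below --- and suppose for contradiction that $T<\infty$. First I would verify that the left limit $p\equiv\lim_{t\uparrow T}x(t)$ exists in $\rS$: by Theorem~\ref{thBound} the solution stays in the compact set $K\equiv[0,q_1^{bd}]\times[0,q_2^{bd}]\times[0,m_2]\subset\rS$, and since $\pi_{1,2}\in[0,1]$ the right-hand side $\Psi$ is bounded on $K$, so $\dot x$ is bounded on $[0,T)$, $x$ is Lipschitz there, and $p$ exists with $p\in K\subset\rS$.

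Next I would split on which of the five pieces $\rS^+,\rS^-,\AA,\AA^+,\AA^-$ contains $p$ and, in each case, extend $x$ differentiably past $T$ by the Picard--Lindel\"of theorem (Theorem~2.2 of \cite{T09}; Theorem~3.1 of \cite{Khalil}) applied to the version of $\Psi$ that Theorem~\ref{thLipsz} certifies to be locally Lipschitz near $p$, contradicting maximality. If $p\in\rS^+$ or $\rS^-$, these sets are open and $\Psi$ is locally Lipschitz there, so $x(t)$ stays in the region near $T$ and the extension is immediate. If $p\in\AA$, the hypothesis forbids a transition from $\rS^+\cup\rS^-$ into $\AA$, so $x(t)\in\rS^b$, hence $x(t)\in\AA$ (since $\AA$ is relatively open in $\rS^b$), for $t$ near $T$; the solution then remains in $\AA$ over a subinterval and $\Psi$ is locally Lipschitz on $\AA$ within $\rS^b$, so Picard--Lindel\"of applied to the restricted two-dimensional ODE on $\AA\subset\rS^b$ supplies the extension. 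If $p\in\AA^+$ (resp.\ $\AA^-$), then $\pi_{1,2}(p)=1$ (resp.\ $0$) by Lemma~\ref{lmDriftDeriv}; arguing as in Lemma~\ref{possibilities}, the inequality~\eqref{DriftDiff} rules out a jump of $\pi_{1,2}$ from $1$ to $0$ (resp.\ from $0$ to $1$), so by continuity of $x$ the solution necessarily reaches $p$ through $\rS-\rS^-$ (resp.\ $\rS-\rS^+$), and since $\Psi$ is locally Lipschitz on $\AA^+$ within $\rS-\rS^-$ (resp.\ on $\AA^-$ within $\rS-\rS^+$), the extension follows once more. In every case $T<\infty$ is impossible, so $T=\infty$. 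Uniqueness on $[0,\infty)$ then follows by concatenating the unique local Picard--Lindel\"of solutions, the entry and exit points of each active region being pinned down by the sign conditions in Lemmas~\ref{lmDriftDeriv}--\ref{possibilities}.

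The step I expect to be the main obstacle is the boundary bookkeeping when $p\in\AA^+\cup\AA^-$: one must verify that the trajectory reaches $p$ \emph{only} through the precise set ($\rS-\rS^-$ or $\rS-\rS^+$) on which Theorem~\ref{thLipsz} provides local Lipschitz continuity, excluding the competing approaches (from $\rS^-$ to $\AA^+$, or from $\rS^+$ to $\AA^-$) by means of the drift-difference inequality~\eqref{DriftDiff}, the drift formulas~\eqref{bd1}--\eqref{bd4}, and the continuity of $\pi_{1,2}$ established in \S\ref{secFast}. This is exactly where the hypothesis ``the solution can never transition from $\rS^+$ or $\rS^-$ to $\AA$'' does its work, upgrading the almost-everywhere-differentiable solution of Theorem~\ref{th1}$(ii)$ to a genuinely differentiable solution on all of $[0,\infty)$.
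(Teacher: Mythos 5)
Your proposal follows the paper's own route: the paper obtains this theorem as a byproduct of the continuation argument in the proof of Theorem \ref{th1}(ii) --- boundedness of the trajectory and of $\Psi$ (Theorem \ref{thBound}), local Lipschitz continuity of $\Psi$ on each of the five pieces (Theorem \ref{thLipsz}), and the standard maximal-interval extension (Theorem 3.3 of \cite{Khalil}) --- and that is exactly what you spell out, with the cases $p\in\rS^{+}$, $p\in\rS^{-}$ and $p\in\AA$ handled correctly. The one step that does not hold up is your treatment of $p\in\AA^{+}\cup\AA^{-}$. For $p\in\AA^{+}$ the dangerous approach is from $\rS^{-}$, which would make $\pi_{1,2}$ jump from $0$ up to $1$ at time $T$ and destroy differentiability there; your parenthetical assigns the excluded jump ``from $1$ to $0$'' to this case, so the stated exclusion does not address the approach that actually threatens the extension. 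Moreover, \eqref{DriftDiff} does not rule that approach out: hitting $\rS^{b}$ from $\rS^{-}$ forces $\delta_{-}\ge 0$ at the hitting point, and \eqref{DriftDiff} then makes $\delta_{+}(p)\ge 0$ (i.e., $p\in\AA^{+}$) kinematically \emph{consistent} with such an approach rather than impossible; Lemma \ref{possibilities} concerns forward evolution from a point of $\AA^{+}$ and does not reverse in time for free. To be fair, the paper's own proof is silent on precisely this point --- it invokes Theorem \ref{thLipsz} at points of $\AA^{+}\cup\AA^{-}$ without verifying that the trajectory arrives from the side ($\rS-\rS^{-}$, resp.\ $\rS-\rS^{+}$) on which the Lipschitz property is asserted --- so you have correctly isolated, but not actually closed, the same gap the authors leave open.
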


\section{Fluid Stationarity} \label{secStat}

We now define a stationary point for an ODE and then show that there exists a unique one %
for the ODE \eqref{odeDetails}.
We then give conditions under which the fluid solution $x \equiv \{x(t): t \ge 0\}$  converges to stationarity as $t \ra \infty$.
In \S \ref{secSufficient}, we show that it does so exponentially fast.

\begin{definition}{\em (stationary point for the fluid)} \label{defStaPt}
We say that $x^*$ is a stationary point for the ODE $($or fluid model$)$ if  $x(t) = x^*$ for all $t \ge 0$ when $x(0) = x^*$.
That is, $x^*$ is a stationary point if $\Psi(x^*) = 0$ for $\Psi$ in \eqref{ode} and \eqref{odeDetails}.
If $x(t) = x^*$ for all $t$, then we say that the fluid solution is stationary, or in steady state.
\end{definition}

\subsection{Characterization of the Stationary Point}\label{secUnique2}

By definition, a stationary point $x^* \in \rS$ satisfies $\Psi(x^*) = 0$.
From \eqref{odeDetails}, we see that this gives a system of three equations with three unknowns, namely, $q^*_1$, $q^*_2$ and $z^*_{1,2}$.
The apparent fourth variable $\pi^*_{1,2} \equiv \pi_{1,2}(x^*)$ is a function of the other three variables and
its value is determined by $x^*$.
In principle, the three equations in $\Psi(x) = 0$ can be solved directly to find all the roots of $\Psi$.
However, $\pi_{1,2}^*$ is a complicated function of $x^*$ having the complicated closed-form expression in \eqref{desired} and \eqref{desired2}.

Theorem \ref{thODEsteady} below states that, if there exists a stationary point for the fluid ODE \eqref{odeDetails},
then this point is unique, and must have the specified form.
The uniqueness of $x^*$ is proved by treating $\pi^*_{1,2}$
as a fourth variable, and adding a fourth equation to the three equations $\Psi(x) = 0$.
However, it does not prove that a stationary point exists. In general, the solution $\pi^*_{1,2}$ we get from the
system of four equations may not equal to $\pi_{1,2}(x^*)$, for the function $\pi_{1,2}$ defined in \eqref{fast105}.
The existence of a stationary point is proved in the next section.

The proof of existence is immediate from the proof of uniqueness when $\pi_{1,2}(x^*)$ is known in advance to be $0$ or $1$,
with the value determined.
That occurs everywhere except the region $\AA$; it occurs in the two regions $\rS^+$ and $\rS^-$, but it also occurs
in $\rS^b - \AA$.  Since the QBD is not positive recurrent in $\rS^b - \AA$, it follows that $\pi_{1,2}(x^*)$ can only assume one of the values,
$0$ or $1$, achieving the same value as in the neighboring region $\rS^+$ or $\rS^-$.  (We omit detailed demonstration.)  But we will have to work harder in $\AA$.

We now focus on uniqueness.
Although $\pi^*_{1,2}$ is treated as a variable, we still impose conditions on it so that it can be a legitimate solution
to \eqref{fast105}. In particular, if $q_1^* - r q^*_2 > \kappa$ then we let $\pi^*_{1,2} = 1$; if $q_1^* - r q^*_2 < \kappa$,
then we let $\pi^*_{1,2} = 0$.  Equation \eqref{piSS} below shows that $0 \le \pi^*_{1,2} \le 1$ whenever
$q_1^* - r q^*_2 = \kappa$, i.e., whenever $x^* \in \rS^b$.

For $a, b \in \RR$, recall that
$a \vee b \equiv \max\{a, b\}$ and let $a \wedge b \equiv \min\{a ,b\}$.  Let
\bequ \label{z}
z \equiv \frac{\theta_2(\lm_1 - m_1\mu_{1,1}) - r \theta_1(\lm_2 - m_2\mu_{2,2}) - \theta_1 \theta_2 \kappa} {r \theta_1\mu_{2,2} + \theta_2\mu_{1,2}}.
\eeq

\begin{theorem}{\em $($uniqueness of the stationary point$)$}\label{thODEsteady}
There can be at most one stationary point $x^* \equiv (q^*_1, q^*_2, z^*_{1,2})$ for the IVP \eqref{IVP},
which must take the form
\bequ \label{EqBalance}
\bsplit
z_{1,2}^* &= 0 \vee z \wedge m_2, \quad
q_1^* = \frac{\lm_1 - m_1\mu_{1,1} - \mu_{1,2} z_{1,2}^*}{\theta_1}, \quad
q_2^* = \frac{\lm_2 - \mu_{2,2} (m_2 - z_{1,2}^*)}{\theta_2},
\end{split}
\eeq
for $z$ in \eqref{z}.  Moreover,
\bequ \label{piSS}
\pi^*_{1,2} = \frac{\mu_{1,2}z^*_{1,2}}{\mu_{1,2}z^*_{1,2} + (m_2 - z^*_{1,2}) \mu_{2,2}}.
\eeq
\end{theorem}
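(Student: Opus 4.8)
The plan is to start from the defining relations $\Psi(x^*)=0$ in \eqref{odeDetails}, treat $\pi^*_{1,2}\equiv\pi_{1,2}(x^*)$ provisionally as a fourth unknown alongside $q^*_1,q^*_2,z^*_{1,2}$, and show that the four resulting constraints force the claimed closed form. First I would write out the three scalar equations $\dot{q}_1=\dot{q}_2=\dot{z}_{1,2}=0$ at $x^*$. Abbreviating $\Sigma^*\equiv\mu_{1,2}z^*_{1,2}+(m_2-z^*_{1,2})\mu_{2,2}$, the equation $\dot{z}_{1,2}=0$ reads $\pi^*_{1,2}(m_2-z^*_{1,2})\mu_{2,2}=(1-\pi^*_{1,2})z^*_{1,2}\mu_{1,2}$, which rearranges to $\pi^*_{1,2}\Sigma^*=\mu_{1,2}z^*_{1,2}$, hence to \eqref{piSS}. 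Since $\mu_{1,2},\mu_{2,2}>0$ and $0\le z^*_{1,2}\le m_2$, the expression in \eqref{piSS} is well defined, lies in $[0,1]$, equals $0$ exactly when $z^*_{1,2}=0$, and equals $1$ exactly when $z^*_{1,2}=m_2$.

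Next I would substitute the two identities $\pi^*_{1,2}\Sigma^*=\mu_{1,2}z^*_{1,2}$ and $(1-\pi^*_{1,2})\Sigma^*=(m_2-z^*_{1,2})\mu_{2,2}$ into the equations $\dot{q}_1=0$ and $\dot{q}_2=0$. This yields the affine expressions for $q^*_1$ and $q^*_2$ in terms of $z^*_{1,2}$ displayed in \eqref{EqBalance}. At this stage the entire candidate stationary point $x^*$, together with $\pi^*_{1,2}$, is a function of the single scalar $z^*_{1,2}$, so it only remains to determine $z^*_{1,2}$.

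The missing fourth relation is precisely the requirement that $\pi^*_{1,2}$ be consistent with the definition of $\pi_{1,2}$ as a function of $x$ in \eqref{odeDetails}: $\pi^*_{1,2}=1$ forces $q^*_1-rq^*_2\ge\kappa$, $\pi^*_{1,2}=0$ forces $q^*_1-rq^*_2\le\kappa$, and $0<\pi^*_{1,2}<1$ forces $x^*\in\rS^b$, i.e. $q^*_1-rq^*_2=\kappa$. I would then split on $z^*_{1,2}$ using \eqref{piSS}. If $0<z^*_{1,2}<m_2$, then $0<\pi^*_{1,2}<1$, so $q^*_1-rq^*_2=\kappa$; inserting the affine formulas for $q^*_1,q^*_2$ and clearing denominators produces a linear equation in $z^*_{1,2}$ whose unique root (the coefficient $r\theta_1\mu_{2,2}+\theta_2\mu_{1,2}$ is strictly positive) is exactly the constant $z$ of \eqref{z}, so this case is possible only when $0<z<m_2$. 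If $z^*_{1,2}=0$, a direct computation gives $\theta_1\theta_2(q^*_1-rq^*_2-\kappa)=z\,(r\theta_1\mu_{2,2}+\theta_2\mu_{1,2})$, so $q^*_1-rq^*_2\le\kappa$ holds iff $z\le 0$; symmetrically, if $z^*_{1,2}=m_2$ then $\theta_1\theta_2(q^*_1-rq^*_2-\kappa)=(z-m_2)(r\theta_1\mu_{2,2}+\theta_2\mu_{1,2})$, so $q^*_1-rq^*_2\ge\kappa$ holds iff $z\ge m_2$. In every case $z^*_{1,2}=0\vee z\wedge m_2$; since $z$ is a fixed constant of the model, exactly one case is consistent (the sub-cases overlap only at $z=0$ and $z=m_2$, where they return the same value). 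Hence $z^*_{1,2}$ is uniquely pinned down, and with it $q^*_1$, $q^*_2$ and $\pi^*_{1,2}$, which establishes both uniqueness and the stated form.

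I do not anticipate a real obstacle; the argument is essentially algebraic once the substitution is set up. The only places that need care are (a) the bookkeeping between $\pi^*_{1,2}$ treated as a free variable and $\pi_{1,2}(\cdot)$ treated as a function of $x$ — this is what the fourth relation encodes — and (b) verifying the sign of $q^*_1-rq^*_2-\kappa$ at the boundary values $z^*_{1,2}\in\{0,m_2\}$, which is what makes the clipping $0\vee z\wedge m_2$ come out correctly. I would also flag, in passing, that this theorem asserts only uniqueness and the necessary form; existence — that the clipped value yields a genuinely self-consistent $\pi^*_{1,2}$, in particular inside $\AA$ where $\pi_{1,2}$ is given by the QBD formula rather than by $0$ or $1$ — is postponed to the next section, although when $z\le 0$ or $z\ge m_2$ existence already follows from this computation together with Lemma \ref{lmDriftDeriv}, which forces $\delta_{\pm}(x^*)=0$ at any stationary point on $\rS^b$, placing $x^*$ in $\AA^-_0$ or $\AA^+_0$.
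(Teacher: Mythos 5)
Your proposal is correct and follows essentially the same route as the paper's proof: derive \eqref{piSS} from $\dot{z}_{1,2}=0$, substitute to get $q_1^*,q_2^*$ affine in $z^*_{1,2}$, and then use the consistency of $\pi^*_{1,2}$ with the definition of $\pi_{1,2}(\cdot)$ (i.e., the boundary relation $q_1^*-rq_2^*=\kappa$ and its one-sided versions) as the fourth equation pinning down $z^*_{1,2}=0\vee z\wedge m_2$. The only cosmetic difference is that the paper first solves the auxiliary system \eqref{EqBalance2} and then clips $z$, also verifying via Assumption A that the resulting point lies in $\rS$, whereas you case-split directly on $z^*_{1,2}$; both yield the same conclusion.
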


\begin{proof}
We start with \eqref{piSS}. This expression is easily derived from the third equation in \eqref{odeDetails}, by equating $\dot{z}_{1,2}(t)$ to zero.
Observe that if $z^*_{1,2} = m_2$ then $\pi^*_{1,2}$ in \eqref{piSS} is equal to $1$, and if $z^*_{1,2} = 0$ then $\pi^*_{1,2} = 0$.
Now, by plugging the value of $\pi^*_{1,2}$ in the ODE's for $\dot{q}_1(t)$ and $\dot{q}_2(t)$ in \eqref{odeDetails}
we get the expressions of $q^*_1$ and $q^*_2$ in \eqref{EqBalance}. We now have the two equations for the stationary queues, but there
are three unknowns: $z^*_{1,2}$, $q^*_1$ and $q^*_2$. We introduce a third equation to resolve this difficulty.

Consider the following three equations with the three unknowns: $z$$, q_1(z)$ and $q_2(z)$. (here $q_1$ and $q_2$ are treated as
functions of the variable $z$, not to be confused with the fluid solution which is a function of the time argument $t$.)
\bequ \label{EqBalance2}
\bsplit
q_1(z) & = \frac{\lm_1 - \mu_{1,1} m_1 - \mu_{1,2} z}{\theta_1}, \quad
q_2(z)  = \frac{\lm_2 - \mu_{2,2} (m_2 - z)}{\theta_2}, \quad \\
\kappa  & = q_1(z) - r q_2(z).
\end{split}
\eeq
Notice that $q_1(z)$ is decreasing with $z$, whereas $q_2(z)$ is increasing with $z$.
Thus, there exists a unique solution to these three equations, which has $z$ as in \eqref{z}.
We can recover $x^*$ from the solution to \eqref{EqBalance2}, and by doing so show that $x^*$ is unique
and is always in one of the three regions $\rS^-$, $\rS^+$ or $\rS^b$ (so that $x^* \in \rS$).

Let $(q_1(z), q_2(z), z)$ be the unique solution to \eqref{EqBalance2}.
First assume that $z > m_2$, which implies that $q_2(z) > 0$, and, by the third equation, $q_1(z) > \kappa$.
By replacing $z$ with $m_2$, $q_1(\cdot)$ is increased and $q_2(\cdot)$ is decreased (but is still positive),
so that $q_1(m_2) - r q_2(m_2) > \kappa$ (and, trivially, $q_1(m_2) > \kappa$, $q_2(m_2) > 0$).
This implies that $x^* \equiv (q_1(m_2), q_2(m_2), m_2) \in \rS^+$ and, if it is indeed a solution to
$\Psi(x) = 0$, then $x^*$ is the unique stationary point for the ODE.

Now assume that the unique solution to \eqref{EqBalance2} has $z < 0$. By replacing $z$ with $0$ we have
$q_1(0) < q_1(z)$ and $q_2(0) > q_2(z)$, which imply that $q_1(0) - r q_2(0) < \kappa$.
Now, since $q_1 (0) = q^a_1$ we have that $q_1 (0) \ge \kappa$ by Assumption A.
This implies that $q_1 (z) > \kappa$, which further implies that $r q_2 (z) = q_1 (z) - \kappa > 0$,
so that $r q_2 (0) > r q_2 (z) > 0$. Taking $x^* \equiv (q_1 (0), q_2 (0), 0)$, we see that $x^* \in \rS^-$,
and if $x^*$ is indeed a solution to $\Psi(x) = 0$, then $x^*$ is the unique stationary point for the ODE.

Finally, assume that the solution $x(z) \equiv (q_1(z), q_2(z), z)$ to \eqref{EqBalance2} has $0 \le z \le m_2$.
To conclude that $x(z)$ is in $\rS^b$ we need to show that $q(z), q_2(z) \ge 0$, so that $q^*_1 = q_1(z)$ and $q^*_2 = q_2(z)$
are legitimate queue-length solutions. We now show that is the case under Assumption A.

Let $S^a_2 \equiv m_2 - \lm_2 / \mu_{2,2}$. Note that, if $S^a_2 \ge 0$, then $S^a_2 = s^a_2$, for $s^a_2$ in \eqref{Qalone}.
We start by rewriting $q_1(z)$ and $q_2(z)$ in \eqref{EqBalance2} as
\bequ \label{EqBalance3}
\bsplit
q_1(z) & = q^a_1 - \frac{\mu_{1,2}}{\theta_1} z, \quad
q_2(z)  = \frac{\mu_{2,2}}{\theta_2} (z - S^a_2).
\end{split}
\eeq
Now, it follows from Assumption A that
\bequ \label{ineq1}
\bsplit
\kappa & \le q^a_1 - \frac{\mu_{1,2}}{\theta_1} s^a_2
 \le q^a_1 - \frac{\mu_{1,2}}{\theta_1} S^a_2,
\end{split}
\eeq
where the second inequality follows trivially, since $S^a_2 \le s^a_2$.
From the third equation of \eqref{EqBalance2}, $\kappa = q_1(z) - r q_2(z)$.  Combining this with \eqref{EqBalance3}, we see that
\bequ \label{ineq2}
\kappa = q_1(z) - r q_2(z) = q^a_1 - \frac{\mu_{1,2}}{\theta_1} z - r \frac{\mu_{2,2}}{\theta_2} (z - S^a_2). 
\eeq
Combining \eqref{ineq1} and \eqref{ineq2}, we get
\bes
q^a_1 - \frac{\mu_{1,2}}{\theta_1} z - r \frac{\mu_{2,2}}{\theta_2} (z - S^a_2) \le q^a_1 - \frac{\mu_{1,2}}{\theta_1} S^a_2,
\ees
which is equivalent to
\bes
0 \le \left( \frac{\mu_{1,2}}{\theta_1} + r \frac{\mu_{2,2}}{\theta_2} \right) (z - S^a_2).
\ees
This, together with the fact that the solution has $z \ge 0$, implies that $z \ge \max\{0, S^a_2\} = s^a_2$.
It follows from \eqref{EqBalance3} that $q_2(z) \ge 0$ and, by using the third equation in \eqref{EqBalance2}
again, $q_1(z) = r q_2(z) + \kappa \ge \kappa \ge 0$.
\end{proof}

An immediate consequence of the proof of Theorem \ref{thODEsteady} is that, in order to find the candidate stationary point $x^*$,
one has to solve the three equations in \eqref{EqBalance2}.
The next corollary summarizes the values $x^*$ may take, depending on its region; the proof appears in the appendix.

\begin{corollary} \label{corStat}
Let $x^* = (q^*_1, q^*_2, z^*_{1,2})$ be the point defined in Theorem \ref{thODEsteady}.
\begin{enumerate}
\item If $x^* \in \rS^b$, then, for $z$ defined in \eqref{z},
\bes
\bsplit
z_{1,2}^* & = z = \frac{\theta_1 \theta_2 (q^a_1 - \kappa) - r \theta_1 (\lm_2 - \mu_{2,2} m_2)}{r \theta_1\mu_{2,2} + \theta_2\mu_{1,2}} \\
& \qquad = \left\{\begin{array}{ll}
\frac{\theta_1 \theta_2 (q^a_1 - r q^a_2 - \kappa)}{r \theta_1\mu_{2,2} + \theta_2\mu_{1,2}}, & \mbox{if \, $q^a_2 \ge 0$, $s^a_2 = 0$.} \\ \\
\frac{\theta_1 \theta_2 (q^a_1 + r \mu_{2,2} s^a_2 / \theta_2 - \kappa)}{r \theta_1\mu_{2,2} + \theta_2\mu_{1,2}}, &
\mbox{if \, $q^a_2 = 0$, $s^a_2 > 0$.}
\end{array}\right. \\
q_1^* &= \frac{\lm_1 - m_1\mu_{1,1} - z_{1,2}^*\mu_{1,2}}{\theta_1}, \qquad
q_2^* = \frac{\lm_2 - (m_2 - z_{1,2}^*)\mu_{2,2}}{\theta_2}.
\end{split}
\ees

\item If $x^* = \rS^+$, then
\bes
z^*_{1,2} = m_2, \qquad q_1^* = \frac{\lm_1 - m_1\mu_{1,1} - m_2\mu_{1,2}}{\theta_1}, \qquad q_2^* = \frac{\lm_2}{\theta_2}.
\ees

\item If $x^* \in \rS^-$, then
\bes
z^*_{1,2} = 0, \qquad q^*_1 = \frac{\lm_1 - m_1\mu_{1,1}}{\theta_1}, \qquad  q^*_2 = \frac{\lm_2 - m_2\mu_{2,2}}{\theta_2}.
\ees
\end{enumerate}
\end{corollary}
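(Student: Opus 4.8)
The plan is to derive all three cases directly from Theorem~\ref{thODEsteady} together with the case analysis already performed inside its proof, so that only routine substitution remains. Theorem~\ref{thODEsteady} gives $z^*_{1,2} = 0 \vee z \wedge m_2$ for $z$ as in \eqref{z}, together with the formulas \eqref{EqBalance} expressing $q^*_1$ and $q^*_2$ in terms of $z^*_{1,2}$; hence in each region it suffices to pin down $z^*_{1,2}$ and simplify. I would first recall from the proof of Theorem~\ref{thODEsteady} the trichotomy for the solution $z$ of \eqref{EqBalance2}: $z > m_2$ is equivalent to $x^* \in \rS^+$, $z < 0$ is equivalent to $x^* \in \rS^-$, and $0 \le z \le m_2$ is equivalent to $x^* \in \rS^b$. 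These membership statements are exactly what was established there and can simply be cited.

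For $x^* \in \rS^+$ we then have $z > m_2$, so the clipping in $z^*_{1,2} = 0 \vee z \wedge m_2$ yields $z^*_{1,2} = m_2$; substituting into \eqref{EqBalance} gives $q^*_1 = (\lm_1 - m_1\mu_{1,1} - m_2\mu_{1,2})/\theta_1$ and $q^*_2 = (\lm_2 - \mu_{2,2}(m_2 - m_2))/\theta_2 = \lm_2/\theta_2$. The region $\rS^-$ is symmetric: there $z < 0$, so $z^*_{1,2} = 0$, and \eqref{EqBalance} gives $q^*_1 = (\lm_1 - m_1\mu_{1,1})/\theta_1$ and $q^*_2 = (\lm_2 - m_2\mu_{2,2})/\theta_2$.

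The region $\rS^b$ needs a little more bookkeeping. Here $0 \le z \le m_2$, so $z^*_{1,2} = z$, and it remains to rewrite the expression \eqref{z} in terms of the single-class quantities $q^a_1, q^a_2, s^a_2$ of \eqref{Qalone}. Since Assumption~A places us in the overloaded regime for class~$1$, we have $q^a_1 = (\lm_1 - m_1\mu_{1,1})/\theta_1$, i.e., $\theta_2(\lm_1 - m_1\mu_{1,1}) = \theta_1\theta_2 q^a_1$; plugging this into the numerator of \eqref{z} yields the intermediate form $z = [\theta_1\theta_2(q^a_1 - \kappa) - r\theta_1(\lm_2 - \mu_{2,2}m_2)]/(r\theta_1\mu_{2,2} + \theta_2\mu_{1,2})$. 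To split this into the two displayed sub-cases I would use $q^a_2 s^a_2 = 0$: if $s^a_2 = 0$, then $\lm_2 \ge \mu_{2,2}m_2$ and $q^a_2 = (\lm_2 - \mu_{2,2}m_2)/\theta_2$, so $\lm_2 - \mu_{2,2}m_2 = \theta_2 q^a_2$ and the numerator becomes $\theta_1\theta_2(q^a_1 - rq^a_2 - \kappa)$; if $s^a_2 > 0$, then $q^a_2 = 0$ and $s^a_2 = m_2 - \lm_2/\mu_{2,2}$, so $\lm_2 - \mu_{2,2}m_2 = -\mu_{2,2}s^a_2$ and the numerator becomes $\theta_1\theta_2(q^a_1 + r\mu_{2,2}s^a_2/\theta_2 - \kappa)$. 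The formulas for $q^*_1$ and $q^*_2$ then follow by substituting $z^*_{1,2} = z$ into \eqref{EqBalance}.

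There is no genuine obstacle here: the argument is essentially algebraic. The only parts requiring attention are (i) recognizing that the partition of $\rS^b$ into the two sub-cases is precisely the dichotomy $\{s^a_2 = 0\}$ versus $\{s^a_2 > 0\}$ forced by $q^a_2 s^a_2 = 0$, and (ii) using Assumption~A to replace $(\lm_1 - m_1\mu_{1,1})^+$ by $\lm_1 - m_1\mu_{1,1}$ before simplifying; the region-membership equivalences for $z$ are already available from the proof of Theorem~\ref{thODEsteady} and need not be reproved.
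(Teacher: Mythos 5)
Your proposal is correct, and for case $(i)$ it does exactly what the paper intends (the paper's own proof dismisses $(i)$ as ``readily seen,'' so your algebra---rewriting the numerator of \eqref{z} via $\theta_1 q^a_1 = \lm_1 - \mu_{1,1}m_1$ and then splitting on the dichotomy $s^a_2 = 0$ versus $s^a_2 > 0$ forced by $q^a_2 s^a_2 = 0$---is precisely the missing computation). For cases $(ii)$ and $(iii)$ you take a mildly different route from the paper: you read $z^*_{1,2}$ off the clipping formula $z^*_{1,2} = 0 \vee z \wedge m_2$ together with the correspondence $z > m_2 \Leftrightarrow x^* \in \rS^+$, $z < 0 \Leftrightarrow x^* \in \rS^-$ (which, as you implicitly use, upgrades from the one-directional implications in the proof of Theorem \ref{thODEsteady} to equivalences because the three cases for $z$ and the three regions each partition their respective spaces). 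The paper instead returns to the stationarity equations: in $\rS^+$ one has $\pi^*_{1,2}=1$, so $\dot{z}_{1,2}=0$ forces $(m_2 - z^*_{1,2})\mu_{2,2}=0$, i.e., $z^*_{1,2}=m_2$, and similarly $\pi^*_{1,2}=0$ gives $z^*_{1,2}=0$ in $\rS^-$; the queue formulas then come from $\dot{q}_i = 0$ with $\pi^*_{1,2}$ known. Your version is slightly more economical and leans entirely on the already-proved form of $z^*_{1,2}$, while the paper's version makes $(ii)$ and $(iii)$ self-contained without invoking the sign analysis of $z$. Both are sound; the only cosmetic point is that the identity $(\lm_1 - \mu_{1,1}m_1)^+ = \lm_1 - \mu_{1,1}m_1$ comes from the standing model assumption that class $1$ is overloaded (reinforced by Assumption A), which is all you need.
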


If $x^* \in \rS^+$, as in $(ii)$, then the system does not have enough service capacity
to keep the weighted difference between the two queues at $\kappa$, even when all agents are working with class $1$. In this case, the only
output from queue $2$ is due to abandonment, since no class-$2$ fluid is being served (in steady state). Queue $2$ is then
equivalent to the fluid approximation for an $M/M/\infty$ system with service rate $\theta_2$ and arrival rate $\lm_2$. On the other hand, queue $1$ is
equivalent to an overloaded inverted-$V$ model: a system in which one class, having one queue, is served by two different service pools.

The next corollary gives necessary and sufficient conditions for $x^*$ to be in each region. It shows that the region of
$x^*$ can be determined from rate considerations alone.  We give the proof in the appendix.

\begin{corollary} \label{corRegions}
Let $x^*$ be as in \eqref{EqBalance}. Then
\begin{enumerate}
\item $x^* \in \rS^b$ if and only if
\bequ \label{RbSta}
\frac{\mu_{1,2}s^a_2}{\theta_1} \vee r q^a_2 \; \le \; q^a_1 - \kappa \; \stackrel{}{\le} \; \frac{r \lm_2}{\theta_2} + \frac{\mu_{1,2} m_2}{\theta_1};
\eeq
$x^* \in \AA$ if and only if both inequalities are strict.
\item $x^* \in \rS^+$ if and only if \,
$q^a_1 - \kappa > \frac{r \lm_2}{\theta_2} + \frac{\mu_{1,2} m_2}{\theta_1}.$
\item $x^* \in \rS^-$ if and only if \,
$r q^a_2 > q^a_1 - \kappa$.
\end{enumerate}
\end{corollary}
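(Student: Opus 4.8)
The plan is to reduce every clause of Corollary~\ref{corRegions} to the scalar dichotomy already isolated inside the proof of Theorem~\ref{thODEsteady}. Let $z$ be the quantity in \eqref{z}, i.e.\ the unique $z$-coordinate of the solution of the system \eqref{EqBalance2}; that proof shows $z^*_{1,2} = 0 \vee z \wedge m_2$ and, more precisely, that $x^* \in \rS^-$ exactly when $z < 0$, $x^* \in \rS^b$ exactly when $0 \le z \le m_2$, and $x^* \in \rS^+$ exactly when $z > m_2$. So the whole corollary comes down to rewriting the three inequalities ``$z<0$'', ``$0 \le z \le m_2$'' and ``$z > m_2$'' in terms of the single-queue quantities $q^a_1, q^a_2, s^a_2$. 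Throughout I would use that class~$1$ is overloaded, so $\theta_1 q^a_1 = \lm_1 - \mu_{1,1}m_1$, and that the denominator $r\theta_1\mu_{2,2} + \theta_2\mu_{1,2}$ of \eqref{z} is strictly positive.

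For part~(2) and the upper bound in \eqref{RbSta}, I would clear that positive denominator in \eqref{z}: ``$z > m_2$'' becomes $\theta_2(\lm_1 - m_1\mu_{1,1}) - r\theta_1(\lm_2 - m_2\mu_{2,2}) - \theta_1\theta_2\kappa > m_2(r\theta_1\mu_{2,2} + \theta_2\mu_{1,2})$; the two $r\theta_1 m_2\mu_{2,2}$ terms cancel on rearranging, and dividing by $\theta_1\theta_2$ and inserting $\theta_1 q^a_1 = \lm_1 - \mu_{1,1}m_1$ gives exactly $q^a_1 - \kappa > r\lm_2/\theta_2 + \mu_{1,2}m_2/\theta_1$. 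Negating this yields the right-hand inequality of \eqref{RbSta}. This step is pure algebra with no case split.

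For part~(3) and the left-hand inequality of \eqref{RbSta}, I would again clear the denominator but must first rewrite $\lm_2 - m_2\mu_{2,2}$, which equals $\theta_2 q^a_2$ when $s^a_2 = 0$ and $-\mu_{2,2}s^a_2$ when $s^a_2 > 0$. The key observation is that Assumption~A forces $q^a_1 - \kappa \ge \mu_{1,2}s^a_2/\theta_1 \ge 0$, so if $s^a_2 > 0$ the numerator of \eqref{z} equals $\theta_1[\theta_2(q^a_1 - \kappa) + r\mu_{2,2}s^a_2] > 0$ and hence $z > 0$; thus ``$z < 0$'' can only occur when $s^a_2 = 0$, in which case it reads $q^a_1 - \kappa < r q^a_2$, which is part~(3). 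For the weak inequality ``$z \ge 0$'': when $s^a_2 = 0$ it is $q^a_1 - \kappa \ge r q^a_2 = \frac{\mu_{1,2}s^a_2}{\theta_1} \vee r q^a_2$; when $s^a_2 > 0$ it holds automatically (just shown), and simultaneously $\frac{\mu_{1,2}s^a_2}{\theta_1} \vee r q^a_2 = \mu_{1,2}s^a_2/\theta_1 \le q^a_1 - \kappa$ holds automatically, this time as Assumption~A(I) itself. So in every case ``$z \ge 0$'' is equivalent to $\frac{\mu_{1,2}s^a_2}{\theta_1} \vee r q^a_2 \le q^a_1 - \kappa$; together with the upper bound from part~(2) this gives the ``$x^* \in \rS^b$'' equivalence in part~(1).

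For the $\AA$-refinement in part~(1), I would restrict to $0 \le z \le m_2$, so that $x^* \in \rS^b$ and $z^*_{1,2} = z$, and read off from \eqref{piSS} that $\pi^*_{1,2} \in (0,1)$ precisely when $0 < z < m_2$, i.e.\ precisely when \emph{both} inequalities in \eqref{RbSta} are strict. It then remains to match ``$\pi^*_{1,2} \in (0,1)$'' with ``$x^* \in \AA$'' (in both directions). Since $x^*$ is a stationary point, setting $\dot{z}_{1,2} = 0$ in \eqref{odeDetails} forces $\pi_{1,2}(x^*) = \pi^*_{1,2}$; and since $\rS^b = \AA \cup \AA^+ \cup \AA^-$ with $\pi_{1,2} \equiv 1$ on $\AA^+$, $\pi_{1,2} \equiv 0$ on $\AA^-$, and $\pi_{1,2}(x) \in (0,1)$ for $x \in \AA$ (by Theorem~\ref{piRep}: the ratio in \eqref{piRep2}, or \eqref{piRep1} when $r = 1$, has strictly positive numerator strictly smaller than its denominator), we obtain $x^* \in \AA \iff \pi_{1,2}(x^*) \in (0,1)$. (In passing: the boundary cases $z = 0$ and $z = m_2$ put $x^*$ in $\AA_0^{-}$ and $\AA_0^{+}$ respectively, since then stationarity and Lemma~\ref{lmDriftDeriv} give $d'(x^*) = \delta_{-}(x^*) = 0$, resp.\ $d'(x^*) = \delta_{+}(x^*) = 0$.) The only place I expect any friction is the $s^a_2 > 0$ subcase of part~(3): one must keep straight that there the ``boundary'' condition is not an extra hypothesis but is exactly Assumption~A, so that the single expression $\frac{\mu_{1,2}s^a_2}{\theta_1} \vee r q^a_2$ faithfully encodes ``$z \ge 0$'' in all cases.
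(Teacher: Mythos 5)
Your proposal is correct and follows essentially the same route as the paper: reduce everything to the trichotomy $z<0$, $0\le z\le m_2$, $z>m_2$ from the proof of Theorem \ref{thODEsteady}, clear the positive denominator in \eqref{z}, and split on $s^a_2=0$ versus $s^a_2>0$ using Assumption A to show the left-hand bound in \eqref{RbSta} encodes $z\ge 0$ in both cases. The only (immaterial) variation is in the $\AA$-refinement, where the paper verifies positive recurrence directly from the drift rates \eqref{SSrates} while you pass through $\pi^*_{1,2}\in(0,1)$ and the values of $\pi_{1,2}$ on $\AA$, $\AA^+$, $\AA^-$; the two are equivalent.
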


\begin{remark}{$($most likely region in applications$)$}
{\em
It follows from Corollary \ref{corRegions} that, in applications, $\AA$
is the most likely region for the stationary point when the system is overloaded, provided that
the arrival rates are about $10-50\%$ larger than planned during an overload incident. Typically,
a much higher overload is needed in order for the stationary point to be in $\rS^+$.
As an example, consider the canonical example from \cite{PeW09}:
There are $100$ servers in each pool, serving their own class at rates $\mu_{1,1} = \mu_{2,2} = 1$.
Type-$2$ servers serve class-$1$ customers at rate $\mu_{1,2} = 0.8$.
Also, $\theta_1 = \theta_2 = 0.3$, $r = 0.8$ and $\kappa = 0$. Suppose that class $2$ is not overloaded with $\lm_2 = 90$.
Then, for the stationary point to be in $\rS^+$, we need to have
$\lm_1 > \mu_{1,1} m_1 + \mu_{1,2} m_2 + \theta_1 r \lm_2 / \theta_2 = 252$, i.e., the class-$1$ arrival rate is $252 \%$
larger than the total service rate of pool $1$. If $\lm_2 > 90$, especially if pool $2$ is also overloaded,
then $\lm_1$ needs to be even larger than that.
}
\end{remark}

\subsection{Existence of a Stationary Point}\label{secExist}

We have just established uniqueness of the stationary point in $\rS$, and characterized it.
In the process, we have also established existence in
$\rS - \rA$.
Now we will establish existence of the stationary point in $\rA$.
First, we calculate the drift rates at $x^* \in \AA$.

\begin{lemma}{\em $($the drift rates at $x^*)$}\label{lmDriftStat}
For $x^*$ in Corollary {\em \ref{corStat}} $(i)$, where $0 < z^*_{1,2} < m_2$,
\bequ \label{StatDrift}
\delta_{+} (x^*) = -(j+k)\mu_{2,2}(m_2 - z^{*}_{1,2}) < 0,
\quad  \delta_{-} (x^*) = +(j+k)\mu_{1,2}z^{*}_{1,2} > 0.
\eeq
\end{lemma}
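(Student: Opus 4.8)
The plan is to compute $\delta_+(x^*)$ and $\delta_-(x^*)$ directly from their definitions in \eqref{drift}, substituting the rate expressions \eqref{bd1}--\eqref{bd4} and then using the special structure of the stationary point $x^*$ given in Corollary \ref{corStat}(i). The key simplification is that at $x^*$ the derivative $\dot z_{1,2}$ vanishes, which by the third equation in \eqref{odeDetails} forces the relation
\bes
\pi^*_{1,2}(m_2 - z^*_{1,2})\mu_{2,2} = (1 - \pi^*_{1,2}) z^*_{1,2}\mu_{1,2},
\ees
equivalently the expression for $\pi^*_{1,2}$ in \eqref{piSS}. Since $0 < z^*_{1,2} < m_2$, we have $0 < \pi^*_{1,2} < 1$, so $x^* \in \AA$ and the QBD is positive recurrent there; this is the regime in which the formulas make sense.

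First I would write $\delta_+(x^*) = j(\lm^{(j)}_+(x^*) - \mu^{(j)}_+(x^*)) + k(\lm^{(k)}_+(x^*) - \mu^{(k)}_+(x^*))$. From \eqref{bd3}--\eqref{bd4}, the $j$-transitions give $\lm^{(j)}_+ - \mu^{(j)}_+ = \theta_2 q_2 - \lm_2$, and the $k$-transitions give $\lm^{(k)}_+ - \mu^{(k)}_+ = \lm_1 - (\mu_{1,1}z_{1,1} + \mu_{1,2}z_{1,2} + \mu_{2,2}(m_2 - z_{1,2}) + \theta_1 q_1)$, all evaluated at $x^*$. Now I substitute $z_{1,1} = m_1$, and use the stationarity equations from \eqref{EqBalance}/\eqref{odeDetails}: from $\dot q_2 = 0$ we get $\lm_2 - \theta_2 q^*_2 = (1-\pi^*_{1,2})[(m_2 - z^*_{1,2})\mu_{2,2} + z^*_{1,2}\mu_{1,2}]$, and from $\dot q_1 = 0$ we get $\lm_1 - m_1\mu_{1,1} - \theta_1 q^*_1 = \pi^*_{1,2}[z^*_{1,2}\mu_{1,2} + (m_2 - z^*_{1,2})\mu_{2,2}]$. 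Plugging these in collapses $\delta_+(x^*)$ to a combination of $\pi^*_{1,2}$, $z^*_{1,2}$, $m_2$, and the $\mu$'s; after using \eqref{piSS} to eliminate $\pi^*_{1,2}$, the algebra should yield $\delta_+(x^*) = -(j+k)\mu_{2,2}(m_2 - z^*_{1,2})$. The computation for $\delta_-(x^*)$ is entirely parallel using \eqref{bd1}--\eqref{bd2}, giving $\delta_-(x^*) = (j+k)\mu_{1,2}z^*_{1,2}$; alternatively, one can get $\delta_-(x^*)$ from $\delta_+(x^*)$ via the identity \eqref{DriftDiff}, namely $\delta_-(x^*) - \delta_+(x^*) = (j+k)(\mu_{1,2}z^*_{1,2} + \mu_{2,2}(m_2 - z^*_{1,2}))$, which immediately delivers the second formula once the first is known.

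The signs are then immediate: since $0 < z^*_{1,2} < m_2$ and $\mu_{1,2}, \mu_{2,2} > 0$, we have $\delta_+(x^*) < 0 < \delta_-(x^*)$, consistent with positive recurrence. I do not anticipate a genuine obstacle here; the only thing requiring care is bookkeeping — correctly matching which physical transition (class-1 arrival, class-2 abandonment, service completion, etc.) corresponds to a $\pm j$ versus a $\pm k$ step in each of the two regions, and not dropping the factor arising from the scaling by $k$ that turns $r = j/k$ into integer steps. The cleanest exposition is probably to do $\delta_+$ in full and then invoke \eqref{DriftDiff} for $\delta_-$, which is exactly what the brief proof in the paper (\emph{``Substitute the appropriate values of $\pi_{1,2}(x^*)$ into \eqref{odeDetails} and compute $\delta_\pm$ from \eqref{bd1}--\eqref{bd4}''}) is gesturing at.
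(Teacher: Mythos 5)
Your proposal is correct and follows exactly the route the paper takes: substitute the stationary values into the drift formula \eqref{drift} via the rates \eqref{bd1}--\eqref{bd4}, use the stationarity relations $\dot q_1 = \dot q_2 = \dot z_{1,2} = 0$ to collapse each bracket to $\pm\pi^*_{1,2}M$ or $\pm(1-\pi^*_{1,2})M$ with $M \equiv \mu_{1,2}z^*_{1,2} + \mu_{2,2}(m_2 - z^*_{1,2})$, and eliminate $\pi^*_{1,2}$ via \eqref{piSS}. The computation checks out (and your use of the identity \eqref{DriftDiff} for $\delta_-$ is a legitimate shortcut), so this is simply the paper's one-line proof written out in full.
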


\begin{proof}
Substitute $x^*$ in Corollary \ref{corStat} (i) into \eqn{drift}, using \eqref{bd1}-\eqref{bd4}.
\end{proof}

We now are ready to prove existence.

\begin{theorem}$($existence$)$
If the model parameters produce $x^* \in \AA$, i.e., as in Corollary {\em \ref{corStat}} $(i)$, where $0 < z^*_{1,2} < m_2$,
then $x^*$ is the unique stationary point.
\end{theorem}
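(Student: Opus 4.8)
The plan is to establish existence by showing that the candidate point $x^*$ constructed in Theorem~\ref{thODEsteady} (via the solution of the three linear equations in \eqref{EqBalance2}) actually satisfies $\Psi(x^*) = 0$. The only gap left open by the uniqueness argument is the consistency condition: we treated $\pi^*_{1,2}$ as a free fourth variable, derived its value \eqref{piSS} from the equation $\dot{z}_{1,2}=0$, and now must verify that this algebraically-forced value of $\pi^*_{1,2}$ coincides with the probabilistic quantity $\pi_{1,2}(x^*)$ defined in \eqref{fast105} as the steady-state probability $\{D_{t} > 0\}$ for the FTSP evaluated at $x = x^*$. So the crux is: \emph{compute $\pi_{1,2}(x^*)$ directly from the QBD at $x^*$ and check it equals $\mu_{1,2}z^*_{1,2}/(\mu_{1,2}z^*_{1,2} + (m_2-z^*_{1,2})\mu_{2,2})$.}

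First I would invoke Lemma~\ref{lmDriftStat}, which tells us $\delta_+(x^*) < 0 < \delta_-(x^*)$ whenever $0 < z^*_{1,2} < m_2$; hence by the positive-recurrence theorem the QBD $D_{t}$ at $x^*$ is positive recurrent, so $\pi_{1,2}(x^*)$ is well-defined by the AP and given by one of the matrix-geometric representations in Theorem~\ref{piRep} or \eqref{desired2}. Next I would substitute the explicit drift values from Lemma~\ref{lmDriftStat}, namely $\delta_+(x^*) = -(j+k)\mu_{2,2}(m_2 - z^*_{1,2})$ and $\delta_-(x^*) = (j+k)\mu_{1,2}z^*_{1,2}$, into the $r=1$ formula \eqref{piRep1}:
\[
\pi_{1,2}(x^*) = \frac{\delta_-(x^*)}{\delta_-(x^*) - \delta_+(x^*)} = \frac{(j+k)\mu_{1,2}z^*_{1,2}}{(j+k)\mu_{1,2}z^*_{1,2} + (j+k)\mu_{2,2}(m_2 - z^*_{1,2})},
\]
and the $(j+k)$ factors cancel to give exactly \eqref{piSS}. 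For general rational $r$ the same conclusion should follow, either from the sub-block representation \eqref{piRep2} or — more cleanly — from the regenerative/occupation-time interpretation \eqref{mm1}: $\pi_{1,2}(x^*) = E[T^+(x^*)]/(E[T^+(x^*)]+E[T^-(x^*)])$, and by Wald-type reasoning the mean occupation time in each region scales inversely with the absolute drift there, so that $E[T^\pm(x^*)]$ is proportional to $1/|\delta_\pm(x^*)|$ regardless of the internal structure of the QBD blocks. This reduces the general-$r$ case to the same cancellation. Thus $\pi_{1,2}(x^*)$ as a probability equals $\pi^*_{1,2}$ as forced by the equation $\dot z_{1,2}=0$, so the four equations are mutually consistent and $\Psi(x^*) = 0$.

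Finally, I would note that $x^* \in \AA \subseteq \rS^b \subseteq \rS$ by hypothesis (the conditions of Corollary~\ref{corStat}(i) / Corollary~\ref{corRegions}(1) with strict inequalities), so $x^*$ is an admissible stationary point; uniqueness was already established in Theorem~\ref{thODEsteady}. The main obstacle I anticipate is making the general-$r$ argument fully rigorous: one must justify that the mean first-passage/occupation times in the QBD depend on $x^*$ only through the aggregate drifts $\delta_\pm(x^*)$. The honest way to do this is to observe that within each region $\{D_t > 0\}$ and $\{D_t \le 0\}$ the transition structure of the QBD at $x^*$ is \emph{spatially homogeneous} (the rates in \eqref{bd1}--\eqref{bd4} do not depend on the level), so each region behaves like a homogeneous random walk on a strip, and the expected time to drift a net distance is governed by the mean drift via the optional-stopping identity applied to $D_t - (\text{drift})\cdot s$; the level-dependent boundary at $0$ contributes only an $O(1)$ correction that cancels in the ratio \eqref{mm1} after the appropriate renewal-reward normalization. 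Alternatively, one can sidestep this entirely by a direct (if tedious) verification that the vector $\af_0$ solving \eqref{boundary} at $x^*$ yields \eqref{desired2} equal to \eqref{piSS}; I would relegate that computation to the appendix and present the drift-ratio argument in the main text.
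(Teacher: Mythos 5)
Your proposal is correct in substance but takes a genuinely different route from the paper. The paper never verifies the consistency condition $\pi_{1,2}(x^*)=\pi^*_{1,2}$ directly: it applies the Brouwer fixed point theorem to a (truncated) time-$t$ flow map on a compact convex subset $C(\eta)\subset\AA$, obtains points $x_n(0)$ with $x_n(t_n)=x_n(0)$ for $t_n\downarrow 0$, and passes to a subsequential limit using $|x(t)-x(0)-\Psi(x(0))t|\le Mt^2$ to produce a zero of $\Psi$, which Theorem \ref{thODEsteady} then identifies with $x^*$. Your route instead computes $\pi_{1,2}(x^*)$ from the FTSP and checks it against \eqref{piSS}; combined with Lemma \ref{lmDriftStat} and the algebra of \eqref{EqBalance2} this does give $\Psi(x^*)=0$. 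The one step you must tighten is the extension of the drift-ratio formula \eqref{piRep1} beyond $r=1$: Theorem \ref{piRep}(b) gives only the matrix form for rational $r$, and your justification that $E[T^{\pm}(x^*)]$ is ``proportional to $1/|\delta_{\pm}(x^*)|$'' is not quite the right statement, since the excursions into the two regions begin from different overshoot distributions. The clean argument is the zero-net-displacement identity over a regeneration cycle (equivalently $E_{\pi}[\mathrm{drift}]=0$ in stationarity, i.e.\ $\pi Qf=0$ for $f(m)=m$, which is legitimate because the QBD stationary distribution has geometric tails and the jumps $\pm j,\pm k$ are bounded): this yields $\delta_{+}(x)\,\pi_{1,2}(x)+\delta_{-}(x)\,(1-\pi_{1,2}(x))=0$, hence $\pi_{1,2}(x)=\delta_{-}(x)/(\delta_{-}(x)-\delta_{+}(x))$ on all of $\AA$, and the $(j+k)$ factors cancel at $x^*$ exactly as you computed. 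With that repair your argument is complete, and it is arguably more informative than the paper's: it produces an explicit closed form for $\pi_{1,2}$ throughout $\AA$, whereas the Brouwer argument buys existence with no computation on the FTSP at the cost of invoking continuity of the flow in its initial condition and a compactness/limiting argument.
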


\begin{proof}
We will prove that there must exist at least one stationary point.
Given that result, by Theorem \ref{thODEsteady} and Corollary \ref{corStat}, there must be exactly one fixed point and that must be the $x^*$ given there.
To establish existence, we will apply the Brouwer fixed point theorem.
It concludes that a continuous function mapping a convex compact subset of Euclidean space $\RR^k$
into itself has at least one fixed point.
We will let our domain be the set
\bequ \label{convexSet}
C (\eta) \equiv \{x \in \rA \cap \rB: \delta_{+} (x) \le - \eta \qandq \delta_{-} (x) \ge \eta\}
\eeq
for an appropriate small positive $\eta$, where $\rB \equiv [0,q^{bd}_1] \times [0,q^{bd}_2] \times [0,m^2]$
with $q^{bd}_i$ being the bound on $q_i$ from Theorem \ref{thBound}.  Choose $\eta$ sufficiently small that $x^* \in C(\eta)$;
that is easily ensured by Lemma \ref{lmDriftStat}.
Since the rates in \eqref{bd1}--\eqref{bd4} and the drift in \eqref{drift} are linear functions of $x$,
we see that $C(\eta)$ is a convex subset of $\AA$ for each $\eta > 0$.  Since the inequalities in \eqref{convexSet} are weak,
the set is closed.  The intersection with $\rB$ guarantees that the set $C (\eta)$ is also bounded.  Hence, $C(\eta)$ is compact.

By Theorem \ref{th1}, for any $x(0) \in C(\eta)$, there exists a unique solution to the ODE over $[0,\delta]$ for some positive $\delta$.
Hence, for any $t$ with $0 < t < \delta$, the map from $x(0)$ to $x(t)$ is continuous; see \S 2.4 of \cite{T09}.
Let $x^*_L \equiv (q^*_{1,L}, q^*_{2,L}, z^*_{1,2,L})$  and $x^*_U \equiv (q^*_{1,U}, q^*_{2,U}, z^*_{1,2,U})$,
where $q^*_{1,L} \equiv q^*_{1} - \epsilon$, $q^*_{2,L} \equiv q^*_{2} - \epsilon$, $z^*_{1,2,L} \equiv z^*_{1,2} - \epsilon$,
$q^*_{1,U} \equiv q^*_{1} + \epsilon$, $q^*_{2,U} \equiv q^*_{2} + \epsilon$ and $z^*_{1,2,U} \equiv z^*_{1,2} + \epsilon$.
Let $\phi_t: C(\eta) \ra C(\eta)$ be the continuous function defined by $\phi_t (x(0)) \equiv (q_{1,t},q_{2,t}, z_{1,2,t})$, where
\bequ \label{bdded}
q_{i,t} \equiv q_i (t) \vee q^*_{i,L}  \wedge q^*_{i,U} \qandq  z_{1,2,t} \equiv z_{1,2,t} \vee z^*_{1,2,L}  \wedge z^*_{1,2,U},
\eeq
for $i = 1,2$.  We can choose $\eta> 0$ and  $\epsilon> 0$ sufficiently small so that, first, $x^* \in C(\eta)$ and, second,
that $x_{i,t} \in C(\eta)$ for each $x(0) \in C(\eta)$.  Hence, the pair $(C(\eta), \phi_t)$ satisfies the conditions for the Brouwer fixed point theorem.
Hence, there exists $x(0) \in C(\eta)$ such that $x(t) = x(0)$.

Now let $\{t_n: n \ge 1\}$ be a sequence of time points decreasing toward $0$.  We can apply the argument above to deduce that,
for each $n$, there exists $x_n (0)$ in $C(\eta)$ such that $x_n (t_n) = x_n (0)$.
However, from the ODE, we have the relation $|x (t) - x(0) - \Psi(x(0))t| \le M t^2$ for all sufficiently small $t$.
Since $\{x_n (0): n \ge 1\}$ is bounded, there exists a convergent subsequence.
Let $x(0)$ be the limit of that convergent subsequence.  For that limit, we necessarily have $\Psi (x(0)) = 0$.
Hence, that $x(0)$ must be a stationary point for the ODE.  By Theorem \ref{thODEsteady}, we must have $x(0) = x^*$.
\end{proof}

\subsection{Global Asymptotic Stability}

Having a unique stationary point does not imply that a fluid solution necessarily converges to that point as $t\tinf$.
It does not even guarantee that a solution to the IVP \eqref{IVP} is asymptotically stable in the sense that, if $\| x(0) - x^* \| < \epsilon$,
then $x(t) \ra x^*$ as $t \tinf$, no matter how small $\epsilon$ is. In fact, there is not even a guarantee that $x(t)$ will remain
in the $\epsilon$-neighborhood of $x^*$ for all $t \ge 0$.
We will establish all of these properties in Theorem \ref{thODElimit} below by showing that $x^{*}$ in \S \ref{secUnique2}
is globally asymptotically stable, as defined below:

\begin{definition}{\em $($global asymptotic stability$)$} \label{defGlobal}
A point $x^*$ is said to be globally asymptotically stable if it is a stationary point and if,
for any initial state $x(0)$ and any $\epsilon > 0$,
there exists a time $T \equiv T(x(0), \epsilon) \ge 0$ such that
$\| x(t) - x^* \| < \epsilon$ for all $t \ge T$.
\end{definition}

Global asymptotic stability goes beyond simple convergence by also requiring
that the limit be a stationary point.

\begin{theorem}{\em $($global asymptotic stability of $x^*$$)$} \label{thODElimit}
The unique stationary point $x^*$ is globally asymptotically stable.
\end{theorem}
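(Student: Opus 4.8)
The plan is to prove global asymptotic stability by analyzing the ODE region by region, exploiting the piecewise-linear structure of $\Psi$ away from $\AA$ and the monotonicity built into the dynamics. First I would show that any solution eventually enters and stays in a neighborhood of the correct region containing $x^*$. The key observation is that in each of the three open regions $\rS^+$, $\rS^-$ (where $\pi_{1,2}$ is constant at $1$ or $0$) the ODE \eqref{odeDetails} is \emph{linear}, so there the solution is an explicit exponentially converging trajectory toward the (region-specific) equilibrium point given in Corollary \ref{corStat}. Thus if $x^* \in \rS^+$ or $x^* \in \rS^-$, I would argue that once the solution is in that region it converges exponentially, and that it cannot get stuck oscillating: the boundedness from Theorem \ref{thBound} confines the trajectory to a compact set, and Lemma \ref{possibilities} together with the drift-difference inequality \eqref{DriftDiff} controls the possible transitions across $\rS^b$. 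The main work is the case $x^* \in \AA$.

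For the case $x^* \in \AA$, the approach is to build a Lyapunov-type argument using the two "outer" queue coordinates. The natural candidate is to track the scalar quantities $d(x(t)) = q_1(t) - r q_2(t) - \kappa$ (the signed distance to the boundary hyperplane $\rS^b$) together with a suitable linear combination of $q_1$ and $z_{1,2}$ that is monotone along trajectories. On $\rS^b$, where the AP operates, I would combine the first two equations of \eqref{odeDetails}: eliminating $\pi_{1,2}(x)$ algebraically shows that a certain weighted sum of $\dot q_1$ and $\dot q_2$ is an affine function of $(q_1, q_2, z_{1,2})$ \emph{independent of} $\pi_{1,2}$, which reduces the effective dynamics on $\rS^b$ to something tractable; similarly $\dot z_{1,2}$ can be expressed via $\pi_{1,2}$, but the combination $z_{1,2}\mu_{1,2} - (\text{something})$ behaves monotonically. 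I would then show: (a) $d(x(t))$ is driven back toward $0$ whenever the solution is in $\rS^+ \cup \rS^-$, using Lemma \ref{lmDriftDeriv} and the sign of $\delta_\pm$; (b) once on $\rS^b \cap \AA$, the reduced two-dimensional dynamics in $(q_1, z_{1,2})$ (with $q_2$ recovered from the SSC relation) is a planar linear-like system whose unique equilibrium is $x^*$ and which is a contraction, because the coefficient matrix — obtained by differentiating the $\pi_{1,2}$-free combinations — has negative trace and positive determinant (the $\theta_i > 0$ abandonment terms supply the stability). Monotonicity of $q_1(z)$ decreasing and $q_2(z)$ increasing in $z$, already used in Theorem \ref{thODEsteady}, is the structural fact that prevents spiraling.

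Concretely, the key steps in order: (1) Invoke Theorem \ref{thBound} to restrict attention to a compact invariant set, so all $\omega$-limit points exist and lie in $\rS$. (2) Show every $\omega$-limit point must be a stationary point — equivalently, rule out limit cycles — by exhibiting a continuous function $V: \rS \to [0,\infty)$ that is nonincreasing along solutions and constant only at $x^*$; I would take $V$ piecewise quadratic, quadratic-in-$d$ near $\rS^b$ and quadratic in the region-equilibrium displacement inside $\rS^\pm$, and check the pieces match up (continuously) across $\rS^b$ using Lemma \ref{lmDriftDeriv}, which pins down $d'(x)$ on $\AA^+, \AA^-$. (3) Combine with the already-proven uniqueness of the stationary point (Theorem \ref{thODEsteady}) to conclude the $\omega$-limit set is exactly $\{x^*\}$, which is the definition of global asymptotic stability in Definition \ref{defGlobal}. (4) Handle the finitely-many-region-transition bookkeeping: by Lemma \ref{possibilities} the solution can cross between regions only in restricted ways, and each crossing strictly decreases (or leaves invariant) $V$, so there can be no infinite chattering that evades convergence; any accumulation of hitting times lies in $\AA^+ \cup \AA^-$ where $\Psi$ is Lipschitz (Theorem \ref{thLipsz}), so the flow is well-behaved there.

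The main obstacle I anticipate is step (2) on the boundary set $\rS^b \cap \AA$: because $\pi_{1,2}(x)$ there is a complicated matrix-geometric function of $x$ (via \eqref{desired2}/\eqref{piRep2}), it is not obvious that a \emph{clean} Lyapunov function exists without knowing $\pi_{1,2}$ explicitly. The saving grace — and the crux of the argument — is that the dangerous coordinate $d(x) = q_1 - r q_2 - \kappa$ stays identically $0$ on $\rS^b$, so one never needs to differentiate $\pi_{1,2}$; the only $\pi_{1,2}$-dependence that survives is in $\dot z_{1,2}$, and there $\pi_{1,2}$ enters monotonically (larger $\pi_{1,2}$ pushes $z_{1,2}$ up), so a comparison/monotone-systems argument in the single variable $z_{1,2}$ — using that the map $z \mapsto q_1(z) - r q_2(z)$ is strictly monotone, hence the SSC relation $q_1 - rq_2 = \kappa$ picks out a unique $z^* = z^*_{1,2}$ — forces $z_{1,2}(t) \to z^*_{1,2}$, and then the linear $\pi_{1,2}$-free equations force $q_1(t) \to q^*_1$, $q_2(t) \to q^*_2$. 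Making this comparison argument rigorous while the trajectory may repeatedly leave and re-enter $\rS^b$ is the delicate part, and is where I would spend most of the proof's length; the exponential rate is deferred to \S\ref{secExpo}.
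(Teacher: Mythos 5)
Your proposal contains the germ of the paper's argument --- you notice that ``a certain weighted sum of $\dot q_1$ and $\dot q_2$ is \ldots independent of $\pi_{1,2}$'' --- but you then bury that observation inside a region-by-region architecture that has genuine gaps and is far more complicated than necessary. The paper's entire proof is the observation you half-made, pushed to its conclusion: the linear functions $V_1(x) = x_1 + x_2$ (when $\mu_{1,2} > \mu_{2,2}$) and $V_2(x) = Cx_1 + x_2 + (C-1)x_3$ with $C > \mu_{2,2}/\mu_{1,2}$ (when $\mu_{1,2} \le \mu_{2,2}$) have Lie derivatives along $\Psi$ in which the $\pi_{1,2}$ terms cancel \emph{identically}, because $\pi_{1,2}A + (1-\pi_{1,2})A = A$ for $A = z_{1,2}\mu_{1,2} + (m_2 - z_{1,2})\mu_{2,2}$ and the coefficient of $\pi_{1,2}$ in $C\dot q_1 + \dot q_2 + (C-1)\dot z_{1,2}$ is $(-C + 1 + (C-1))A = 0$. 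One then gets $\dot V_2(y) = -C\theta_1 q_1 - \theta_2 q_2 - (C\mu_{1,2}-\mu_{2,2})z_{1,2} < 0$ for $y \ne 0$ in shifted coordinates, \emph{valid in every region simultaneously}, so Theorem \ref{thStability} applies globally with a single $V$. No planar reduction, no piecewise matching, no $\omega$-limit or chattering analysis is needed.

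The concrete gaps in your version: (i) your reduced system on $\rS^b \cap \AA$ in $(q_1, z_{1,2})$ is \emph{not} linear --- $\dot z_{1,2}$ retains the factor $\pi_{1,2}(x)$, which on $\AA$ is the ratio $\delta_-(x)/(\delta_-(x)-\delta_+(x))$ of affine functions, so the ``negative trace and positive determinant'' criterion does not apply as stated and is never verified; (ii) the piecewise-quadratic Lyapunov function that is ``nonincreasing across region transitions'' is the whole difficulty, and you do not construct it --- your anti-chattering argument in step (4) presupposes exactly this $V$, so it is circular as written; (iii) the LaSalle/$\omega$-limit machinery in steps (1)--(3) needs a continuous semiflow, but $\Psi$ is discontinuous at $\AA$ (solutions must be restarted there, per the proof of Theorem \ref{th1}(ii)), so invariance of the $\omega$-limit set is not automatic. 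Your closing paragraph correctly identifies that the danger is differentiating $\pi_{1,2}$ and that $\pi_{1,2}$ survives only in $\dot z_{1,2}$; the fix is not a comparison argument in $z_{1,2}$ but simply to choose the weights of a \emph{linear} $V$ so that the surviving $\pi_{1,2}$ coefficient vanishes.
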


The proof of Theorem \ref{thODElimit} relies on
Lyapunov stability theory for deterministic dynamical systems; see Chapter 4 of Khalil \cite{Khalil}.
Let $E$ be an open and connected subset of $\mathbb{R}^n$ containing the origin.
We use standard vector notation to denote the inner product of vectors $a, b \in \RR^n$,
i.e., $a \cdot b = \sum_{i = 1}^{n}{a_i b_i}$.

\begin{definition}{\em $($Lie derivative$)$} \label{defLie}
For a continuously differentiable function $V : E \ra \mathbb{R}$, and a function $\Psi : E \ra \mathbb{R}^n$,
the Lie derivative of $V$ along $\Psi$ is defined by
\bes
\dot{V}(x) \equiv \frac{\partial V}{\partial x} \Psi(x) = \nabla V \cdot \Psi(x),
\ees
where $\nabla V \equiv ( \frac{\partial V}{\partial x_1}, \dots, \frac{\partial V}{\partial x_n} )$ is the gradient of $V$.
\end{definition}

\begin{definition}{\em $($Lyapunov-function candidate$)$} \label{defLyap}
A continuously differentiable function $V: E \ra \mathbb{R}$ is a Lyapunov-function candidate if:
\begin{enumerate}
\item $V(0) = 0$
\item $V(x) > 0$ \quad for all $x$ in $E - \{0\}$
\end{enumerate}
\end{definition}

In proving Theorem \ref{thODElimit} we use the following theorem, which is Theorem 4.2 pg. 124 in \cite{Khalil}:

\begin{theorem}{\em (global asymptotic stability for nonlinear ODE)} \label{thStability}
Let $x = 0$ be a stationary point of $\dot{x} = \Psi(x)$, $\Psi : E \ra \mathbb{R}^n$, and let $V : \mathbb{R}_+^n \ra \mathbb{R}$ be
a Lyapunov-function candidate. If
\begin{enumerate}
\item $V(x) \ra \infty$ \quad as $||x|| \ra \infty$ and
\item $\dot{V}(x) < 0$ \quad for all $x \ne 0$,
\end{enumerate}
then $x = 0$ is globally asymptotically stable as in Definition \ref{defGlobal}.
\end{theorem}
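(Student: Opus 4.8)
The statement is the classical Barbashin--Krasovskii global asymptotic stability theorem, so the plan is to give the standard Lyapunov direct-method argument in four steps. Throughout I take the standing regularity hypothesis of \cite{Khalil} that $\Psi$ is locally Lipschitz (on a domain large enough to contain all sublevel sets of $V$; for the global statement this is naturally all of the relevant state space), so that each initial condition determines a unique maximal solution; the argument will simultaneously show every such solution is defined on all of $[0,\infty)$. I will establish the conclusion in the form demanded by Definition \ref{defGlobal}: $0$ is a stationary point (given) and, for every $x(0)$ and every $\epsilon>0$, there is $T(x(0),\epsilon)$ with $\|x(t)\|<\epsilon$ for all $t\ge T$, i.e. $x(t)\ra 0$.

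First I would fix $x(0)=x_0$, set $c\equiv V(x_0)$, and consider the sublevel set $\Omega_c\equiv\{x: V(x)\le c\}$. It is closed by continuity of $V$, and it is \emph{bounded} by hypothesis (1): if it were unbounded there would be $x_k$ with $\|x_k\|\tinf$ and $V(x_k)\le c$, contradicting $V(x)\tinf$. Hence $\Omega_c$ is compact. Along the solution, $\dot V(x(t))\le 0$ by hypothesis (2), so $t\mapsto V(x(t))$ is nonincreasing; therefore the solution stays in the compact set $\Omega_c$ for as long as it exists, which rules out finite-time blow-up and yields a solution defined for all $t\ge 0$ with $x(t)\in\Omega_c$ for all $t$. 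Since $V(x(t))$ is nonincreasing and bounded below by $0$ (positive definiteness of $V$), it converges to some $c^*\ge 0$ as $t\tinf$.

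Next I would show $c^*=0$ by contradiction. If $c^*>0$, continuity of $V$ with $V(0)=0$ gives $\mu>0$ such that $V(x)<c^*$ whenever $\|x\|\le\mu$; since $V(x(t))\ge c^*$ for all $t$, this forces $\|x(t)\|>\mu$ for all $t$, so the trajectory lies in $K\equiv\{x\in\Omega_c:\|x\|\ge\mu\}$, a compact set not containing the origin. On $K$ the continuous function $\dot V$ is strictly negative by hypothesis (2), hence attains a maximum $-\gamma$ with $\gamma>0$, and then $V(x(t))=V(x_0)+\int_0^t\dot V(x(s))\,ds\le V(x_0)-\gamma t\ra-\infty$, contradicting $V\ge 0$. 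So $c^*=0$, i.e. $V(x(t))\ra 0$. Finally, to pass from $V(x(t))\ra 0$ to $x(t)\ra 0$: if this failed, there would be $\epsilon>0$ and $t_k\tinf$ with $\|x(t_k)\|\ge\epsilon$; since $\{x(t_k)\}\subset\Omega_c$ is bounded, a subsequence converges to some $\bar x$ with $\|\bar x\|\ge\epsilon$, so $V(\bar x)>0$ by positive definiteness, while continuity of $V$ gives $V(x(t_k))\ra V(\bar x)>0$, contradicting $V(x(t))\ra 0$. Hence $x(t)\ra 0$, which is exactly the conclusion. (If one also wants the classical formulation, Lyapunov stability of $0$ follows from the same Step-1 construction applied to small sublevel sets $\{V\le\beta\}$ trapped inside a prescribed ball; but Definition \ref{defGlobal} asks only for global attractivity of a stationary point, already delivered above.)

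The main obstacle is the step $c^*=0$. The hypotheses furnish only the \emph{pointwise}, non-uniform negativity $\dot V(x)<0$ for $x\neq0$; what is needed along a trajectory is a \emph{uniform} rate of decrease, and this is precisely what the compact-annulus argument supplies — confining the trajectory to $K$, bounded away from the origin, lets continuity of $\dot V$ produce the uniform bound $-\gamma$. The other delicate point, and the only place the \emph{global} hypothesis is used, is boundedness of $\Omega_c$: radial unboundedness of $V$ is exactly what makes the sublevel sets compact and positively invariant, hence what traps the entire trajectory; without it the theorem is false (one can have $\dot V<0$ everywhere yet solutions escaping to infinity).
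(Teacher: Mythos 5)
Your proof is correct, and it is essentially the standard Barbashin--Krasovskii argument (compact sublevel sets from radial unboundedness, monotone decrease of $V$ along trajectories, uniform negativity of $\dot V$ on a compact annulus forcing the limit to zero), which is exactly the proof behind the result the paper invokes: the paper itself does not prove this theorem but cites it as Theorem 4.2 of Khalil \cite{Khalil}. Your explicit standing assumption that $\Psi$ is locally Lipschitz (so solutions exist and are unique) correctly supplies the regularity that the paper's statement leaves implicit, and your observation that Definition \ref{defGlobal} only demands global attractivity of the stationary point is consistent with how the paper uses the theorem.
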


Notice that, under the conditions of Theorem \ref{thStability},
 the Lyapunov-function candidate $V$ provides a form of monotonicity:  We necessarily have $V(0) = 0$ and $V(x(t))$
strictly decreasing in $t$ for $x(t) \not= 0$.  To elaborate, we introduce the notion of
a $V$-ball.  We say that $\beta_V(\alpha)$ is the $\alpha$ $V$-ball with
center at $x^*$ and radius $\alpha$ if
\beql{Vball}
\beta_V(\alpha) \equiv \{ x \in \RR^n : \| V(x) - V(x^*) \| \le \alpha \}.
\eeq
If $x(t_0) \in \beta_V(\alpha)$ for some $\alpha \ge 0$ and $t_0 \ge 0$, then $x(t) \in \beta_V(\alpha)$ for all $t \ge t_0$.

\paragraph{\sc Proof of Theorem \ref{thODElimit}}
Theorem \ref{thStability} applies directly only within one region, starting at a point in $\rS^+$, $\rS^-$, $\AA$, $\AA^-$ or $\AA^+$.
However, we will show that the same Lyapunov function $V$ can be used in all regions,
leading to global decrease of $V$ as $x^*$ is being approached.

Let $x$ be the unique solution to \eqref{IVP}.
Let $x^* \equiv(q_1^*, q_2^*, z^*_{1,2})$ be the stationary point for the system \eqref{ode}.
Without loss of generality, we perform a change of variables and define a new system whose unique
stationary point is $x = 0$. To this end, let $y = x - x^*$ so that $\dot{y} = \dot{x} = \Psi(x)$. Hence,
$\Psi(x) = \Psi(y+x^*) \equiv g(y)$ and we have that $g(0) = \Psi(0+x^*) = \Psi(x^*) = 0$. That is, if $x^*$
is a stationary point for the original system $\dot{x} = \Psi(x)$, then the stationary point
for the new system, $\dot{y} = g(y)$, is $y^* = 0$.
We distinguish between two cases: $(i)\, \mu_{1,2} > \mu_{2,2}$ and $(ii)\, \mu_{1,2} \le \mu_{2,2}$.

$(i)$\ First, if $\mu_{1,2} > \mu_{2,2}$, then choose $V_1(x) \equiv x_1 + x_2$ and apply its Lie derivative along $g(y) = \Psi(y+x^*)$
where $y+x^* = ( q_1(t) + q_1^*, q_2(t) + q_2^*, z_{1,2}(t) + z_{1,2}^* )$ and $x^*$ is given in \eqref{EqBalance}.
By the definition of the Lie derivative, $\dot{V}_1(y)$ is equal to the inner product
\bes
\dot{V}_1(y) = (1, 1, 0) \cdot (\dot{q}_1(t), \dot{q}_2(t), \dot{z}_{1,2}(t))' = \dot{q}_1(t) + \dot{q}_2(t),
\ees
for $\dot{q}_1$, $\dot{q}_2$ and $\dot{z}_{1,2}$ in \eqref{odeDetails}, after the change of variables.
Let $\tilde{z}_{1,2}(t) \equiv z_{1,2}(t) + z^*$.
Then, for $x^* = (q_1^*, q_2^*, z_{1,2}^*)$ as in \eqref{EqBalance}
\bes
\begin{split}
\dot{V}_1(y) &= \lm_1 - m_1\mu_{1,1} - \pi_{1,2}(y(t))[\tilde{z}_{1,2}(t) \mu_{1,2} + (m_2 - \tilde{z}_{1,2}(t))\mu_{2,2}] \\
&  \quad - \theta_1 (q_1(t) + q_1^*)  - (1 - \pi_{1,2}(y(t)))[(m_2 - \tilde{z}_{1,2}(t))\mu_{2,2} + \tilde{z}_{1,2}(t)\mu_{1,2}] \\
&  \quad + \lm_2  - \theta_2 (q_2(t) + q^*) \\
&= \lm_1 + \lm_2 - m_1\mu_{1,1} - m_2\mu_{2,2} + z_{1,2}(t)\mu_{2,2} + z^*\mu_{2,2} - z_{1,2}(t)\mu_{1,2} \\
&  \quad - z_{1,2}^*\mu_{1,2}
 - \theta_1 q_1(t) - \theta_1 q_1^* - \theta_2 q_2(t) - \theta_2 q_2^* \\
&= -\theta_1q_1(t) - \theta_2q_2(t) - z_{1,2}(t)(\mu_{1,2} - \mu_{2,2}).
\end{split}
\ees
Thus, $\dot{V}_1(y) < 0$ for all $y \in \mathbb{R}^3$ unless $y = 0$.

$(ii)$\ When $\mu_{1,2} \le \mu_{2,2}$, there exists a $B \ge 1$ such that $\mu_{2,2} = B\mu_{1,2}$. We next show that
for any $C > B$ the candidate-function $V_2(x) \equiv Cx_1 + x_2 + (C-1)x_3$ is a Lyapunov function.
The Lie derivative of $V_2(x)$ for the modified system $g(y)$ is
\bes
\dot{V}_2(y) = (C, 1, C-1) \cdot (\dot{q}_1(t), \dot{q}_2(t), \dot{z}_{1,2}(t)) = C\dot{q}_1(t) + \dot{q}_2(t) + (C-1)\dot{z}_{1,2}(t).
\ees
Hence,
\bes
\bsplit
\dot{V}_2(y) &= C\left[\lm_1 - m_1\mu_{1,1} - \pi_{1,2}(y(t)) (\tilde{z}_{1,2}(t)\mu_{1,2} + (m_2 - \tilde{z}_{1,2}(t))\mu_{2,2})\right]
 \\
& \quad  - \theta_1(q_1(t) + q_1^*) +\lm_2   - \theta_2(q_2(t) + q_2^*) \\
& \quad - (1 - \pi_{1,2}(y(t)))(\tilde{z}_{1,2}(t)\mu_{1,2} + (m_2 - \tilde{z}_{1,2}(t)\mu_{2,2})) \\
& \quad + (C-1)\left[\pi_{1,2}(y(t))(m_2 - \tilde{z}_{1,2}(t))\mu_{2,2} - (1 - \pi_{1,2}(y(t)))\tilde{z}_{1,2}(t)\mu_{1,2}\right] \\
&= -C\theta_1q_1(t) - \theta_2q_2(t) - z_{1,2}(t)(C\mu_{1,2} - \mu_{2,2}), \\
\end{split}
\ees
so that $\dot{V}_2(y) < 0$ for all $y \ne 0$.

By Theorem \ref{thStability}, $y^* = 0$ is globally asymptotically stable for the modified system $g(y)$.
Hence, $x^*$ is globally asymptotically stable for the original system $\Psi(x)$. That is, for every initial value
$x(0)$ we have that $x(t) \ra x^*$.
\qed

\subsection{Staying in $\rS$} \label{secStayInS}

We also use the Lyapunov argument to prove Theorem \ref{thInS}, i.e., show that the solution to the ODE can never leave $\rS$.

\paragraph{\sc Proof of Theorem \ref{thInS}}
We are given $x (0) \in \rS$.
Consider $t \ge 0$.
It is easy to see that, if $z_{1,2} (t) = 0$, then $\dot{z}_{1,2} (t) \ge 0$, so that $z_{1,2}(t+) \ge 0$.
Similarly, if $z_{1,2} (t) = m_2$, then $\dot{z}_{1,2} (t) \le 0$, so that $z_{1,2} (t+) \le m_2$.

Turning to the queues, note that to leave $\rS$ at time $t+$ we must have $q_1(t) = \kappa$ or $q_2 (t) = 0$
(or both). If $q_1 (t) = \kappa$ and $q_2 (t) > 0$, then $x(t) \in \rS^-$ so that $\pi_{1,2}(x(t)) = 0$.
Plugging this value of $\pi_{1,2} (x (t))$ in the ODE for $q_1 (t)$ in \eqref{odeDetails}, we see that
$\dot{q}_1 (t) \ge \lm_1 - \mu_{1,1} m_1 - \theta_1 \kappa \ge 0$ by Assumption A. Hence, $q_1 (t)$
is nondecreasing.
If $q_1 (t) > \kappa$ and $q_2 (t) = 0$, then $x (t) \in \rS^+$ and $\pi_{1,2} (x (t)) = 1$, which gives
$\dot{q}_2 (t) = \lm_2 > 0$. Hence $q_2$ is increasing at time $t$.

Now consider the case $q_1 (t) = \kappa$ and $q_{2} (t) = 0$, so that $x (t) \in \rS^b$.
For one of the queues to become negative at time $t+$, we need to have its derivative be negative at time $t$.
We will consider various subcases.

First assume that $\dot{q}_1 (t) < 0$ and $\dot{q}_2 (t) \ge 0$.
In that case $q_1 (t+) < q_2 (t+)$, so that $\pi_{1,2} (x (t+)) = 0$. Plugging this value of $\pi_{1,2} (x (t+))$
in the ODE \eqref{odeDetails}, together with $q_{1} (t+) = \kappa$, we see that $\dot{q}_1 (t+) > 0$
by Assumption A.
Next assume that $\dot{q}_1 (t) \ge 0$ and $\dot{q}_2 (t) < 0$. Then $q_1 (t+) > q_2 (t+)$, so that $\pi_{1,2} (x(t+)) = 1$.
Plugging this value of $\pi_{1,2} (x(t+))$, together with $q_2 (t+) = 0$, we see that $\dot{q}_2 (t+) > 0$.

We finally consider the remaining more challenging subcase: $\dot{q}_1 (t) < 0$ and $\dot{q}_2 (t) < 0$.
We will show that this subcase is not possible. To that end,
we further divide this case into three subcases:
$x (t) \in \AA^+$, $x (t) \in \AA^-$ and $x (t) \in \AA$. (Recall that $\rS^b = \AA \cup \AA^+ \cup \AA^-$.)
However, $x (t)$ cannot be in $\AA^-$, since then $\pi_{1,2}(x (t)) = 0$, which implies that $q_1 (t)$
is nondecreasing (plug $\pi_{1,2} (x (t)) = 0$ and $q_1 (t) = \kappa$ into the ODE \eqref{odeDetails}).
Moreover, $x (t)$ cannot be in $\AA^+$, since then $\pi_{1,2} (x (t)) = 1$, which implies that $q_2 (t)$ is strictly increasing.

Now assume the remaining possibility, $x (t) \in \AA$, and recall that $\Psi$ is Lipschitz continuous in $\AA$,
so that the Lyapunov argument holds over $[t, t + \eta)$, for some $\eta > 0$.
Specifically, the Lyapunov function $V$ is monotone increasing in $x (t)$, because $x^* > 0$.
(The inequality holds componentwise.)
If $\mu_{1,2} > \mu_{2,2}$, then we take the Lyapunov function $V_1 (x (t)) = q_1 (t) + q_2 (t)$.
The monotonicity of $V_1$ at $x (t)$ implies that at least one of the queues must be increasing,
which contradicts the assumption that the derivative of both queues is negative at $t$.
If $\mu_{1,2} \le \mu_{2,2}$, then we take the Lyapunov function $V_2 (x (t)) = C q_1 (t) + q_2 (t) + (C-1) z_{1,2} (t)$.
We then choose $C = 1 + \ep$ with $\ep$ small enough, such that $\dot{V}_2 (x (t)) < 0$
(assuming the derivatives of both queues are strictly negative at $t$).
Once again, this contradicts the positive monotonicity of $V$ at $x (t)$.
This concludes the proof.
\qed

\section{Exponential Stability} \label{secExpo}

\begin{definition}{\em $($exponential stability$)$ }
A stationary point $x^*$ is said to be exponentially stable if there exist two real constants $\vartheta$, $\beta > 0$ such that
\bes
\| x(t) - x^* \| \le \vartheta \| x(0) - x^* \| e^{-\beta t},
\ees
for all $t \ge 0$ and for all $x(0)$, where $\| \cdot \|$ is a norm on $\RR^n$.
\end{definition}

We use Theorem 3.4 on p. 82 of Marquez ~\cite{M03},
stated below.

\begin{theorem} {\em $($exponential stability of the origin$)$} \label{thExpZero}
Suppose that all the conditions of Theorem \ref{thStability} are satisfied.  In addition, assume that there exist positive constants
$K_1$, $K_2$, $K_3$ and $p$ such that
\beas
K_1 \| x \|^p &\le& V(x), \quad \le K_2 \| x \|^p \qandq
\dot{V}(x) \le -K_3 \| x \|^p.
\eeas
Then the origin is exponentially stable, and
\bes
\| x(t) \| \le \| x(0) \| \left(K_2/K_1\right)^{1/p} e^{-(K_3/2K_2)t} \qforallq t \qandq x(0).
\ees
\end{theorem}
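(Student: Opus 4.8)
The plan is to reduce the statement to a one-dimensional Gr\"onwall (comparison-lemma) argument for the scalar function $t\mapsto V(x(t))$ evaluated along a trajectory. Fix an arbitrary initial condition $x(0)$. The hypotheses inherited from Theorem \ref{thStability} (that $x=0$ is a stationary point of a well-posed ODE and that $V$ is a continuously differentiable, radially unbounded Lyapunov-function candidate) ensure that the solution $x(\cdot)$ exists and stays bounded on $[0,\infty)$ and that $t\mapsto V(x(t))$ is continuously differentiable with $\frac{d}{dt}V(x(t))=\dot V(x(t))=\nabla V\cdot\Psi(x(t))$ in the sense of Definition \ref{defLie}.

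The key step couples the two-sided bound on $V$ with the dissipation inequality. Since $V(x)\le K_2\|x\|^p$ forces $\|x\|^p\ge V(x)/K_2$, along the trajectory we obtain the linear differential inequality
\bes
\frac{d}{dt}V(x(t))=\dot V(x(t))\le -K_3\|x(t)\|^p\le -\frac{K_3}{K_2}\,V(x(t)),\qquad t\ge 0.
\ees
Integrating this (the comparison lemma, as in \cite{Khalil} or \cite{M03}) gives $V(x(t))\le V(x(0))\,e^{-(K_3/K_2)t}$. Unwinding the bounds on $V$ --- $K_1\|x(t)\|^p\le V(x(t))$ on the left and $V(x(0))\le K_2\|x(0)\|^p$ on the right --- yields
\bes
K_1\|x(t)\|^p\le K_2\|x(0)\|^p\,e^{-(K_3/K_2)t},
\ees
and taking $p$-th roots produces $\|x(t)\|\le (K_2/K_1)^{1/p}\|x(0)\|\,e^{-(K_3/(pK_2))t}$. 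This is the claimed estimate with $\vartheta=(K_2/K_1)^{1/p}$ and decay rate $\beta=K_3/(pK_2)$, coinciding with the stated $K_3/(2K_2)$ when $p=2$; exponential stability in the sense of the definition is then immediate.

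The one place requiring genuine care --- and where I expect the only real obstacle --- is the regularity needed to differentiate $V$ along the flow and to run the comparison lemma. In the abstract formulation above it is subsumed in the standing assumptions of Theorem \ref{thStability}. When this result is later applied to the ODE \eqref{odeDetails}, however, $\Psi$ is only piecewise Lipschitz and the solution $x$ is merely continuous and differentiable almost everywhere (Theorem \ref{th1}), crossing the singular set $\rS^b$ --- possibly at an accumulation of hitting times of $\AA$. There one should instead observe that the composition of the \emph{linear} Lyapunov functions $V_1,V_2$ used in \S\ref{secStat} with the bounded, absolutely continuous path $x$ is absolutely continuous, that $\frac{d}{dt}V(x(t))=\nabla V\cdot\Psi(x(t))$ for a.e.\ $t$ (using boundedness of the solution from Theorem \ref{thBound}, hence of $\Psi(x(t))$), and that the integrated form of Gr\"onwall's inequality holds for absolutely continuous functions; the computation above then carries over unchanged across all region crossings. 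A minor bookkeeping task that remains is to produce explicit constants $K_1,K_2,K_3,p$ for those linear candidates on the relevant restricted state space, on which they are positive definite.
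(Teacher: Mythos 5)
Your argument is correct, but note that the paper contains no proof of this statement to compare against: Theorem \ref{thExpZero} is quoted from Theorem 3.4 of Marquez \cite{M03} and used as a black box. What you supply is the standard comparison-lemma derivation of that textbook result: from $V(x)\le K_2\|x\|^p$ and $\dot V(x)\le -K_3\|x\|^p$ you get the linear differential inequality $\dot V(x(t))\le -(K_3/K_2)V(x(t))$, integrate, and then sandwich with $K_1\|x\|^p\le V(x)\le K_2\|x\|^p$. Your closing remark on regularity is also the right thing to worry about when the theorem is invoked for \eqref{IVP}: the solution there is only continuous and differentiable almost everywhere (Theorem \ref{th1}) and $\Psi$ is only piecewise Lipschitz, so one should indeed run the integrated (absolutely continuous) form of Gr\"onwall along the linear candidates $V_1,V_2$; the paper glosses over this in the proof of Theorem \ref{thExpFixed} with the remark that the same Lyapunov function works in all regions.

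One discrepancy deserves flagging. Your derivation yields the decay rate $K_3/(pK_2)$, whereas the statement as transcribed claims $K_3/(2K_2)$ for arbitrary $p>0$. The two agree at $p=2$; for $p<2$ your (correct) rate is faster and hence implies the stated bound; for $p>2$ the stated bound would not follow from your estimate --- but the standard form of this theorem (e.g.\ Khalil's Theorem 4.10) indeed has $p$, not $2$, in the denominator, so this is a transcription slip in the paper rather than a defect in your proof. In the only place the theorem is applied (Theorem \ref{thExpFixed}), $p=1$, so the weaker rate $K_3/(2K_2)$ quoted there remains valid, merely not tight.
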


We use the
$L_1$ norm: $\| x \| = |x_1| + |x_2| + |x_3|$ for $x \in \RR^3$.

\begin{theorem} {\em $($exponential stability of $x^*$$)$} \label{thExpFixed}
Each $x^*$ in $\rS$ is exponentially stable.
\begin{enumerate}
\item If $\mu_{1,2} > \mu_{2,2}$, then
\bes
\| x(t) - x^*\| \le \| x(0) - x^* \| e^{-(K_3/2)t} \qforallq t \qandq x(0)
\ees
for all $x(0) \in \rS$ and $t \ge 0$, where $K_3 \equiv \max\{ \theta_1, \theta_2, \mu_{1,2} - \mu_{2,2} \}$.

\item If $\mu_{2,2} = B\mu_{1,2}$, $B \ge 1$, then
\bes
\| x(t) - x^* \| \le \| x(0) - x^* \| (C / K_1) e^{-(K_4/2)t}
\ees
for all $x(0) \in \rS$, $t \ge 0$ and $C > B$, where $K_1 \equiv \min\{ 1, C-1 \}$ and $K_4 \equiv \max\{ C\theta_1, \theta_2, (C\mu_{1,2} - \mu_{2,2}) \}$.
\end{enumerate}
\end{theorem}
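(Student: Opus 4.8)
The plan is to obtain Theorem \ref{thExpFixed} by quantifying the Lyapunov analysis already carried out for Theorem \ref{thODElimit} and feeding it into the exponential-stability criterion of Theorem \ref{thExpZero}. As in that earlier proof, I would first pass to the shifted system $y \equiv x - x^*$, $g(y) \equiv \Psi(y+x^*)$, whose unique stationary point is $y = 0$, so it suffices to prove the bound with $x, x^*$ replaced by $y, 0$. The hypotheses of Theorem \ref{thStability} were already verified in the proof of Theorem \ref{thODElimit}, with $V_1(y) = y_1 + y_2$ when $\mu_{1,2} > \mu_{2,2}$, and $V_2(y) = Cy_1 + y_2 + (C-1)y_3$, $C > B$, when $\mu_{2,2} = B\mu_{1,2}$ with $B \ge 1$; there one computes $\dot V_1(y) = -\theta_1 y_1 - \theta_2 y_2 - (\mu_{1,2}-\mu_{2,2})y_3$ and $\dot V_2(y) = -C\theta_1 y_1 - \theta_2 y_2 - (C\mu_{1,2}-\mu_{2,2})y_3$, where $C\mu_{1,2}-\mu_{2,2} = (C-B)\mu_{1,2} > 0$. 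The new work is to supply the three polynomial inequalities demanded by Theorem \ref{thExpZero} with $p = 1$ and the $L_1$ norm, with constants as in the statement.

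Two of these are routine. The decrease bound holds because $\dot V_i$ is an explicit linear functional with strictly negative coefficients, so it is bounded above by $-K_3\|y\|$ in case (i) (with $K_3 = \max\{\theta_1, \theta_2, \mu_{1,2}-\mu_{2,2}\}$) and by $-K_4\|y\|$ in case (ii) on the part of the state space the trajectory occupies. The upper sandwich bound $V_i(y) \le K_2\|y\|$ is immediate from the coefficient sizes, with $K_2 = 1$ for $V_1$ and $K_2 = C$ for $V_2$. The lower sandwich bound $K_1\|y\| \le V_i(y)$ is the delicate one: $V_1$ and $V_2$ were chosen as \emph{linear} functionals precisely so that the nonlinear, $\pi_{1,2}$-dependent flow term cancels in $\dot V_i$ --- which is also what makes the $\dot V_i$ computation identical in every region --- but the price is that each $V_i$ vanishes on a plane and so is not, by itself, comparable to $\|\cdot\|$ on all of $\RR^3$. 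I would close this gap using Theorem \ref{thBound} (boundedness of the solution) together with Theorem \ref{thODElimit} (global asymptotic stability), which confine the trajectory to a region where $V_i$ is bounded below by a positive multiple $K_1$ of $\|\cdot\|$, with $K_1 = \min\{1, C-1\}$ in case (ii); this produces the prefactor $K_2/K_1$.

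It remains to account for the fact that $\Psi$, hence $g$, is discontinuous across $\rS^b - \AA$, so Theorem \ref{thExpZero} --- stated for a smooth, globally defined vector field --- does not apply verbatim. This is handled exactly as in the proof of Theorem \ref{thODElimit}: the single function $V_i$ is a valid Lyapunov function in each of the regions $\rS^+, \rS^-, \AA, \AA^+, \AA^-$, since its Lie derivative uses only the common form of the ODE \eqref{odeDetails} and the cancellation of the $\pi_{1,2}$-term. Hence $t \mapsto V_i(x(t))$ is continuous (as $x(\cdot)$ is continuous by Theorem \ref{th1} and $V_i$ is continuous) and satisfies $\frac{d}{dt}V_i(x(t)) \le -K_3\|x(t)-x^*\|$ (resp. $-K_4\|x(t)-x^*\|$) at almost every $t$; combined with the sandwich bounds this gives $\frac{d}{dt}\bigl(V_i(x(t))-V_i(x^*)\bigr) \le -c\bigl(V_i(x(t))-V_i(x^*)\bigr)$ for a suitable $c > 0$ off a null set, and Theorem \ref{thExpZero} (or directly a Gr\"onwall argument) then yields the stated exponential decay of $\|x(t)-x^*\|$ for every initial condition in $\rS$.

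The step I expect to be the main obstacle is the lower sandwich bound $K_1\|y\| \le V_i(y)$: since every linear functional on $\RR^3$ has a two-dimensional kernel, $V_i$ cannot be norm-equivalent on the whole space, so one genuinely needs the dynamical confinement of the trajectory (via Theorems \ref{thBound} and \ref{thODElimit}) to a set that stays away from the kernel of $V_i$, and one must check that the resulting $K_1$, and hence the prefactor, can be taken uniformly over all initial conditions in $\rS$ so that the exponential-stability estimate does not degenerate.
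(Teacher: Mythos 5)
You follow the paper's route exactly---the same linear Lyapunov functions $V_1(y)=y_1+y_2$ and $V_2(y)=Cy_1+y_2+(C-1)y_3$ applied to the shifted variable $y=x-x^*$, the same criterion (Theorem \ref{thExpZero}) with $p=1$ and the $L_1$ norm, and the same region-by-region handling of the discontinuity---and you have correctly put your finger on the weak point, the lower sandwich bound $K_1\|y\|\le V_i(y)$. But your proposed repair does not close the gap. That inequality is positively homogeneous of degree one in $y$, so it depends only on the direction $y/\|y\|$ and not on $\|y\|$; confining the trajectory to a bounded set via Theorem \ref{thBound}, or to a small neighborhood of $x^*$ via Theorem \ref{thODElimit}, therefore gives no control over the ratio $V_i(y)/\|y\|$. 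Every neighborhood of $y=0$ contains points with $V_i(y)=0$ and $\|y\|>0$ (for $V_1$, take $y=(0,0,\epsilon)$, i.e., $z_{1,2}=z^*_{1,2}+\epsilon$ with both queues at their stationary values) and points with $V_i(y)<0$, so no uniform $K_1>0$ exists on any set to which the trajectory is ``confined'' unless you prove the orbit avoids those directions---and neither boundedness nor convergence to $x^*$ implies that. The same sign issue infects the step you call routine: $\dot V_1(y)=-\theta_1 y_1-\theta_2 y_2-(\mu_{1,2}-\mu_{2,2})y_3$ satisfies $\dot V_1(y)\le -K\|y\|$ (indeed even $\dot V_1(y)\le 0$) only when all components of $y$ are nonnegative, so your hedge ``on the part of the state space the trajectory occupies'' is precisely the assertion that needs proof.

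For comparison, the paper's own proof does not attempt your dynamical-confinement argument: it simply asserts the sandwich bounds from nonnegativity of the argument (``since $x\ge 0$, we can take $K_1=K_2=1$'' in case (i), and $K_1\|x\|\le V_2(x)<C\|x\|$ in case (ii)), i.e., it implicitly works on the orthant $\{y\ge 0\}$ and never addresses what happens when $x(t)-x^*$ has a negative component (and in case (i) the bound $\|y\|\le V_1(y)$ fails even on that orthant, since $V_1$ assigns weight $0$ to $y_3$). So your diagnosis of where the difficulty lies is sharper than the paper's treatment, but to finish along these lines you would need either a Lyapunov function that is genuinely norm-equivalent near $x^*$ or an argument that $y(t)$ is trapped in a cone on which $V_i$ is comparable to $\|\cdot\|$ and $\dot V_i$ is negative definite; neither is supplied by Theorems \ref{thBound} and \ref{thODElimit}.
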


\begin{proof} As in the
proof of Theorem \ref{thODElimit},
Theorem \ref{thExpFixed} applies directly only within one region, starting at a point in $\rS^+$, $\rS^-$, $\AA$, $\AA^-$ or $\AA^+$.
However, again, the same Lyapunov function $V$ can be used in all regions.

We consider the two cases in turn:
$(i)$ In the proof of Theorem \ref{thStability}, $V_1(x) \equiv x_1 + x_2$ was shown to be a Lyapunov function
with a strictly negative Lie derivative.  Since $x \ge 0$, we can take $K_1 = K_2 = 1$ and $p = 1$.
Since $\dot{V}_1(x) = -\theta_1 q_1(t) - \theta_2 q_2(t) - (\mu_{1,2} - \mu_{2,2})z_{1,2}(t)$, we can take $K_3$ in \eqref{K3},
and the result follows from Theorem \ref{thExpZero}.

$(ii)$ We use the Lyapunov function $V_2(x) = Cx_1 + x_2 + (C-1)x_3$.
Then $K_1 \| x \| \le V_2(x) < C \| x \|$ for $K_1 \equiv \min\{ 1, C-1 \}$.
From the proof of Theorem \ref{thStability}, we know that
$\dot{V}_2(x) = -C\theta_1 q_1(t) - \theta_2 q_2(t) - (C\mu_{1,2} - \mu_{2,2})z_{1,2}(t)$, so that
$\dot{V}_2(x) \le -K_4 \| x \|$.
\end{proof}

\section{Conditions for State-Space Collapse} \label{secSufficient}

In this section we give ways of verifying that $x$ lies entirely in $\AA$,
given that $x(0)$ and $x^*$ are both in $\AA$.
In the appendix we provide conditions for the solution to eventually reach $\AA$
after an initial transient.  The results here are intended to apply after this initial transient period
has concluded.

\begin{theorem}{\em $($sufficient conditions for global SSC$)$} \label{ThglobalSuff}
Let ${\nu} \equiv \mu_{1,2} \wedge \mu_{2,2}$,
and suppose that $x(0) \in \AA$.
Also assume that
\beql{Qcond}
q_2(0) \le \lm_2/\theta_2 \qandq  q_1(0) \le (\lm_1 - m_1\mu_{1,1})/\theta_1.
\eeq
If, in addition, the following inequalities are satisfied, then the solution to the IVP \eqref{IVP} is in $\AA$ for all $t$:
\bea \label{suffCond}
(i)  \quad \lm_1 & < & \nu m_2 + m_1\mu_{1,1} \qandq
(ii) \quad \lm_2  >  \nu m_2.
\eea
\end{theorem}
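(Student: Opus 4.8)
\emph{Proof proposal.} The plan is to show the forward orbit never reaches the relative boundary of $\AA$ in the hyperplane $\rS^b$. Since $x(0)\in\AA$, Theorem~\ref{thLipsz} together with the Picard--Lindel\"{o}f argument behind Theorem~\ref{th1} produces a solution that stays in $\AA$ on a maximal interval $[0,t_1)$, $t_1\in(0,\infty]$; suppose $t_1<\infty$. On $[0,t_1)$ the averaging principle is active, so SSC is preserved and $q_1(t)-rq_2(t)=\kappa$ there: indeed $\pi_{1,2}(x)$ is the stationary probability of the FTSP, which is positive recurrent on $\AA$ by the positive-recurrence criterion \eqref{posrec}, so its stationary drift $\pi_{1,2}(x)\delta_+(x)+(1-\pi_{1,2}(x))\delta_-(x)$ vanishes, and $\dot q_1-r\dot q_2$ is a positive multiple of this quantity, hence $\equiv0$. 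Consequently the orbit stays in $\rS^b$ on $[0,t_1)$; by Theorem~\ref{thBound} it is bounded, so $x(t_1-)$ exists, and by continuity $\delta_+(x(t_1))\le0\le\delta_-(x(t_1))$. Were both strict we would have $x(t_1)\in\AA$, contradicting maximality; so $\delta_+(x(t_1))=0$ or $\delta_-(x(t_1))=0$, and I will rule out both.

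\emph{A priori bounds and the $\AA^+$ exit.} Since $\pi_{1,2}\in[0,1]$ and every service term in \eqref{odeDetails} is nonnegative, $\dot q_1\le\lambda_1-m_1\mu_{1,1}-\theta_1q_1$ and $\dot q_2\le\lambda_2-\theta_2q_2$; comparison with these scalar ODEs and the hypothesis \eqref{Qcond} give $q_1(t)\le(\lambda_1-m_1\mu_{1,1})/\theta_1$ and $q_2(t)\le\lambda_2/\theta_2$ for all $t$. Writing $A(x):=\mu_{1,2}z_{1,2}+\mu_{2,2}(m_2-z_{1,2})$, so that $A(x)\ge m_2\nu$, the rates \eqref{bd3}--\eqref{bd4} make $\delta_+(x)$ a positive multiple of $(\lambda_1-m_1\mu_{1,1}-A(x))-\theta_1q_1+r(\theta_2q_2-\lambda_2)$; using $q_1\ge0$, $q_2\le\lambda_2/\theta_2$ and condition $(i)$ this is $\le\lambda_1-m_1\mu_{1,1}-m_2\nu<0$ for every $t$, so $\delta_+(x(t_1))<0$ and the $\AA^+$ exit is impossible.

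\emph{The $\AA^-$ exit.} From \eqref{bd1}--\eqref{bd2}, $\delta_-(x)$ is a positive multiple of $(\lambda_1-m_1\mu_{1,1}-\theta_1q_1)+r(A(x)+\theta_2q_2-\lambda_2)$; since $q_1(t)\le(\lambda_1-m_1\mu_{1,1})/\theta_1$ and $A\ge m_2\nu$, it suffices to prove $q_2(t)>\tau:=(\lambda_2-m_2\nu)/\theta_2$ for all $t$ (and $\tau>0$ by $(ii)$). To get this lower bound I would again use SSC: on $[0,t_1)$ we have $\dot q_1=r\dot q_2$, so combining with the unconditional identity $\dot q_1+\dot q_2=\lambda_1+\lambda_2-m_1\mu_{1,1}-A-\theta_1q_1-\theta_2q_2$ (read off \eqref{odeDetails}, the $\pi_{1,2}$-terms cancelling) and $q_1=rq_2+\kappa$ yields the single scalar equation $\dot q_2=\frac{1}{1+r}\big(\lambda_1+\lambda_2-m_1\mu_{1,1}-\theta_1\kappa-A-(r\theta_1+\theta_2)q_2\big)$, with the forcing $A$ confined to $[m_2\nu,\,m_2(\mu_{1,2}\vee\mu_{2,2})]$. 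An invariant-interval comparison then bounds $q_2$ from below by the equilibrium of the worst case of this equation, and Assumption~A (which gives $\lambda_1-m_1\mu_{1,1}\ge\theta_1\kappa$) together with the bounds above and conditions $(i)$--$(ii)$ should be precisely what is needed for that equilibrium to exceed $\tau$; verifying this inequality is the crux. Given it, continuity forces $\delta_-(x(t_1))>0$, contradicting $\delta_-(x(t_1))=0$; hence $t_1=\infty$ and $x(t)\in\AA$ for all $t$.

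\emph{Main obstacle.} The delicate point is the lower bound $q_2(t)>\tau$. A direct estimate on $\dot q_2=\lambda_2-(1-\pi_{1,2}(x))A(x)-\theta_2q_2$ fails, because $\pi_{1,2}$ is not bounded away from $0$ on $\AA$ — it degenerates to $0$ exactly as the $\AA^-$-boundary is approached, the very regime to be excluded, making any such bound circular. Eliminating $\pi_{1,2}$ through SSC, as above, removes this obstruction and leaves only the elementary behaviour of the affine map $z_{1,2}\mapsto A(z_{1,2})$, controlled by $\nu$ and $\mu_{1,2}\vee\mu_{2,2}$; checking that $(i)$--$(ii)$ close the resulting one-dimensional comparison is the main calculation. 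One must also treat the degenerate configurations $z_{1,2}\in\{0,m_2\}$, where $A=m_2\nu$ and the $\delta_-$ estimate is only weak: these are handled by noting $\dot z_{1,2}\ge0$ at $z_{1,2}=0$ and $\dot z_{1,2}\le0$ at $z_{1,2}=m_2$, so these faces are never reached from within $\AA$.
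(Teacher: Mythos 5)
Your handling of condition $(i)$ is essentially the paper's: bound the left side of the drift inequality by $-\nu m_2$ over $z_{1,2}\in[0,m_2]$, use $q_1(t)\ge 0$ and the a priori bound $\theta_2 q_2(t)\le\lm_2$ (from the first inequality in \eqref{Qcond} and the comparison argument of Theorem \ref{thBound}), and invoke $(i)$ to get $\delta_+(x(t))\le \lm_1-m_1\mu_{1,1}-\nu m_2<0$ for all $t$. The maximal-interval framing and the observation that the averaged drift $\pi_{1,2}\delta_+ +(1-\pi_{1,2})\delta_-$ vanishes on $\AA$ are correct but not needed; once both strict drift inequalities are shown to hold along the entire trajectory, the conclusion $x(t)\in\AA$ for all $t$ follows directly.

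The genuine gap is in the $\delta_-$ half, which you explicitly leave unverified ("the crux"). Your reduction of $\delta_-(x(t))>0$ to the strictly positive lower bound $q_2(t)>\tau\equiv(\lm_2-\nu m_2)/\theta_2$ cannot be closed from the theorem's hypotheses: \eqref{Qcond} supplies only an \emph{upper} bound on $q_2(0)$, and $x(0)\in\AA$ is perfectly compatible with $q_2(0)=0<\tau$ (take $q_1(0)$ small and $z_{1,2}(0)$ so that $\mu_{1,2}z_{1,2}(0)+\mu_{2,2}(m_2-z_{1,2}(0))$ is large), so no invariant-interval or comparison argument can force $q_2(t)>\tau$ for all $t\ge 0$ — it can already fail at $t=0$. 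The paper's proof of part $(ii)$ is a one-line two-sided bound that never needs a lower bound on $q_2$: using $q_2(t)\ge 0$ and $\theta_1 q_1(t)\le\lm_1-m_1\mu_{1,1}$ (from the second inequality in \eqref{Qcond}), the right side of the $\delta_-$ inequality is at most $r\lm_2$, the left side is at least $r\nu m_2$, and condition $(ii)$ is invoked to compare the two. You have, in effect, detected a real wrinkle here — for the chain $r\nu m_2>r\lm_2$ to hold one needs $\nu m_2>\lm_2$, the reverse of $(ii)$ as printed, so the statement and the displayed chain cannot both be right as written — but the intended repair is the elementary bound with the inequality read in the consistent direction, not the positive lower bound on $q_2$ that your route requires and that the hypotheses do not provide.
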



\begin{proof}
We start by showing, under Condition $(i)$, that $\delta_+(x(t))$ in \eqref{drift} is strictly negative for each $t$.
For a fixed $t$,
\bes
\delta_{+}(x(t)) \equiv j \left(\lambda^{(j)}_{+}(t) - \mu^{(j)}_{+}(t)\right) + k \left(\lambda^{(k)}_{+}(t) - \mu^{(k)}_{+}(t)\right) < 0
\ees
if and only if
\bequ \label{driftIneq1}
(\mu_{2,2}- \mu_{1,2})z_{1,2}(t) - m_2\mu_{2,2} < -(\lm_1 - m_1\mu_{1,1}) + r(\lm_2 - \theta_2q_2(t)) + \theta_1q_1(t).
\eeq
If $\mu_{2,2} > \mu_{1,2}$, then the left-hand side (LHS) of \eqref{driftIneq1} is maximized at $z_{1,2}(t) = m_2$, and is equal to $-\mu_{1,2}m_2$.
If $\mu_{2,2} < \mu_{1,2}$, the the LHS is maximized at $z_{1,2}(t) = 0$, and is equal to $-\mu_{2,2}m_2$. When $\mu_{2,2} = \mu_{1,2}$ the LHS
is equal to $-\mu_{2,2}m_2 = -\mu_{1,2}m_2$. Overall, the LHS of \eqref{driftIneq1} is smaller than or equal to $-\nu m_2$.

Since $q_2(0) \le \lm_2/\theta_2$, we conclude, using the bound in \eqref{bound1}, that $\theta_2q_2(t) \le \lm_2$ for all $t \ge 0$. This,
together with the fact that $q_1(t) \ge 0$ for all $t$,
implies that the RHS of \eqref{driftIneq1} is larger than or equal to
$-(\lm_1 - m_1\mu_{1,1})$, so that
\bes
\begin{split}
(\mu_{2,2} - \mu_{1,2})z_{1,2}(t) - \mu_{2,2}m_2 &\le -\nu m_2 
< -(\lm_1 - m_1\mu_{1,1}) \\
& \le -(\lm_1 - m_1\mu_{1,1}) + r(\lm_2 - \theta_2q_2(t)) + \theta_1q_1(t)
\end{split}
\ees
where the second inequality is due to condition $(i)$.

To show that condition $(ii)$ is sufficient to have $\delta_{-}(x(t)) > 0$ for all $t$, fix $t \ge 0$ and note that, for $\delta_{-}(x(t))$
in \eqref{drift}, we have
\bes
\delta_-(x(t)) \equiv j\left(\lm^{(j)}_{-}(t) - \mu^{(j)}_{-}(t)\right) + k\left(\lm^{(k)}_{-}(t) - \mu^{(k)}_{-}(t)\right) > 0
\ees
if and only if
\bequ \label{driftIneq2}
r(\mu_{1,2} - \mu_{2,2})z_{1,2}(t) + r\mu_{2,2}m_2 > -(\lm_1 - m_1\mu_{1,1}) + r(\lm_2 - \theta_2q_2(t)) + \theta_1q_1(t).
\eeq
It is easy to see that the LHS of \eqref{driftIneq2} has a minimum value of $r (\mu_{1,2} \wedge \mu_{2,2}) m_2 \equiv r \nu m_2$.
By essentially the same arguments as in Theorem \ref{thBound} we can show that $q_1(t) \le q_1(0) \vee (\lm_1 - m_1\mu_{1,1})/\theta_1$.
Since we assume that $q_1(0) \le (\lm_1 - m_1\mu_{1,1})/\theta_1$, we have the bound $q_1(t) \le (\lm_1 - m_1\mu_{1,1})/\theta_1$ for all $t \ge 0$.
With this bound, we see that the RHS of \eqref{driftIneq2} is
smaller than or equal to $r\lm_2$. Overall, we have
\bes
\begin{split}
r(\mu_{1,2}-\mu_{2,2})z_{1,2}(t) + r\mu_{2,2} m_2 &\ge r \nu m_2 
> r\lm_2 \\
& \ge -(\lm_1 - m_1\mu_{1,1}) + r(\lm_2 - \theta_2q_2(t)) + \theta_1q_1(t),
\end{split}
\ees
where the second inequality is due to Condition $(ii)$.
Since \eqref{posrec} holds for all $t \ge 0$, we also have $0 < \pi_{1,2}(t) < 1$ for all $t$.
Hence, every solution to the IVP in \eqref{IVP}
must lie entirely in $\AA$.
\end{proof}

For $x^* \in \AA$, we will now show that there exist
$\af > 0$ and $T \equiv T(\af)$, such that global SSC can be inferred once $\| x(T) - x^* \| < \af$.
We exploit
the drift rates at stationarity, defined by
$\delta_{+}^* \equiv \delta_{+}(x^*)$ and $\delta_{-}^* \equiv \delta_{-}(x^*)$.
It follows from the expressions in \eqref{drift} that
\bequ \label{SSrates}
\delta_{+}^* \equiv \delta_{+}(x^*) = -\mu_{2,2}(r+1)(m_2 - z_{1,2}^*), \quad
\delta_{-}^* \equiv \delta_{-}(x^*) = \mu_{1,2}(r+1)z_{1,2}^*.
\eeq
Thus, if $0 < z_{1,2}^* < m_2$, then the positive recurrence condition \eqref{posrec} holds at the stationary point $x^*$. (This agrees with \eqref{piSS}
which has $0 < \pi^*_{1,2} < 1$ if and only if $0 < z^*_{1,2} < m_2$.)

In the next theorem we give explicit expressions for $\af$.
For reasonable rates, such as will hold in applications, $\af$ is quite large.
In fact, in the numerical example considered in \S \ref{secExample} we show that, typically in applications, $\af$ is so large, that
we can infer that $x$ lies entirely in $\AA$ without even solving the IVP; i.e., $x(0) \in \beta_V(\af)$.

\begin{theorem} \label{thVerify}
Suppose that $x^* \in \AA$ and let $\xi \equiv \min\{|\delta^*_+|, \delta^*_-\}$.
\begin{enumerate}
\item If $\mu_{2,2} \ge \mu_{1,2}$, then let $\alpha = \xi / r\theta_2$ 
\item If $\mu_{2,2} < \mu_{1,2}$, then let $\alpha = \xi / \varsigma$,  where $\varsigma \equiv \mu_{1,2} - \mu_{2,2} + \theta_1 + r \theta_2 > 0$.
\end{enumerate}
In both cases, if there exists $T \ge 0$ such that $x(T) \in \beta_V(\alpha)$, then $\{x(t): t \ge T\}$ lies entirely in $\AA$.
\end{theorem}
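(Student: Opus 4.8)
}
The plan is to combine the forward-invariance of the Lyapunov slab $\beta_V(\alpha)$ (the monotonicity consequence recorded just after \eqref{Vball}) with the fact that the drift functions $\delta_+,\delta_-$ in \eqref{drift} are affine in $x$: once the solution enters $\beta_V(\alpha)$ it stays there, and on the part of $\beta_V(\alpha)$ a solution through $\AA$ can actually visit, the affine functions $\delta_\pm$ stay within $\xi$ of their stationary values $\delta_\pm^*$, which by \eqref{SSrates} satisfy $\delta_+^*\le-\xi<0<\xi\le\delta_-^*$; hence their signs are preserved and the solution cannot reach the boundary of $\AA$.

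First I would reduce the claim to a statement about the signs of $\delta_\pm(x(t))$. Under the standing assumption of this section $x(0)\in\AA\subseteq\rS^b$, and by the local analysis of \S\ref{secPsi} a solution that lies in $\AA$ remains in $\rS^b$ and can leave $\AA$ only by reaching a point of $\rS^b$ with $\delta_+(x)=0$ or $\delta_-(x)=0$ (passing into $\AA^+$ or $\AA^-$, the only sets from which it can subsequently leave $\rS^b$); Lemma \ref{lmDriftDeriv} together with \eqref{DriftDiff} shows the two alternatives are mutually exclusive. So I would run a first-exit argument: set $t_1\equiv\inf\{t\ge 0:x(t)\notin\AA\}$ and suppose $t_1<\infty$, so by continuity $x(t_1)\in\rS^b$ with $\delta_+(x(t_1))=0$ or $\delta_-(x(t_1))=0$; if $t_1\ge T$ then forward-invariance gives $x(t_1)\in\beta_V(\alpha)$, and I will derive a contradiction. (In the case of interest, $T=0$ with $x(0)\in\beta_V(\alpha)$, the membership $x(t_1)\in\beta_V(\alpha)$ is automatic; for a general $T$ one uses in addition that $x([0,T])\subseteq\AA$ in the intended application.)

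The core is the drift estimate: $x\in\beta_V(\alpha)\cap\rS^b$ should force $|\delta_+(x)-\delta_+^*|<\xi$ and $|\delta_-(x)-\delta_-^*|<\xi$. On $\rS^b$ the identity $q_1-q_1^*=r(q_2-q_2^*)$ removes the $q_1$-dependence, so the affine expressions for $\delta_\pm$ obtained from \eqref{bd1}--\eqref{bd4} collapse to affine functions of $(q_2-q_2^*,\,z_{1,2}-z_{1,2}^*)$ alone, and $V$ restricted to $\rS^b$ — with $V=V_1=x_1+x_2$ when $\mu_{2,2}<\mu_{1,2}$ and $V=V_2=Cx_1+x_2+(C-1)x_3$ when $\mu_{2,2}\ge\mu_{1,2}$, as in the proof of Theorem \ref{thODElimit} — likewise becomes affine in those two deviations. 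One then feeds in $|V(x)-V(x^*)|\le\alpha$, the one-sided bound $V(x(t))\ge V(x^*)$ from monotone decrease along the solution, and the a priori ranges $0\le z_{1,2}\le m_2$, $q_1\ge\kappa$, $q_2\ge0$ from $\rS$ (with $\delta_\pm^*$ written out via \eqref{SSrates} and the crude bounds $q_i(t)\le q^{bd}_i$ from Theorem \ref{thBound} if needed); the choices $\alpha=\xi/r\theta_2$ and $\alpha=\xi/\varsigma$ are exactly the thresholds for which the two sides balance, $\varsigma=\mu_{1,2}-\mu_{2,2}+\theta_1+r\theta_2$ being the $\ell_1$-norm of the gradient of $\delta_+$. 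Since $\delta_+^*\le-\xi$ and $\delta_-^*\ge\xi$, this yields $\delta_+(x)<0<\delta_-(x)$, contradicting $\delta_+(x(t_1))=0$ or $\delta_-(x(t_1))=0$; hence $t_1=\infty$, i.e.\ $x(t)\in\AA$ for all $t\ge 0$, in particular for $t\ge T$.

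The step I expect to be the main obstacle is precisely this drift estimate in the $z_{1,2}$-direction. The hyperplane $\rS^b$ pins down the two queue deviations in terms of a single scalar that $\beta_V(\alpha)$ controls, but it says nothing directly about $z_{1,2}-z_{1,2}^*$, and in the case $\mu_{2,2}<\mu_{1,2}$ the admissible Lyapunov function $V_1$ carries no $z_{1,2}$-term, so the crude bound $|z_{1,2}-z_{1,2}^*|\le m_2$ is too weak to beat $\xi$ when $z_{1,2}^*$ is near $0$ or $m_2$ — exactly when $\xi$ is small, by \eqref{SSrates}. Overcoming this requires estimating the affine quantity $\delta_\pm(x)-\delta_\pm^*$ as a whole rather than term by term — exploiting the cancellation between its $(q_2-q_2^*)$- and $(z_{1,2}-z_{1,2}^*)$-parts, keeping only the one-sided information actually needed ($\delta_+<0$ requires only an upper bound on $\delta_+(x)-\delta_+^*$, $\delta_->0$ only a lower bound on $\delta_-(x)-\delta_-^*$), and, where necessary, bootstrapping on the fact that $x(t)\in\AA$ already holds for $t<t_1$ to restrict the excursion of $z_{1,2}(t)$ — and then squeezing out the precise constants $\xi/r\theta_2$ and $\xi/\varsigma$. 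That one-sided optimization is the delicate part of the computation; the rest is bookkeeping with the QBD drift formulas \eqref{bd1}--\eqref{bd4} and \eqref{SSrates}.
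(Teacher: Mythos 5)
Your skeleton is the same as the paper's: forward-invariance of the level set $\beta_V(\alpha)$ combined with an affine estimate showing that $\delta_+<0<\delta_-$ persists on the portion of $\beta_V(\alpha)$ the solution can visit, with the same assignment of $V_1$ and $V_2$ to the two cases and the correct reading of the denominators ($r\theta_2$ as the only coefficient surviving when $C$ is taken large, $\varsigma$ as the $\ell_1$-norm of the gradient of the normalized $\delta_+$). Your first-exit framing and the reduction to ``the solution can leave $\AA$ only through a point where $\delta_+=0$ or $\delta_-=0$'' are fine, and if anything more carefully stated than in the paper.

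The gap is that the one step that actually carries the theorem --- showing that $x\in\beta_V(\alpha)$ (together with whatever extra structure you invoke) forces $\delta_+(x)<0$ and $\delta_-(x)>0$ --- is never proved: you name it as ``the delicate part,'' list three candidate strategies (cancellation between the $q_2$- and $z_{1,2}$-contributions, one-sided bounds, bootstrapping on $x(t)\in\AA$ for $t<t_1$), and execute none of them, so the first-exit argument has no inequality to contradict at $t_1$. For comparison, the paper closes this step by passing from $x(t)\in\beta_{V}(\epsilon_1)$ to the component-wise bounds \eqref{Vball2} (deviations of $q_1$, $q_2$, $z_{1,2}$ bounded by $\epsilon_1/C$, $\epsilon_1$, $\epsilon_1/(C-1)$ respectively) and substituting these into \eqref{driftIneq1}--\eqref{driftIneq2}; with $C$ large only $r\theta_2\epsilon_1$ survives in case (i), while in case (ii) the three coefficients sum to $\varsigma$. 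Your underlying worry --- that a level set of $V$ is a slab, not a box, so membership in $\beta_V(\alpha)$ does not by itself control the individual deviations, and in particular gives no handle on $z_{1,2}-z^*_{1,2}$ when $V_1$ carries no $z_{1,2}$-term --- is legitimate, and it points at exactly the place where \eqref{Vball2} needs more justification than the paper supplies. But flagging the obstacle is not removing it: to complete the proof you must either derive \eqref{Vball2} (or a one-sided substitute) from the additional constraints you mention ($q_1-q^*_1=r(q_2-q^*_2)$ on $\rS^b$, $0\le z_{1,2}\le m_2$, monotone decrease of $V$ along the flow), or carry out the cancellation computation explicitly and check that it reproduces the stated constants $\xi/r\theta_2$ and $\xi/\varsigma$.
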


\begin{proof}
We use
$\beta_V (\alpha)$, the $\alpha$ $V$-ball with
center at $x^*$ and radius $\alpha$, in \eqref{Vball}.
To find a proper $\af$ for the $V$-ball $\beta_V(\af)$, we once again use the conditions \eqref{driftIneq1} and \eqref{driftIneq2}.
We first show how to find $\af$ for the case $\mu_{2,2} = B\mu_{1,2}$ for some $B \ge 1$, i.e., when $\mu_{1,2} \le \mu_{2,2}$.
Recall (proof of Theorem \ref{thODElimit}) that in this case, $V_2(x) = Cx_1 + x_2 + (C-1)x_3$ is a Lyapunov function
for any $C > B$. Also, the Lyapunov function was defined for the modified system in which the origin was the stationary point.

Let $x^* = (q^*_1, q^*_2, z^*_{1,2})$ be the stationary point in $\AA$.
First assume that, at some time $T$, $V_2(x(T)) = \epsilon_1$, i.e., $C q_1(T) + q_2(T) + (C-1)z_{1,2}(T) = \epsilon_1$.
If $x(t) \in \beta_{V_2}(\epsilon_1)$ for all $t > T$, then it must hold that
\bequ \label{Vball2}
\bsplit
q^*_1 - \frac{\epsilon_1}{C} &< q_1(t) < q_1 + \frac{\epsilon_1}{C}, \qquad q^*_2 - \epsilon_1 < q_2(t) < q^*_2 + \epsilon_1 \qandq \\
z^*_{1,2} - \frac{\epsilon_1}{C-1} &< z_{1,2}(t) < z^*_{1,2} + \frac{\epsilon_1}{C-1}, \quad t \ge T.
\end{split}
\eeq
To make sure $\delta_+(x(t)) < 0$, we use \eqref{driftIneq1}, reorganizing the terms. We need to have
\bes
(\mu_{2,2} - \mu_{1,2})z_{1,2}(t) + r \theta_2 q_2(t) - \theta_1 q_1(t) < -(\lm_1 - \mu_{1,1}m_1) + r \lm_2 + \mu_{2,2}m_2.
\ees
By \eqref{Vball2}, the above inequality holds if
\bes
\bsplit
& (\mu_{2,2} - \mu_{1,2}) \left( z^*_{1,2} + \frac{\epsilon_1}{C-1} \right) + r\theta_2 (q^*_2 + \epsilon_1) -
\theta_1 \left(q^*_1 - \frac{\epsilon_1}{C} \right)  \\
& \quad < -(\lm_1 - \mu_{1,1}m_1) + r \lm_2 + \mu_{2,2}m_2.
\end{split}
\ees
Plugging in the expressions for $q_1^*$, $q^*_2$ and $z^*_{1,2}$, we see that we need to find an $\epsilon_1 > 0$ such that
\bes
(\mu_{2,2} - \mu_{1,2})\frac{\epsilon_1}{C-1} + r \theta_2 \epsilon_1 + \theta_1 \frac{\epsilon_1}{C} < \mu_{2,2}(r+1)(m_2 - z^*_{1,2}).
\ees
We can take $C$ as large as needed, so that the only term that matters on the LHS is $r \theta_2 \epsilon_1$. Hence, we need to have
\bes \label{eps1}
\epsilon_1 < \frac{\mu_{2,2}(r+1)(m_2 - z^*_{1,2})}{r \theta_2} = \frac{| \delta_+^* |}{r\theta_2}.
\ees
Similarly, to make sure that $\delta_{-}(x(t)) > 0$, we use \eqref{driftIneq2}, reorganizing the terms. We need to have
\bes
\bsplit
& r(\mu_{1,2} - \mu_{2,2})z_{1,2}(t) + r\theta_2 q_2(t) - \theta_1 q_1(t) \\
& \quad \quad > -(\lm_1 - \mu_{1,1}m_1) + r(\lm_2 - \mu_{2,2}m_2).
\end{split}
\ees
Using \eqref{Vball2} again (with a different $\ep_2$), we see that it suffices to show that
\beas
\bsplit
& r(\mu_{1,2} - \mu_{2,2}) \left(z_{1,2}^* + \frac{\epsilon_2}{C-1}\right) + r\theta_2 (q^*_2 - \epsilon_2) -
\theta_1 \left( q^*_1 + \frac{\epsilon_2}{C} \right) \\
& \quad  > -(\lm_1 - \mu_{1,1}m_1) + r(\lm_2 - \mu_{2,2}m_2).
\end{split}
\eeas
Once again, plugging in the values of $q_1^*$, $q^*_2$ and $z^*_{1,2}$, and taking $C$ as large as needed, we can choose
$\epsilon_2 > 0$ such that
\bes \label{eps2}
\epsilon_2 < \frac{\mu_{1,2}(r+1)z^*_{1,2}}{r\theta_2} = \frac{\delta_{-}^*}{r\theta_2}.
\ees
Hence, we can take $\af$ as in $(i)$.

For the second case, when $\mu_{1,2} > \mu_{2,2}$, we use the Lyapunov function $V_1(x) = x_1 + x_2$. Using similar reasoning as above, we get
\bes
\epsilon_1 < \frac{\mu_{2,2}(r+1)(m_2 - z^*_{1,2})}{\mu_{1,2} - \mu_{2,2} + \theta_1 + r \theta_2} = \frac{|\delta^*_+|}{\varsigma} \qandq
\epsilon_2 < \frac{\mu_{1,2}(r+1)z^*_{1,2}}{\mu_{1,2} - \mu_{2,2} + \theta_1 + r \theta_2} = \frac{\delta^*_-}{\varsigma}.
\ees
Hence, in this case we can take $\af$ in $(ii)$.
\end{proof}

\section{A Numerical Algorithm to Solve the IVP} \label{secAlg}


\subsection{Computing $\pi_{1,2}(x)$ at a point $x$} \label{secComp1}

The QBD structure in \S \ref{secQBD} allows
us to use established efficient numerical algorithms from \cite{LR99} to solve for the steady state of the QBD to compute $\pi_{1,2}(x)$,
for any given $x \equiv x(t) \in \AA$.
We start by computing the rate matrix $R \equiv R(x)$.
(To simplify notation, we drop the argument $x$, with the understanding that all matrices,
are functions of $x$.)
By Proposition 6.4.2 of \cite{LR99}, $R$ is related to matrices $G$ and $U$ via
\begin{eqnarray}\label{RUG}
G = (-U)^{-1} A_2, \quad
U = A_1 + A_0 G \qandq 
R = A_0 (-U)^{-1}.
\end{eqnarray}
In addition, the matrices $G$ and $R$ are the minimal nonnegative solutions to the quadratic matrix equations
\bequ \label{quadratic}
A_2 + A_1 G + A_0 G^2 = 0 \qandq  A_0 + R A_1 + R^2 A_2 = 0.
\eeq
Hence, if can compute the matrix $G$, then the rate matrix $R$ can be found via \eqref{RUG}. Once $R$ is known,
we use \eqref{boundary} to compute $\af_0$. With $\af_0$ and $R$ in hand, $\pi_{1,2}(x)$ is easily computed
via \eqref{desired2}.

It remains to compute the matrix $G$.
We use the {\em logarithmic reduction} (LR) {\em algorithm} in \S 8.4 of \cite{LR99}, modified to the continuous
case, as in \S 8.7 of \cite{LR99}.  The LR algorithm is quadratically convergent and is numerically well behaved.
These two properties are important, because the matrix $R(x)$ needs to be computed for many values of $x$ when we
numerically solve the IVP \eqref{IVP}. From our experience with this algorithm, it takes fewer than ten iterations to achieve
a $10^{-6}$ precision (when calculating $G$).

\subsection{Computing the Solution $x$}

To compute the solution $x$,
we combine the forward Euler method for solving an ODE with the algorithm to solve for $\pi_{1,2}(x(t))$ described above.
Specifically, we start with a specified initial value $x(0)$, a step-size $h$ and number of iterations $n$, such that $nh = T$.
First, assume that $z_{1,1}(0) = m_1$ and $z_{1,2}(0) + z_{2,2}(0) = m_2$, so that $x(0) \in \rS$.
If $\bar{D}(0) \equiv (q_1(0) - \kappa) - r q_2(0) > 0$ then
$\pi_{1,2}(x(0)) = 1$. If $\bar{D}(0) < 0$ then $\pi_{1,2}(x(0)) = 0$ and if $\bar{D}(0) = 0$ then we check to see whether \eqref{posrec} holds.
If it does, then $x(0) \in \AA$ and we calculate $\pi_{1,2}(x(0))$ as described above.
If $x(0) \in \rS^b - \AA$ then we can still determine the value of $\pi_{1,2}(x(0))$ in the following way:
If
$\delta_{-}(x(t)) = 0 > \delta_{+}(x(t))$,
then we let $\pi_{1,2} (x(t)) = 0$; if instead
$\delta_{-}(x(t)) > 0 = \delta_{+}(x(t))$,
then we let $\pi_{1,2} (x(t)) = 1$.

Given $x(0)$ and $\pi_{1,2}(x(0))$ we can calculate $\Psi(x(0))$ explicitly, and perform the Euler step
$x(h) = x(0) + h \Psi(x(0))$.
We then repeat the procedure for each $k$, $0 \le k \le n-1$, i.e.,
\bequ \label{step}
x((k+1)h) = x(k h) + h \Psi(x(kh)), \quad 0 \le k \le n,
\eeq
where $x(kh)$ is given from the previous iteration, and $\Psi(x(kh))$ can be computed once $\pi_{1,2}(x(kh))$ is found.

If $z_{1,1}(0) < m_1$ or $z_{1,2}(0) + z_{2,2}(0) < m_2$, so that $x(0) \notin \rS$, we use the appropriate fluid model for the alternative region,
as specified in the appendix, where at each Euler step we check to see which fluid model should be applied.

The forward Euler algorithm is known to have an error proportional to the step size $h$,
and to be relatively numerically unstable at times, but it was found to be adequate.
It would be easy to apply more sophisticated algorithms, such as general linear methods,
which have a smaller error, and can be more numerically stable. The only adjustment required is to replace the Euler step in \eqref{step} by
the alternative method.

In the numerical example in \S \ref{secExample} below we let the ratio be $r = 0.8 = 4/5$, so that all the matrices, used in the computations for
$\pi_{1,2}$, are of size $10 \times 10$. It took less than $10$ seconds for the algorithm to terminate
(using a relatively slow, $1$ GB memory, laptop).
The same example, but with $r = 20/25$, so that the matrices are now $50 \times 50$, the algorithm took less than a minute to terminate. Moreover,
the answers to both trials were exactly the same, up to the $7th$ digit.
In both cases, we performed $5000$ Euler steps (each of size $h = 0.01$, so that the termination time is $T = 50$). It is easily
seen that $\pi_{1,2}$ had to be calculated for over $4500$ different points, starting at the time $\pi_{1,2}$ becomes positive
(see Figure \ref{figQBDpi} in the following example).

The validity of the solution can be verified by comparing it to simulation results, as in the example below and others in \cite{PeW09,PeW08}.
There are two other ways to verify the validity:
First, we can check that the solution converges to the stationary point $x^*$,
which can be computed explicitly using \eqref{EqBalance}.
Second, within $\AA$ we can see that the two queues keep at the target ratio $r$,
even though this relation between the two queues is not forced explicitly by the algorithm.

\subsection{A Numerical Example} \label{secExample}

We now provide a numerical example of the algorithm for solving the ODE in \eqn{ode}.
In addition, we added the sample paths of the stochastic processes $Q^n_1$ and $Z^n_{1,2}$, after scaling as in \eqn{fluidScale}, on top of the trajectories of the solution
to their fluid counterparts $q_1$ and $z_{1,2}$.

The model has the same target ratio $r = 0.8$ as in the example in \S \ref{secQBD} with component rate matrices in \eqref{matrices}.
We chose a large queueing system with scaling factor $n = 1000$, so that the stochastic fluctuations do not to hide the general structure of the
simulated sample paths.  We let the ODE model
parameters be $m_1 = m_2 = 1$, $\lm_1 = 1.3$, $\lm_2 = 0.9$, $\mu_{1,1} = \mu_{2,2} = 1$,
$\mu_{1,2} = \mu_{2,1} = 0.8$, $\theta_1 = \theta_2 = 0.3$ and $\kappa = 0$.
The associated queueing model has the same parameters $\mu_{i,j}$ and $\theta_i$,
but the other parameters are multiplied by $n$.  The plots are shown without dividing by $n$.

We ran the algorithm and the simulation for $50$ time units.  We used an Euler step of size $h = 0.01$, so
we performed $5000$ Euler iterations.  In each Euler iteration we performed several iterations to calculate
the matrix $G$ in \eqref{RUG}, which is used to calculate the instantaneous steady-state probability $\pi_{1,2}$.

Figures \ref{figQBDratio}-\ref{figQBDZ12} show $q_1 (t)/q_2(t)$,
 $\pi_{1,2} (x(t)$, $q_1 (t)$ and $z_{1,2} (t)$ as functions of time $t$
for a system initialized empty.
After a short period, the pools fill up.  Then $q_1(t)$ starts to grow,
and immediately then fluid (customers) starts flowing to pool $2$, causing $z_{1,2}(t)$ to grow.
Figures \ref{figQBDratio}-\ref{figQBDZ12} show that, for practical purposes, steady state is achieved for $t \in [10, 20]$.

In Figure \ref{figQBDratio} we see that once $\rS^b$ is hit,
the ratio between the queues is kept at the target ratio $0.8$. This is an evidence for the validity
of the numerical solution, and a strong demonstration of the AP.
In Figure \ref{figQBDpi} we see that initially, while $q_1 = 0$, $\pi_{1,2} = 0$.
This lasts until $z_{2,2}(t) + z_{1,2}(t) = m_2$,
at which time the space $\rS$ is hit, specifically $\rS^b)$, and the averaging begins. Once $\rS^b$
is hit, $\pi_{1,2}$ becomes almost constant, even before the system reaches steady state.
Thus the functions $q_1$, $q_2$ and $z_{1,2}$ have exponential form,
supporting the results of \S \ref{secExpo}.

We got $x(t_n) \equiv  (q_1(t_n), q_2(t_n), z_{1,2}(t_n)) = (0.3639, 0.4550, 0.2385)$
and $\pi_{1,2}(t_n) = 0.2$ when the algorithm terminated.  From \eqref{EqBalance},
$x^* \equiv (q^*_1, q^*_2, z^*_{1,2}) = (0.3667, 0.4595, 0.2375)$.  From \eqref{piSS},
we get $\pi_{1,2}^* = 0.2$.

\begin{figure}[h!]
  \begin{minipage}[t]{.45\textwidth}
    \begin{center}
      \epsfig{file=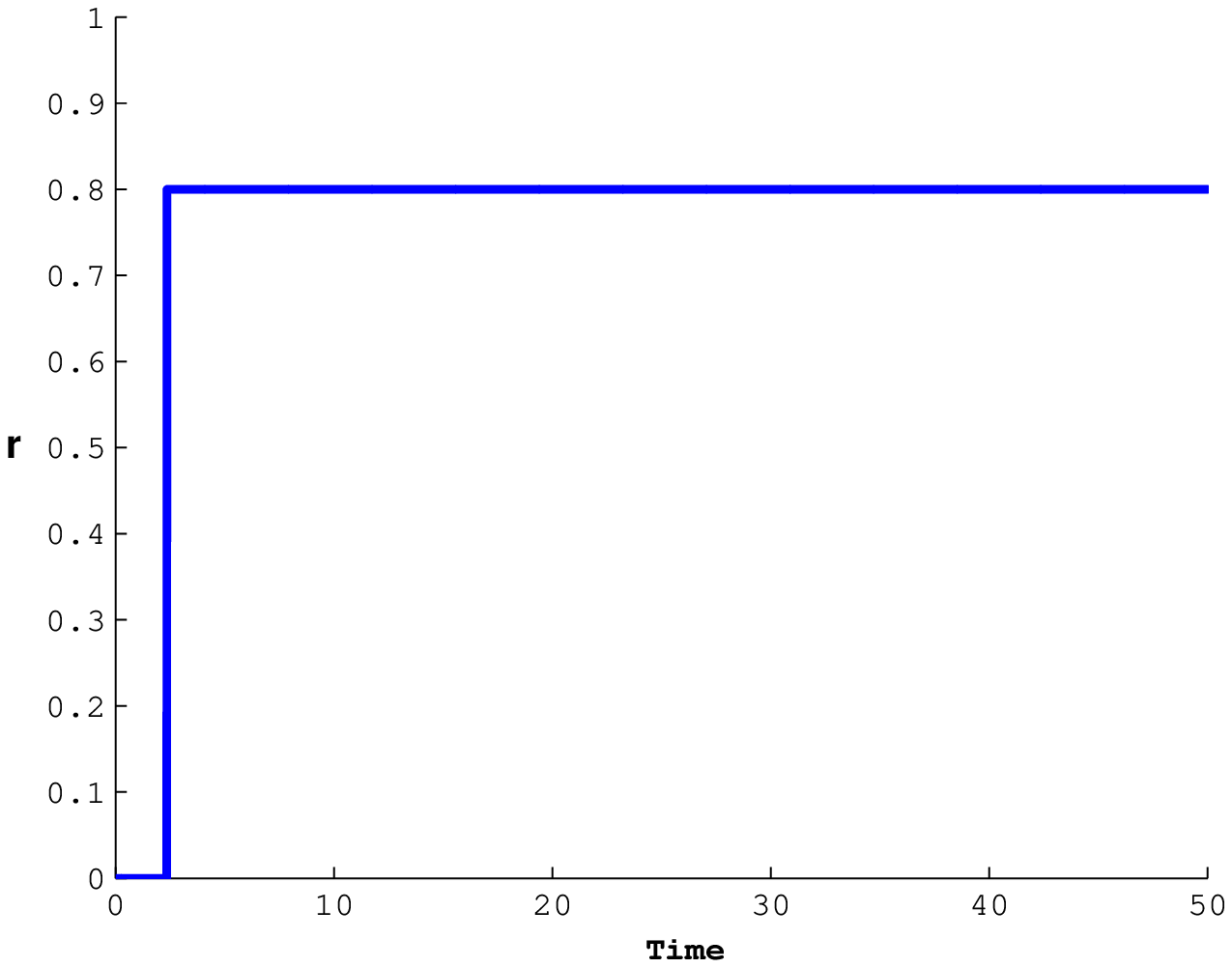, width=1.2\textwidth, height=.15\textheight}
      \caption{ratio between the queues.}
      \label{figQBDratio}
    \end{center}
  \end{minipage}
  \hfill
   \begin{minipage}[t]{.45\textwidth}
    \begin{center}
      \epsfig{file=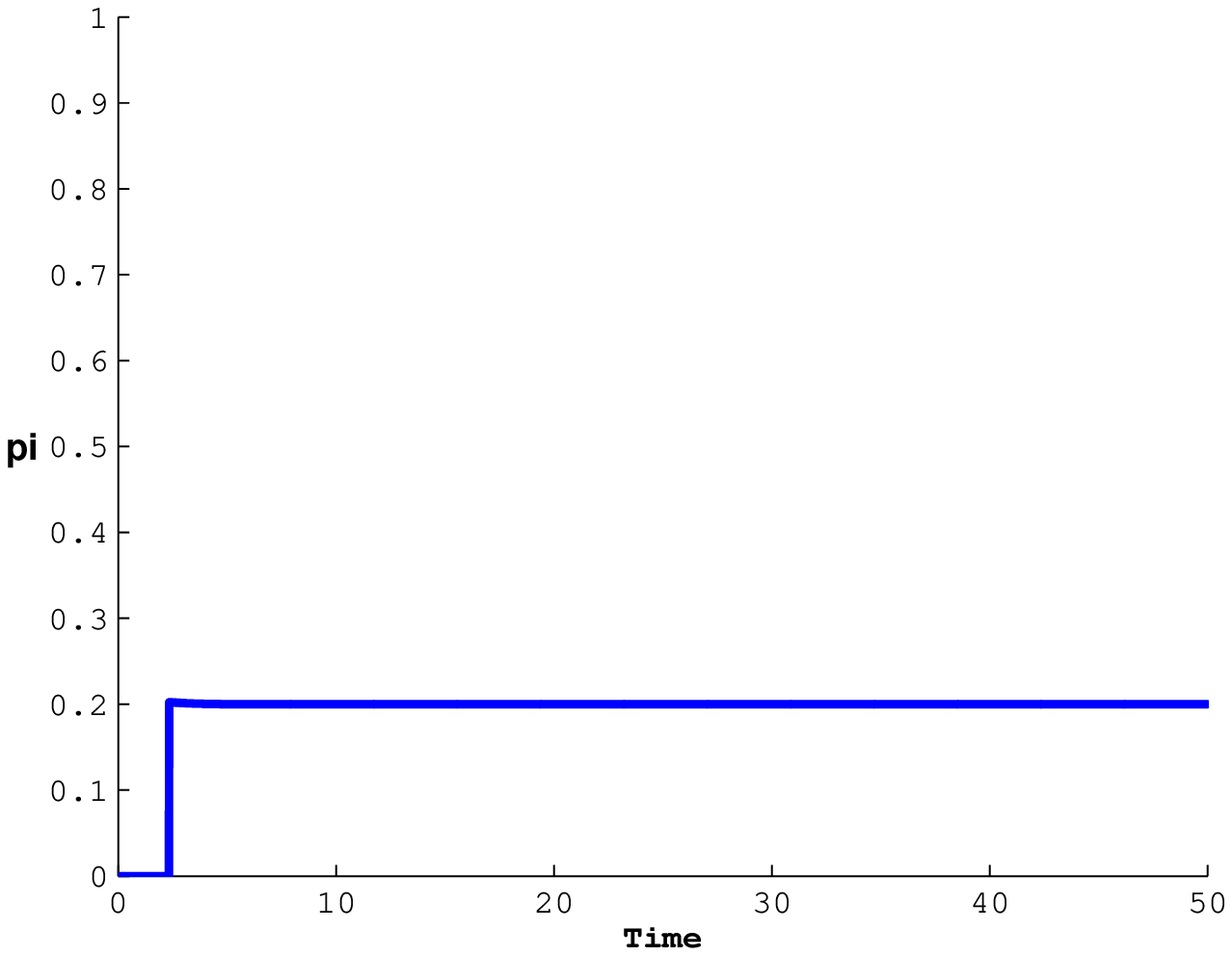, width=1.2\textwidth, height=.15\textheight}
      \caption{$\pi_{1,2}$ calculated at each iteration.}
      \label{figQBDpi}
    \end{center}
  \end{minipage}
  \hfill
\end{figure}

\begin{figure}[h!]
  \begin{minipage}[t]{.45\textwidth}
    \begin{center}
      \epsfig{file=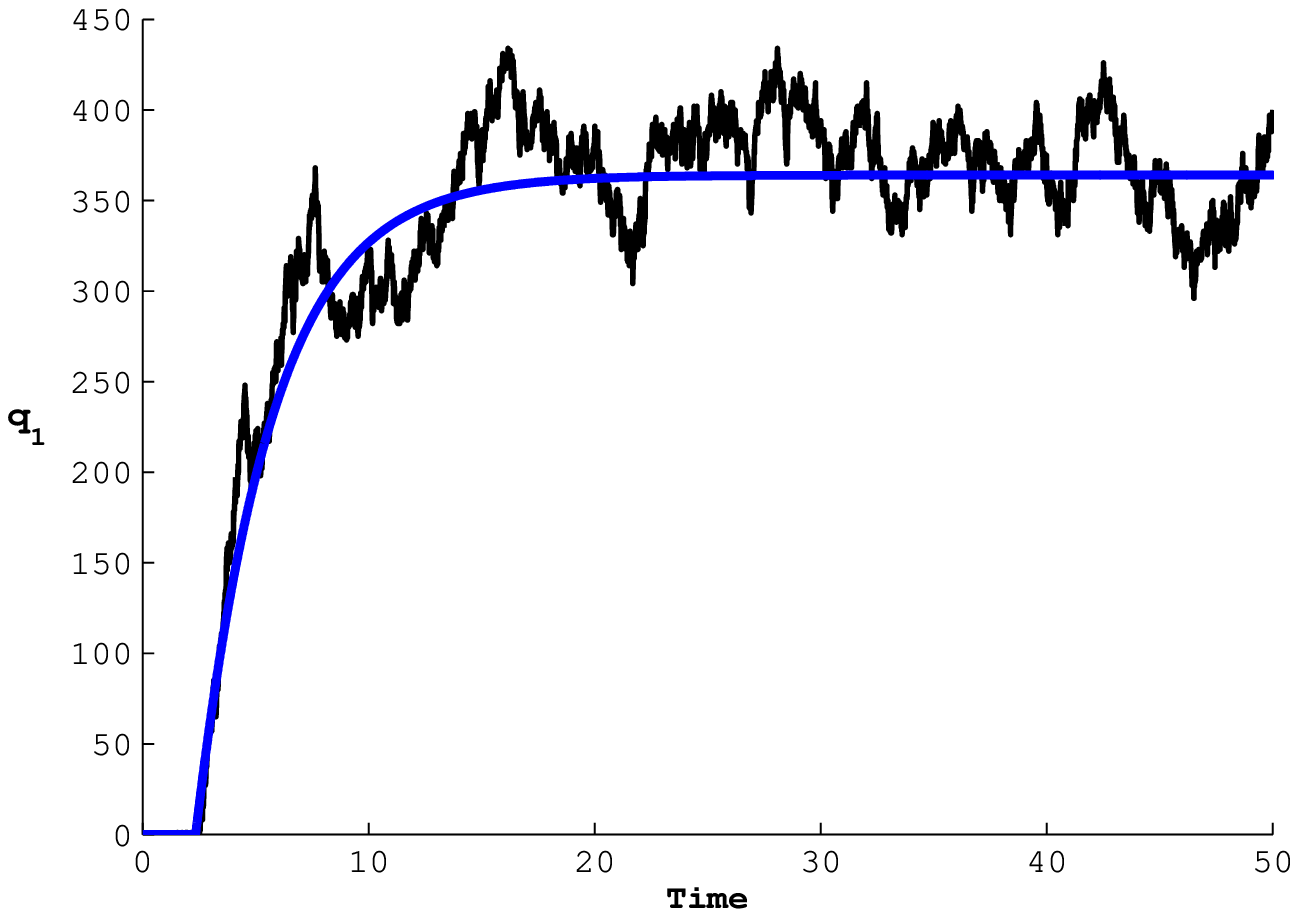, width=1.2\textwidth, height=.15\textheight}
      \caption{trajectory of $q_1$ together with a simulated sample path of the stochastic process $Q_1$
      in a system initializing empty.}
      \label{figQBDQ1}
    \end{center}
  \end{minipage}
  \hfill
  \begin{minipage}[t]{.45\textwidth}
    \begin{center}
      \epsfig{file=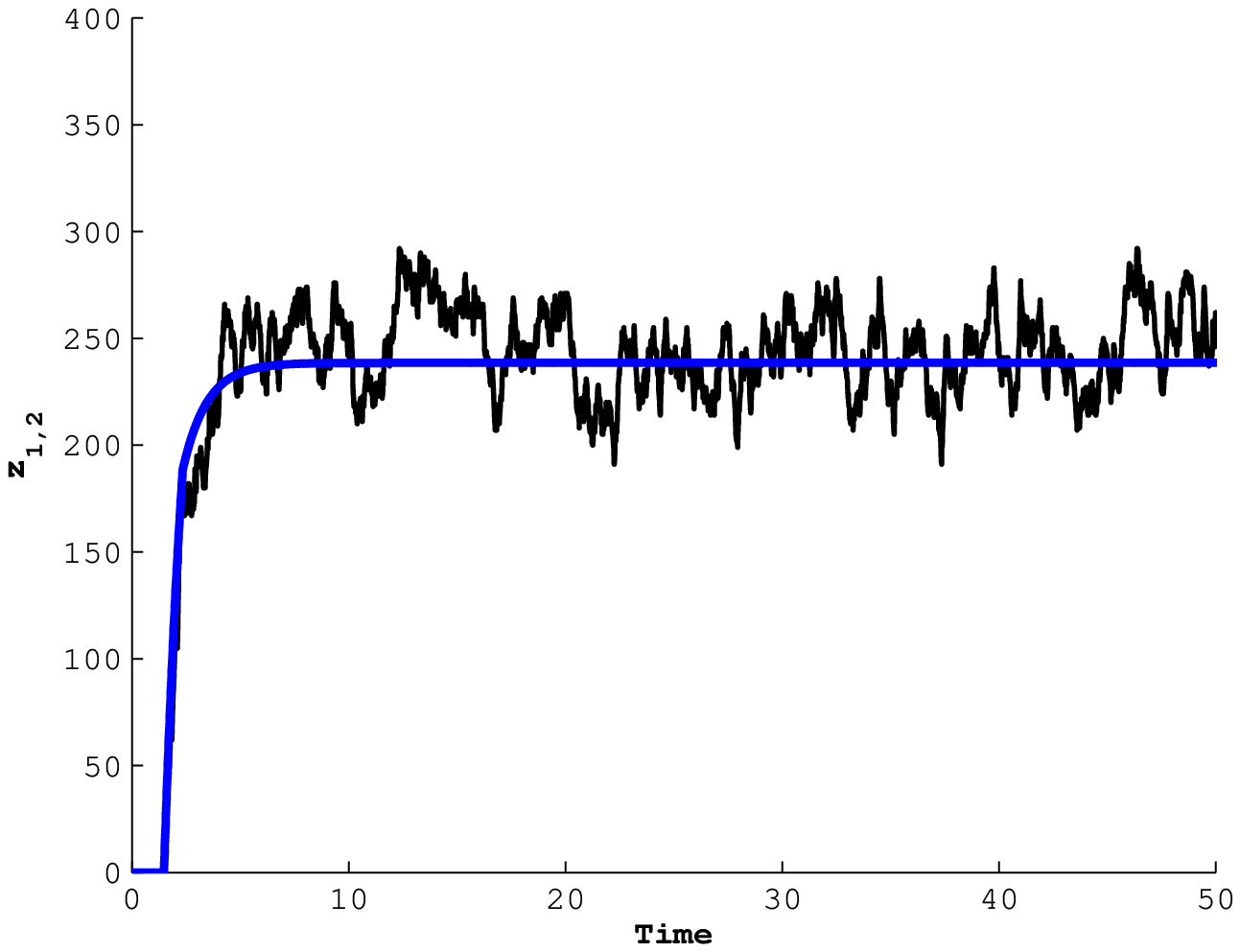, width=1.2\textwidth, height=.15\textheight}
      \caption{trajectory of $z_{1,2}$ together with a simulated sample path of the stochastic process $Z_{1,2}$
      in a system initializing empty.}
      \label{figQBDZ12}
    \end{center}
  \end{minipage}
  \hfill
\end{figure}

Before solving the ODE, we can apply
Theorem \ref{thVerify} to conclude that the solution will remain in $\AA$ after it first hits $\AA$.

\section{Proof of Theorem \ref{thLipsz}}\label{secProofThLipsz}
We have previously observed that the first two conclusions involving $\rS^+$ and $\rS^-$ are valid.
We now prove the three conclusions involving $\AA$, $\AA^{+}$ and $\AA^{-}$.  We will
use the fact that a function mapping a convex compact subset of $\RR^m$ to $\RR^n$ is
Lipschitz on that domain if it has a bounded derivative.
Since we can always work with balls in $\RR^m$ (which are convex with compact closure), that in turn implies that a function
mapping an open subset of $\RR^m$ to $\RR^n$
is locally Lipschitz whenever it has a bounded derivative on each ball in the domain;
e.g., see Lemma 3.2 of \cite{Khalil}.  The three sets $\AA$, $\AA^{+}$ and $\AA^{-}$ are convex.
The key is what happens in $\AA$.

For understanding, it is helpful to first
verify this theorem in the special case $r =1$, where the QBD process reduces to a BD process.
Thus we first give a proof for that special case.

\paragraph{Proof for the special case $r = 1$}
We use the fact that the ODE remains within $\AA$ if it starts in $\AA$, so we are regarding $\AA$ as an open connected convex subset of $\RR^2$.
The key component of the function $\Psi$ in $\AA$ is $\pi_{1,2}$.  We exploit the explicit representations in \eqref{piRep1} and \eqref{piRep3}.
From \eqn{bd1}--\eqn{bd4}, the partial derivatives of $\lambda^\pm (x)$ and $\mu^\pm (x)$ with respect to the three components of $x$, i.e., $q_1$, $q_2$ and $z_{1,2}$,
are constants.
From \eqref{piRep1} and \eqref{piRep3},
we see that the partial derivatives of $\pi_{1,2} (x)$ with respect to each of the three components of $x$ exist, are finite and continuous.
That takes care of $\AA$.

We next consider $\AA^{-}$ and $\AA^{+}$; the reasoning for these two cases is essentially the same, with \eqref{piRep1} making it quite elementary.
 We see that  $\pi_{1,2} (x) \ra 0$ and these partial derivatives approach finite limits
as $x \ra x_b \in \AA^-$ for $x \in \AA$, while $\pi_{1,2} (x) \ra 1$ and these partial derivatives approach finite limits
as $x \ra x_b \in \AA^+$ for $x \in \AA$.  In both cases we have a conventional heavy-traffic limit:  $\rho^\pm (x) \uparrow 1$
as $x \ra x_b$.
Hence, the partial derivatives of $\pi_{1,2} (x)$ are continuous and bounded on $\rS^b$.
As a consequence, for any $\epsilon$-ball in $\rS - \rS^-$ about $x$ in $\AA^{+}$, there exists a constant $K$
such that $|\pi_{1,2} (x_1) - \pi_{1,2} (x_2)| \le K \| x_1 - x_2\|_3$
for all $x_1$ and $x_2$ in the $\epsilon$-ball, where $\| \cdot\|_3$ is the maximum norm on $\RR^3$.
A similar statement applies to $\AA^{-}$.

Hence we have completed the proof for $r = 1$.
In closing, note that we cannot conclude that $\pi_{1,2} (x)$ is even continuous on all of $\rS$, because for $x \in \AA$
we may have a sequence $\{x_n: n \ge 1\}$ with $x_n \in \rS^+$ for all $n$ (or  $x_n \in \rS^-$ for all $n$), with $x_n \ra x$ as $n \tinf$,
$\pi_{1,2} (x_n) = 1$ for all $n$ (or $=0$), while $0 <\pi_{1,2} (x) < 1$.

We now treat the general case.
\paragraph{Proof of Theorem \ref{thLipsz} in the general case}
We first consider $\AA$.  As in the case $r=1$,
we use the fact that the ODE remains within $\AA$ if it starts in $\AA$, so we are regarding $\AA$ as an open connected convex subset of $\RR^2$.
We will look at $\pi_{1,2}$, and thus the QBD, as a function of the variable $x \in \rA$, which is an element of $\RR^3$.
By the definition of the matrices $A_0$, $A_1$ and $A_2$ in \eqref{sub}
(see also the example in \S \ref{secQBD}), these matrices are twice differentiable with respect to any of their elements.
By the definition of the rates
in \eqref{bd1}-\eqref{bd4}, which are the elements of the matrices $A_0$, $A_1$ and $A_2$, these matrix elements in turn
have constant partial derivatives with respect to each of the three real components of $x$ at each $x \in \rA$, i.e., with respect to $q_1$, $q_2$ and $z_{1,2}$.
It follows from Theorem 2.3 in He ~\cite{H93} that the rate matrix $R$ in \eqref{steady1}, which is the minimal
nonnegative solution to the quadratic matrix equation $A_0 + R A_1 + R^2 A_2 = 0$, is also twice differentiable
with respect to the matrix elements of $A_0$, $A_1$ and $A_2$, and thus also with respect to the three real components of $x$ at each $x \in \rA$.

It thus suffices to look at the derivatives with respect to one of the elements of the matrices $A_0$, $A_1$ or $A_2$.
It follows from the normalizing expression in \eqref{boundary}
and the differentiability of $R$, that $\af_0$ is also differentiable. Hence, from \eqref{desired2}, we see that
$\pi_{1,2}$ is differentiable at each $x \in \rA$, with
\bequ \label{piDeriv}
\pi_{1,2}' = \af_0' (I - R)^{-1} \mathbf{1_+} + \af_0 (I - R)^{-1} R' (I - R)^{-1} \mathbf{1_+}.
\eeq
By differentiating \eqref{boundary}, we have
\bequ \label{piDeriv2}
\af_0' (I - R)^{-1} \mathbf{1} + \af_0 (I - R)^{-1} R' (I - R)^{-1} \mathbf{1} = 0,
\eeq
so that $\af'_0$ is continuous. The continuity of $R'$ and $\af'_0$ with respect to one of the elements of the matrices $A_0$, $A_1$ or $A_2$
 implies that the derivative $\pi_{1,2}'$ with respect to one of the elements of the matrices $A_0$, $A_1$ or $A_2$ is finite and continuous on $\rA$,
which in turn implies that the partial derivatives with respect to the three real components of $x$ at each $x \in \rA$ are finite and continuous as well.
Hence, $\Psi$ is locally Lipschitz continuous on $\rA$, as claimed.

We next show that $\pi_{1,2}$ and thus $\Psi$ are locally Lipschitz continuous in neighborhoods of points in
$\AA^{+}$ within $\rS - \rS^-$ and of points in $\AA^{-}$ within $\rS - \rS^+$.  We will only consider $\AA^{+}$, because the two cases are essentially the same.
In both cases, the situation is complicated starting from \eqn{piDeriv}
because the entries of $\alpha_0 (x)$ become negligible, while the entries of $(I - R)^{-1} (x)$ explode as $x \ra x_b$.
However, the two different limits cancel their effect.  We exploit \eqref{piRep2}.
  The representation in \eqn{piRep2} is convenient because now $\af_0 (x) \ra \af_0 (x_b)$ as $x \ra x_b$, where $\af_0 (x_b)$
is finite.  All key asymptotics take place in $R^+$.

Since the crucial asymptotics involves only $\RR^+$, we see that we only need carefully consider one of the two regions, in this case the upper one.
To obtain results about $\RR^+$, from a process perspective, it suffices to replace the given QBD by a new QBD with the upper region and reflection at the lower boundary.
The new QBD model involving only $\RR^+$ is equivalent to a relatively simple single-server queue.  The net input is a linear combination of four Poisson processes,
and so has stationary and independent increments.  The queue length process in the revised model is an elementary $MAP/MSP/1$ queue, as in \S 4 of \cite{ACW94}, which has
as QBD representation with rate matrix $R^+$.

For the asymptotics, the key quantities are the spectral radii of the matrices $R^+ (x)$ and $R^- (x)$, say $\eta^+ (x)$ and $\eta^- (x)$, and the way that these depend on the
drifts $\delta_{+} (x)$ and $\delta_{-} (x)$ as $x \ra x_b$.  The spectral radius $\eta^+ (x)$ is the unique root in the interval $(0,1)$
of the equation $det[A_0^+ (x) + A_1^+ (x) \eta + A_2^+ (x) \eta^2] = 0$, and similarly for $\eta^- (x)$; see (39) on p. 241 of \cite{N86},
the Appendix of \cite{N89} and \S 4 of \cite{ACW94}.
We see that $\eta^+ (x) \ra \eta^+ (x_b) = 1$ and $\eta^- (x) \ra \eta^- (x_b) < 1$ as $x \ra x_b \in \AA^{+}$.
In general, we can represent powers of the matrix $R$ (and similarly for $R^+$ and $R^-$) asymptotically as
\bequ \label{eigen1}
R^n = v u \eta^n + o( \eta^n) \qasq n \tinf,
\eeq
where $u$ and $v$ are the left and right eigenvectors of the eigenvalue $\eta$, respectively, normalized so that $u \bf{1} = 1$ and $u v = 1$.
Moreover, as $\eta \ra 1$, the matrix inverse $(I-R)^{-1}$ is dominated by these terms.

Hence, we can do a heavy-traffic expansion of $\eta^+ (x)$ and the related quantities as $x \ra x_b \in \AA^{+}$ with $x \in \AA$,
as in \cite{CW94}; see the Appendix of \cite{N89}.
As $x \ra x_b$, all quantities in \eqn{piRep2} have finite continuous limits as $x \ra x_b \in \AA^{+}$ except $(I - R^+ (x))^{-1}$.
We first have $|\delta_{+} (x)| \ra 0$ and $\delta_{-} (x) \ra \delta_{-} (x_b)$, where $0 < \delta_{-} (x_b) < \infty$.
We then obtain
\bequ \label{expan}
\bsplit
1 - \eta^+ (x) & = c(x_b) |\delta_{+} (x)| + o ( |\delta_{+} (x)|)  \\
(I - R (x)^+)^{-1}  & = \frac{v^+ (x_b) u^+ (x_b)}{1 - \eta^+ (x)} +  o((1 - \eta^+ (x))^{-1}) \\
&  = \frac{v^+ (x_b) u^+ (x_b)}{c (x_b) |\delta_{+} (x)|} + o(|\delta_{+} (x)|^{-1})
\end{split}
\eeq
as $x \ra x_b$ and $|\delta_{+} (x)| \ra 0$, where $c$, $v^+$ and $u^+$ are continuous functions of $x_b$ on $\AA^{+}$.  The asymptotic relations in \eqn{expan}
together with \eqn{piRep2} imply that
\bequ \label{expan2}
|\pi_{1,2} (x) - \pi_{1,2} (x_b)|  = |\pi_{1,2} (x) - 1| =  |- r(x)/(1 + r(x)|,
\eeq
where
\bequ \label{expan3}
r (x)  \equiv \frac{\af_0^- (I - R^-)^{-1} \bf{1}}{\af_0^+ (I - R^+)^{-1} \bf{1}} \sim h(x_b)  |\delta_{+} (x)|
\eeq
as $x \ra x_b$ and $|\delta_{+} (x)| \ra 0$, where $h$ is a continuous function on $\AA^{+}$.
Hence, there exist constants $K_1$ and $K_2$ such that
\bequ \label{expan4}
|\pi_{1,2} (x) - \pi_{1,2} (x_b)| \le  K_1 |\delta_{+} (x)| \le K_2 \| x - x_b\|_3
\eeq
for all $x$ sufficiently close to $x_b$.
Finally, we can apply the triangle inequality with \eqn{expan4} to obtain $|\pi_{1,2} (x_1) - \pi_{1,2} (x_2)| \le 2 K_2 \| x_1 - x_2\|_3$
for $x_1, x_2$ in an $\epsilon$ ball about $x_b$ in $\rS - \rS^-$.
Hence, $\pi_{1,2} (x)$ and thus $\Psi$ are locally Lipschitz continuous on $\AA^{+}$ within $\rS -\rS^-$.
Hence the proof is complete.
\qed

\section*{Acknowledgments}
This research began while the first author was completing his Ph.D. in the Department of Industrial Engineering and Operations Research
at Columbia University and was completed while he held a postdoctoral fellowship at C.W.I. in Amsterdam.
This research was partly supported by NSF grants DMI-0457095 and CMMI 0948190.



\newpage

\begin{center}
\textbf{Appendix}
\end{center}

\begin{appendix}

\section{Overview}\label{secOverview}

In this appendix we present some supplementary material.
In \S \ref{secTransient} we analyze the system with an initial underloaded state.  In that case we show that the approximating fluid models
lead to our main ODE in finite time.
In \S \ref{secAlg2} we elaborate on the algorithm in \S \ref{secAlg} and give two more numerical examples, including one where the solution, starting empty,
first enters $\rS$ in $\rS^+$, and then moves from $\rS^+$ to $\AA$ and then $\rS^-$, with $\pi_{1,2}$ experiencing a discontinuity.
In \S \ref{secProofs} we give some omitted proofs.
Finally, in \S \ref{secCon} we draw conclusions and mention remaining open problems.

\section{Transient Behavior Before Hitting $\rS$} \label{secTransient}

Recall that the FQR-T control is designed to respond to unexpected overloads. We assume that the two classes
operate independently until a time at which the arrival rates change, and the system becomes overloaded.
Let $0$ be the time that the arrival rates change.
We thus think of a system in steady state at time $0$ when the arrival rates change, with
\bequ \label{initial}
q_1 (0) = q_2 (0) = z_{1,2}(0) = z_{2,1}(0) = 0.
\eeq
In particular, $q_1(0) \le \kappa$, and no sharing is taking place.
A well-operated system tends to have a critically loaded fluid limit, yielding steady-state values
$z_{1,1}(0) = m_1$ and $z_{2,2}(0) = m_2$, but we could also have an underloaded steady state, with
$z_{1,1}(0) < m_1$ and/or $z_{2,2}(0) < m_2$ as well.

The ODE in \eqref{ode}-\eqref{odeDetails} can be regarded as the fluid limit of a sequence of overloaded queueing models.
Class $1$ was assumed to be overloaded due to the arrival rate being larger than the total service rate of
service pool $1$, while class $2$ was overloaded either because its arrival rate was also too large (but less so than class $1$),
or because
pool $2$ was helping class-$1$ customers.
For the ODE, the system overload assumption translates into having $z_{1,1}(t) = m_1$ and $z_{1,2}(t) + z_{2,2}(t) = m_2$ for all $t$,
so that the state space for the fluid limit was taken to be $\rS$.
(The space $\rS$ was defined in \S \ref{secMain} and \S \ref{secUnique}.)
However, if either $z_{1,1}(0) < m_1$ or $z_{2,2}(0) < m_2$, then the initial state is not in $\rS$,
so we cannot use the ODE \eqref{ode} to describe the
system. There is a transient period $[0, t_{\rS})$ during which the two service pools fill up, but the system
is not yet overloaded.

If sharing is eventually going to take place (i.e., if $x^*$ is in either $\AA$ or $\rS^+$),
then with initial conditions as in \eqref{initial}, we should certainly hit $\rS^b$.
Sharing will begin only at a time $T$ such that $q_1(T) - rq_2(T) = \kappa$.
In this section we show that, if indeed $x^* \in \AA \cup \rS^+$, then $T < \infty$, where
\bequ \label{T}
T \equiv \inf \{ t \ge 0 : \mbox{$x(t) \in \rS^b$}\}.
\eeq

The transient period of the fluid system can be divided into two distinct periods:
The first transient period, on the interval $[0, T)$, lasts until the fluid limit hits $\rS^b$.
The second transient period is the one starting at the hitting time $T$, and is
described by the ODE \eqref{odeDetails}. This period was analyzed in the previous sections.
The first transient period is described by different ODE's, depending on the state of the system.
These ODE's, for the initial condition in \eqref{initial}, are given in the proof of Theorem \ref{thT} below.

We shall prove that $T < \infty$ under the extra assumption that at no time during $[0, T)$ is $z_{2,1} > 0$.
The assumption can be verified directly by solving the fluid model of the first transient period.
We discuss this condition after the proof of Theorem \ref{thT}.

\begin{theorem} \label{thT}
If $x^* \in \AA \cup \rS^+$, if \eqref{initial} holds and if $z_{2,1}(t) \equiv 0$ for all $t \ge 0$,
then $T < \infty$, for $T$ in \eqref{T}.
\end{theorem}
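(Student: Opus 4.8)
\emph{Proof proposal.} The plan is to exploit the hypothesis $z_{2,1}(t)\equiv 0$ to decouple the two classes on the interval $[0,T)$, reducing the dynamics there to two uncoupled Erlang-A fluid models, and then to contradict $T=\infty$ using the region characterization of the stationary point in Corollary~\ref{corRegions}.

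First I would identify the ODEs governing the solution on $[0,T)$. For $t<T$ we have $q_1(t)-rq_2(t)<\kappa$, with equality attained only at $T$, so under FQR-T no class-$1$ customer is ever routed to pool $2$ before $T$; thus $z_{1,2}(t)=0$ on $[0,T)$. Combined with the standing hypothesis $z_{2,1}(t)\equiv 0$, pool $1$ serves only class $1$ and pool $2$ serves only class $2$ throughout $[0,T)$, so $(q_1,z_{1,1})$ and $(q_2,z_{2,2})$ each evolve exactly as the isolated $M/M/m_i+M$ fluid model started empty. I would write these out explicitly: the pool-$i$ service content grows as in the fluid $M/M/m_i$ model with an empty queue until it reaches $m_i$, and thereafter the queue obeys the scalar linear ODE $\dot q_i=\lambda_i-\mu_{i,i}m_i-\theta_i q_i$. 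Since class $1$ is overloaded ($\lambda_1>\mu_{1,1}m_1$), pool $1$ fills in finite time and $q_1(t)\uparrow q_1^a$; for class $2$, either pool $2$ also fills and $q_2(t)\to q_2^a>0$, or it never fills and $q_2(t)\equiv 0=q_2^a$. In all cases, \emph{provided these isolated dynamics remain valid for all $t\ge 0$}, we have $q_1(t)\to q_1^a$ and $q_2(t)\to q_2^a$ as $t\to\infty$, since each limiting queue ODE is scalar linear with an attracting fixed point (cf.\ Theorem~2.3 of \cite{W04}).

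Next I would record the region inequality. By Corollary~\ref{corRegions}, if $x^*\in\AA$ then $rq_2^a<q_1^a-\kappa$, while if $x^*\in\rS^+$ then $q_1^a-\kappa>\frac{r\lambda_2}{\theta_2}+\frac{\mu_{1,2}m_2}{\theta_1}\ge\frac{r\lambda_2}{\theta_2}\ge rq_2^a$, using $q_2^a\le\lambda_2/\theta_2$; hence in either case $q_1^a-rq_2^a>\kappa$. Finally, put $d(t)\equiv q_1(t)-rq_2(t)$, which is continuous with $d(0)=0$. If $\kappa=0$ then $d(0)=\kappa$, so $x(0)\in\rS^b$ and $T=0$. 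If $\kappa>0$, suppose $T=\infty$; then $d(t)<\kappa$ for every $t\ge 0$, since by continuity $d$ cannot cross $\kappa$ without attaining it, so the isolated dynamics above are valid on all of $[0,\infty)$, whence $d(t)\to q_1^a-rq_2^a$ as $t\to\infty$ and therefore $q_1^a-rq_2^a\le\kappa$, contradicting the region inequality. Hence $T<\infty$.

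The step I expect to be the main obstacle is the bookkeeping in the first part: spelling out the pre-$\rS$ transient ODEs for the initial condition \eqref{initial} (including the sub-case in which class $2$ is individually underloaded so pool $2$ never fills and $q_2\equiv 0$), and cleanly justifying that $z_{1,2}\equiv 0$ on $[0,T)$ together with the hypothesis $z_{2,1}\equiv 0$ reduces the evolution to two uncoupled Erlang-A fluid models, so that the standard convergences $q_i(t)\to q_i^a$ apply verbatim. Once that is in place, the remainder is a short continuity/intermediate-value argument against $T=\infty$ combined with Corollary~\ref{corRegions}.
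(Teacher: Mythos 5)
There is a genuine gap. Your argument rests on the claim that $z_{1,2}(t)=0$ throughout $[0,T)$, so that the two classes evolve as uncoupled Erlang-A fluid models until the weighted difference $d(t)=q_1(t)-rq_2(t)$ first reaches $\kappa$. That identification of $T$ with the first time $d(t)=\kappa$ is incorrect: $T$ is the hitting time of $\rS^b$, which requires in addition that both pools be full ($z_{1,1}=m_1$ and $z_{1,2}+z_{2,2}=m_2$); this is the whole point of the pre-$\rS$ transient analysis. The case your argument cannot handle is $s^a_2>0$ (pool $2$ individually underloaded, $q^a_2=0$). There, pool $2$ never fills on class-$2$ fluid alone, sharing is activated as soon as $q_1$ reaches $\kappa$ with $q_2=0$ (at a time $t_2$ strictly before $T$), and on $(t_2,T)$ one has $z_{1,2}>0$ and increasing while $q_1\equiv\kappa$, $q_2\equiv 0$. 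So the decoupling fails, $d(t)$ equals $\kappa$ well before $\rS^b$ is reached, and your fallback ``or pool $2$ never fills and $q_2\equiv 0$'' leaves $T=\infty$ unexcluded under your own reading. Finiteness of $T$ in this case comes from a different mechanism entirely: the ODE $\dot z_{1,2}=(\lambda_1-\mu_{1,1}m_1-\theta_1\kappa)-\mu_{1,2}z_{1,2}$ on $[t_2,T)$, whose attracting value $\theta_1(q^a_1-\kappa)/\mu_{1,2}$ dominates the spare capacity $s^a_2$ precisely by Assumption A(I), so that pool $2$ is filled by class-$1$ fluid. Your proposal never invokes Assumption A(I), which is a sign that this case has been lost. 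Relatedly, your claim that $\kappa=0$ gives $T=0$ is wrong for the same reason: the system starts empty, so $x(0)\notin\rS$ even though $q_1(0)-rq_2(0)=0$.

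Your treatment of the complementary case $q^a_2>0$ is essentially the paper's: the isolated queues increase monotonically to $q^a_1$ and $q^a_2$, and Corollary \ref{corRegions} gives $rq^a_2<q^a_1-\kappa$ in both $\AA$ and $\rS^+$ (your derivation of that inequality for $x^*\in\rS^+$ via $q^a_2\le\lambda_2/\theta_2$ is correct and matches the role of \eqref{ineq3} in the paper). To repair the proposal you would need to split on whether $s^a_2>0$ or $q^a_2>0$ from the outset, and in the first case replace the intermediate-value argument on $d(t)$ by the explicit filling of pool $2$ through $z_{1,2}$, using Assumption A(I).
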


\begin{proof}
We start by developing the ODE to describe the system before hitting $\rS$.
As before, we do not consider the original queueing model and prove convergence to the appropriate fluid limit,
but instead we develop the ODE directly.
We first consider the case in $s^a_2 > 0$ (so that $q^a_2 = 0$), i.e., class $2$
experiences no overload by itself (before pool $2$ starts serving class-$1$ fluid).
First, there is an initial period in which the pools are being filled with fluid.
It is easy to see that as long as neither pool is full, the pool-content functions $z_{i,i} (t)$
behave as the fluid approximations for the number in system at time $t$ in an $M/M/\infty$ queueing model
with arrival rate $\lm_i$ and service rate $\mu_{i,i}$, $i = 1,2$; e.g., see \cite{PTW07} (where it assumed that $\lm = \mu$,
so that $\lm / \mu = 1$). Therefore, the system evolution is described by the pair of ODE's
\bes
\bsplit
\dot{z}_{1,1}(t) & = \lm_1 - \mu_{1,1}z_{1,1}(t), \quad z_{1,1}(0) = \zeta_1 \\
\dot{z}_{2,2}(t) & = \lm_2 - \mu_{2,2}z_{2,2}(t), \quad z_{2,2}(0) = \zeta_2,
\end{split}
\ees
and the unique solution to each ODE is
\bes
z_{i,i} (t) = \frac{\lm_i}{\mu_{i,i}} + \left( \zeta_i - \frac{\lm_i}{\mu_{i,i}} \right) e^{-\mu_{i,i}t}, \quad t \ge 0, \quad i = 1,2.
\ees
These ODE's describe the dynamics of the two classes until one of the pools is full, i.e.,
until the time
\bequ \label{t1}
t_1 \equiv \min_{i = 1,2}\inf\{t \ge 0 : z_{i,i}(t) = m_i \}.
\eeq
Since we assume that $s^a_2 > 0$, $t_1$ is the time at which $z_{1,1}(t) = m_1$,
and at this time we need to start considering $q_1$.
Clearly, $q_1$ evolves independently of class $2$ until $q_1(t) = \kappa$ (when sharing is initialized). Let
\bequ \label{t2}
t_2 \equiv \inf \{ t \ge t_1 : q_1(t) = \kappa \}.
\eeq
Recall that $\kappa$ may be equal to $0$, in which case $t_1 = t_2$. If $t_2 > t_1$, then
$q_1(t)$, $t \in [t_1, t_2)$, evolves as the fluid approximation for the queue-length process in an Erlang-A model
operating in the ED MS-HT regime, as in \cite{W04}.
The ODE describing the evolution of $q_1$ is
\bequ \label{qErlang}
\bsplit
\dot{q}_1(t)  = \lm_1 - \mu_{1,1}m_1 - \theta_1 q_1(t), \quad t_1 \le t < t_2, \quad \mbox{with} \quad
q_1(t_1)  = 0,
\end{split}
\eeq
and its unique solution is
\bes
q_1(t) = \frac{\lm_1 - \mu_{1,1}m_1}{\theta_1} \left( 1 - e^{-\theta_1 (t - t_1)} \right), \quad t_1 \le t < t_2.
\ees
Now, since $q_1(t_2) = \kappa$ and $q_2(t_2) = 0$, class-$1$ fluid starts flowing to service pool $2$, so that $z_{1,2}$ starts increasing.
There is a time $t_3$ such that, for $t \in [t_2, t_3)$, $q_1(t) = \kappa$ , $q_2(t) = 0$ and all the excess class-$1$ fluid, that is not
lost due to abandonment, is flowing to pool $2$. Hence, $z_{1,2}$ satisfies the ODE
\bes
\bsplit
\dot{z}_{1,2}(t)  = (\lm_1 - \mu_{1,1} m_1 - \theta_1 \kappa) - \mu_{1,2}z_{1,2}(t), \quad t_2 \le t < t_3, \quad \mbox{with} \quad
z_{1,2}(t_2)  = 0,
\end{split}
\ees
whose unique solution is
\bes
z_{1,2}(t) = \frac{\lm_1 - \mu_{1,1} m_1 - \theta_1 \kappa}{\mu_{1,2}} \left( 1 - e^{-\mu_{1,2}(t - t_2)} \right), \quad t_2 \le t < t_3.
\ees
Hence, $t_3 \equiv \inf \{ t \ge t_2 : z_{1,2}(t) + z_{2,2}(t) = m_2 \}$, so that at time $t_3$ both service pools are full, with
$q_1(t_3) = \kappa$, $q_2(t_3) = 0$ and $q_1(t_3) - rq_2(t_3) = \kappa$.
It follows that $t_3$ is the time at which the fluid model hits the space $\rS^b$, and the first transient period is over,
i.e., $t_3 = T$ for $T$ in \eqref{T}.

Now we consider the second case in which $q^a_2 > 0$. In this case there are different scenarios:
In the first scenario, pool $2$ can be filled before pool $1$,
so that $t_1 = \inf \{ t \ge 0 : z_{2,2} = m_2 \}$, for $t_1$ in \eqref{t1}. In that case $q_2$ begins to increase at time $t_1$,
evolving according to the ODE of the overloaded Erlang-A model
\bes
\dot{q}_2(t) = \lm_2 - \mu_{2,2}m_2 - \theta_2 q_2(t).
\ees
However, by the assumption of the theorem, we have ruled out the case in which $q_1(t) - r_{2,1} q_2(t) = \kappa_{2,1}$,
 so that no class-$2$ fluid will flow
to pool $1$.  Hence, from the beginning (time $0$), $z_{1,1}$ increases until time $t_1' \ge t_1$ at which $z_{1,1} = m_1$.
Then $q_1$ increases, satisfying \eqref{qErlang} with $q_1 (t_1') = 0$.
By the assumption on $x^*$, and following Corollary \ref{corRegions}, there exists a time $T < \infty$ such that
$q_1(T) - rq_2(T) = \kappa$. This is because $r q_2(t) \le r q_2^a < q^a_1 - \kappa$ for all $t \le T$. On the other hand,
it follows trivially from the solution to \eqref{qErlang}, that $q^a_1$ is the globally asymptotically stable point of \eqref{qErlang}.
Hence, for every $\ep > 0$, there exists $t_{\ep}$ such that $q_1(t) > q^a_1 - \ep$ for all $t \ge t_{\ep}$. (This is because, by the
initial conditions, $q_1(t) \le q_1^a$ for all $t$).
Thus, we can find $\ep > 0$ such that
\bequ \label{ineq3}
r q^a_2 < q^a_1 - \ep - \kappa < q_1(t) - \kappa \ \mbox{for all $t \ge t_{\ep}$}.
\eeq

The second scenario of the second case has pool $1$ filled first at time $t_1$, so that $q_1$ starts increasing according to \eqref{qErlang}.
If $q_1$ reaches
$\kappa$ before $q_2$ starts increasing, then we have the same behavior as when $s^a_2 > 0$.
However, if at time $t_2$ in \eqref{t2} $q_2 > 0$, then the two queues will continue increasing independently until time $T$.
Once again, \eqref{ineq3} can be shown to hold, so that $T < \infty$.
\end{proof}

We can easily calculate the exact value of $x(T)$ and use it to calculate the QBD drift rates $\delta_{+}(x(T))$ and $\delta_{-}(x(T))$ to find
whether the positive-recurrence condition \eqref{posrec} holds at $T$, so that $x(T) \in \AA$.

\begin{remark}{$($sharing in the wrong direction$)$}\label{rmWrong}
{\em
In Theorem \ref{thT} we assumed that we never have $z_{2,1} > 0$. The reason is that, if $z_{2,1}$ ever does become positive, then the
fluid $x$ never hits the region $\rS$.
To see that this is so, suppose that for some time $t_4$ sharing is initialized, with class-$2$ fluid flowing to service pool $1$.
Then $z_{2,1}$ is increasing until a time $t_5$ at which $q_1(t_5) - r q_2(t_5) = \kappa$, and the AP begins to operate.
At that time, $z_{2,1}$ will start decreasing according to the ODE
\bes
\dot{z}_{2,1}(t) = - \mu_{2,1}z_{2,1}(t), \quad  t \ge t_5,
\ees
whose unique solution is
\bequ \label{z21}
z_{2,1}(t) = z_{2,1}(t_5) e^{- \mu_{2,1}(t - t_5)}, \quad t \ge t_5.
\eeq
Hence $z_{2,1}$ remains strictly positive for all $t \ge t_5$, and $\rS$ is never hit.

Of course, the fluid state should be approaching a state in $\rS$ as $t$ increases.  However, if there is such a limit point,
then that limit point itself typically will {\em not} be a stationary point, because if $x$ did start at that limit point, then it
will have to continue to move toward the final stationary point $x^{*}$.

More generally, the failure of $z_{2,1}$ to actually reach $0$ in finite time has practical implications for the FQR-T control
in the original queueing system.  It suggests that it should be beneficial to
relax the one-way sharing rule, by introducing lower positive thresholds for
$z_{1,2}$ and $z_{2,1}$. For example, if $z_{2,1} (t) > 0$ at some time $t \ge 0$, and at the same time
sharing should be done in the other direction (because of a new overload incident, with class $1$ being more overloaded
and needing to get help), then we will allow pool $2$ to start helping class $1$, provided that $z_{2,1}$ is smaller
than some threshold $s_{2,1} > 0$.
In that case, if $z_{2,1} (t) > s_{2,1}$, then $z_{2,1}$ will cross the threshold $s_{2,1}$ in finite time,
as can be seen from \eqref{z21}.
It remains to examine the system performance in response to such more complex transient behavior.
}
\end{remark}


For the cases covered by Theorem \ref{thT},
the system evolution over the entire halfline $[0, \infty)$ is a continuous ``soldering'' of the different ODE's, but at the soldering points $t_i$,
the functions under consideration are typically not differentiable.  Hence, there is no single ODE that captures the full dynamics
of the system.  To see why, consider the case
in which $s^a_2 > 0$ and $\kappa > 0$. Then, for $t < t_1$, $q_1(t) = 0$ and $\dot{q}_1 = 0$, but for
$t_1 \le t < t_2$, $q_1(t)$ evolves according to \eqref{qErlang}, which typically has a strictly positive
derivative at $t_1$.  Thus the left and right derivatives at $t_1$ are not equal.
Similar arguments hold for all the other soldering points.

\section{The Algorithm And More Examples}\label{secAlg2}

\subsection{More on the Algorithm}

Let $\{t_m : m = 0,1,2, \dots, n \}$ be the Euler steps, with $t_{m+1}-t_m = h$. In our experiments we found
$h = 0.01$ to be a good candidate for the step size
since it is small enough to minimize numerical errors, while the number of iterations needed for the ODE to reach its stationary point,
is just a few thousands. Hence the algorithm takes only a few seconds to terminate.

Let $\bar{D}(t) \equiv q_1(t) - r q_2(t)$, denote the weighted difference between the two fluid queues.
The discretization of the ODE in the numerical algorithm means that if, at step $k-1$, $\bar{D}(t_{k-1}) \notin \rS^b$ but is close to it,
then $\bar{D}(t_{k})$ may miss the boundary, even though the (continuous) ODE is at the boundary at time $t_{k}$.
For that reason, if $\kappa - h < \bar{D}(t_k) < \kappa + h$, then we force $x(t_k)$ to be in $\rS^b$,
by taking $\bar{D}(t_k) = \kappa$. Once we have $\bar{D}(t_k) = \kappa$ we decide whether to keep staying on the boundary
for the next Euler step, by checking whether \eqref{posrec} holds. According to the relation between the QBD drift rates
at time $t_k$, we decide whether we should apply the AP, in order to find $\pi_{1,2}(t_k)$, or rather set
$\pi_{1,2}(t_k)$ to zero or one.

At any step in the algorithm, we must also decide which ODE to use.
That depends on the state of the system at each time, as described in \S \ref{secTransient}.
If the fluid state is not in $\rS$, as in the initial period of the example in \S \ref{secAlg} and the example below, then
we use the appropriate fluid model, as given in the proof of Theorem \ref{thT}.


\subsection{An Example with $x^* \in \rS^+$}

We now consider the same example as in \S \ref{secExample}, except now we increase the arrival rate
for class $1$ substantially, so that $x^* \in \rS^+$.   In particular, we let $\lambda_1 = 3.0$ instead of
$1.3$.  Once again,
the system is initialized empty. That means that the fluid solution in $\rS$ is moving between the two regions $\rS^b$
and $\rS^+$. In particular, the solution first hits $\rS^b$, as was proved in Theorem \ref{thT},
but it stays there for a short amount of time, and then crosses to $\rS^+$.

As before, we show the results multiplied by $n = 1000$ in the figures below.
We see how $z_{2,2}$ starts increasing up to the time $T$ in which $z_{1,2}(T) + z_{2,2}(T) = m_2$.
At this time $z_{2,2}(T)$ starts decreasing, and is replaced by class-$1$ fluid.
Since no class-$2$ fluid is flowing to either of the service pool,
all the class-$2$ fluid output is due to abandonment.
We can also observe that $z_{2,2}$ eventually hits $0$, even though $z_{2,2}$ satisfies the equation \eqref{z21}.
This is due to the numerical errors, as described in \S \ref{secTransient}.

In steady-state we have
$q^*_2 = \lm_2 / \theta_2 = 900/0.3 = 3000$ and $q^*_1 = (\lm_1 - m_1\mu_{1,1} - m_2\mu_{1,2})/\theta_2 = 4000$,
as in Corollary \ref{corStat} $(ii)$.

\begin{figure}[h!]
  \hfill
  \begin{minipage}[t]{.45\textwidth}
    \begin{center}
      \epsfig{file=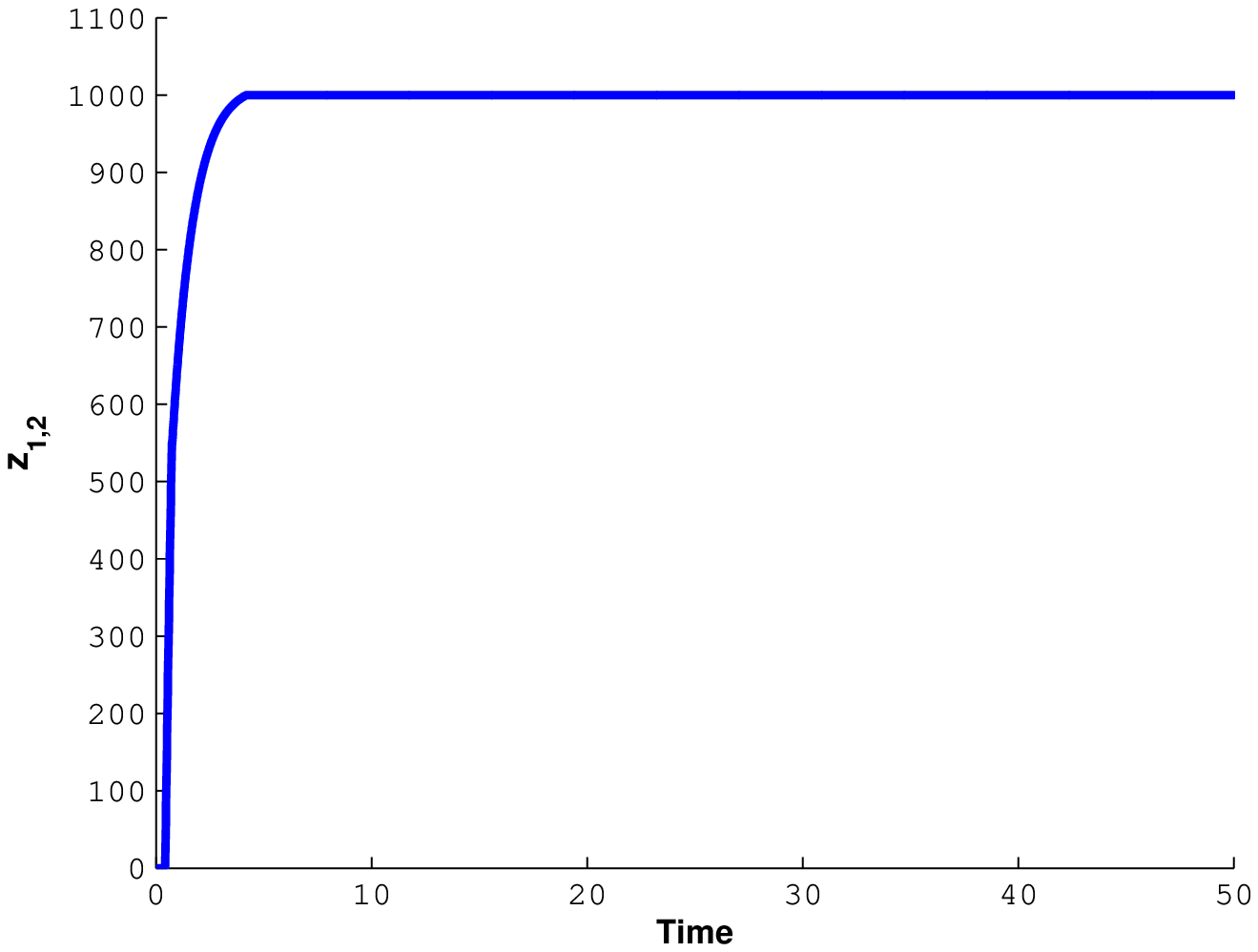, scale=.5}
      \caption{$z_{1,2}$ when $\lm_1$ exceeds the system's capacity.}
      \label{figQBD2Z12}
    \end{center}
  \end{minipage}
  \hfill
  \begin{minipage}[t]{.45\textwidth}
    \begin{center}
      \epsfig{file=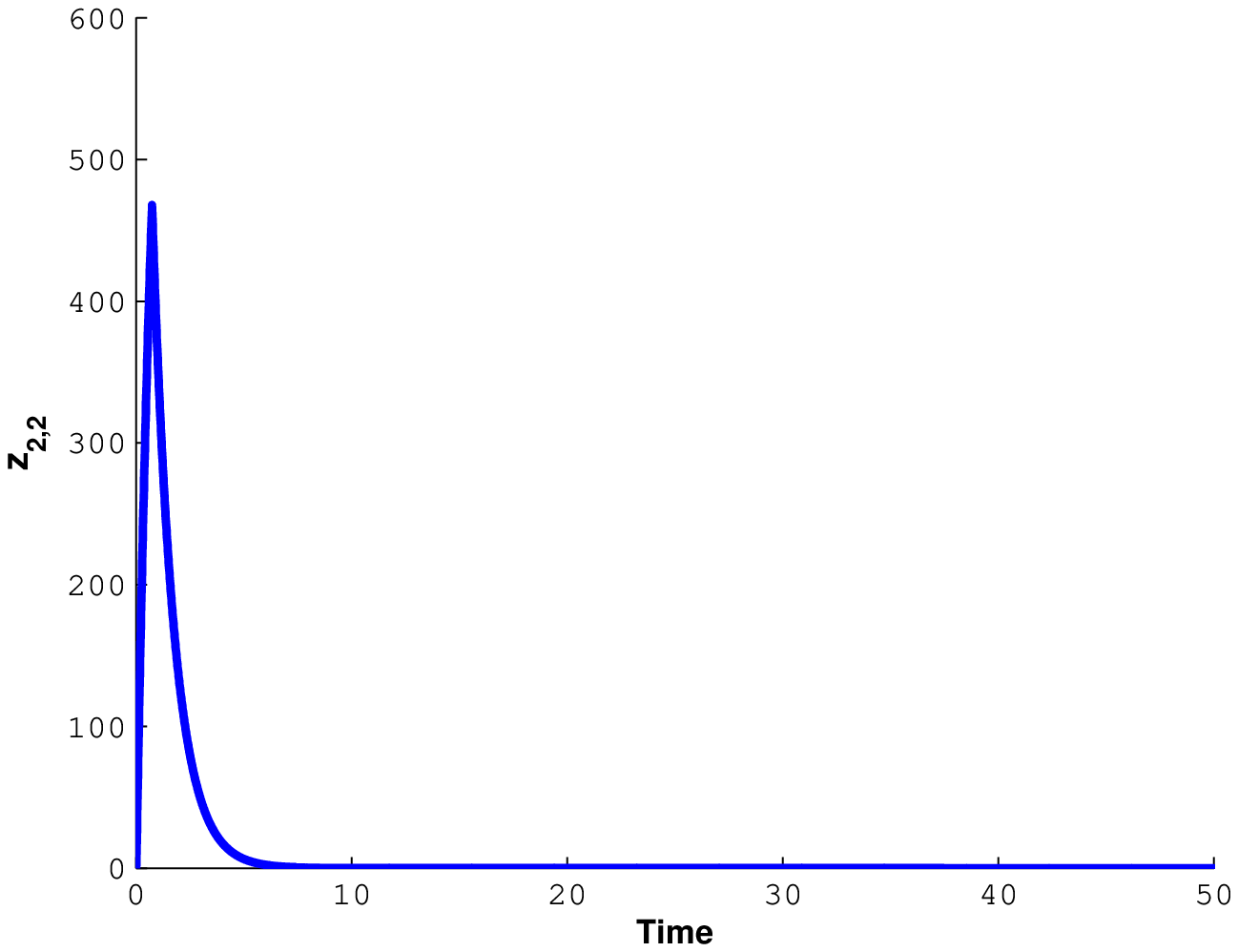, scale=.5}
      \caption{$z_{2,2}$ when $\lm_1$ exceeds the system's capacity.}
      \label{figQBD2pi}
    \end{center}
  \end{minipage}
  \hfill
\end{figure}

\begin{figure}[h!]
  \hfill
  \begin{minipage}[t]{.45\textwidth}
    \begin{center}
      \epsfig{file=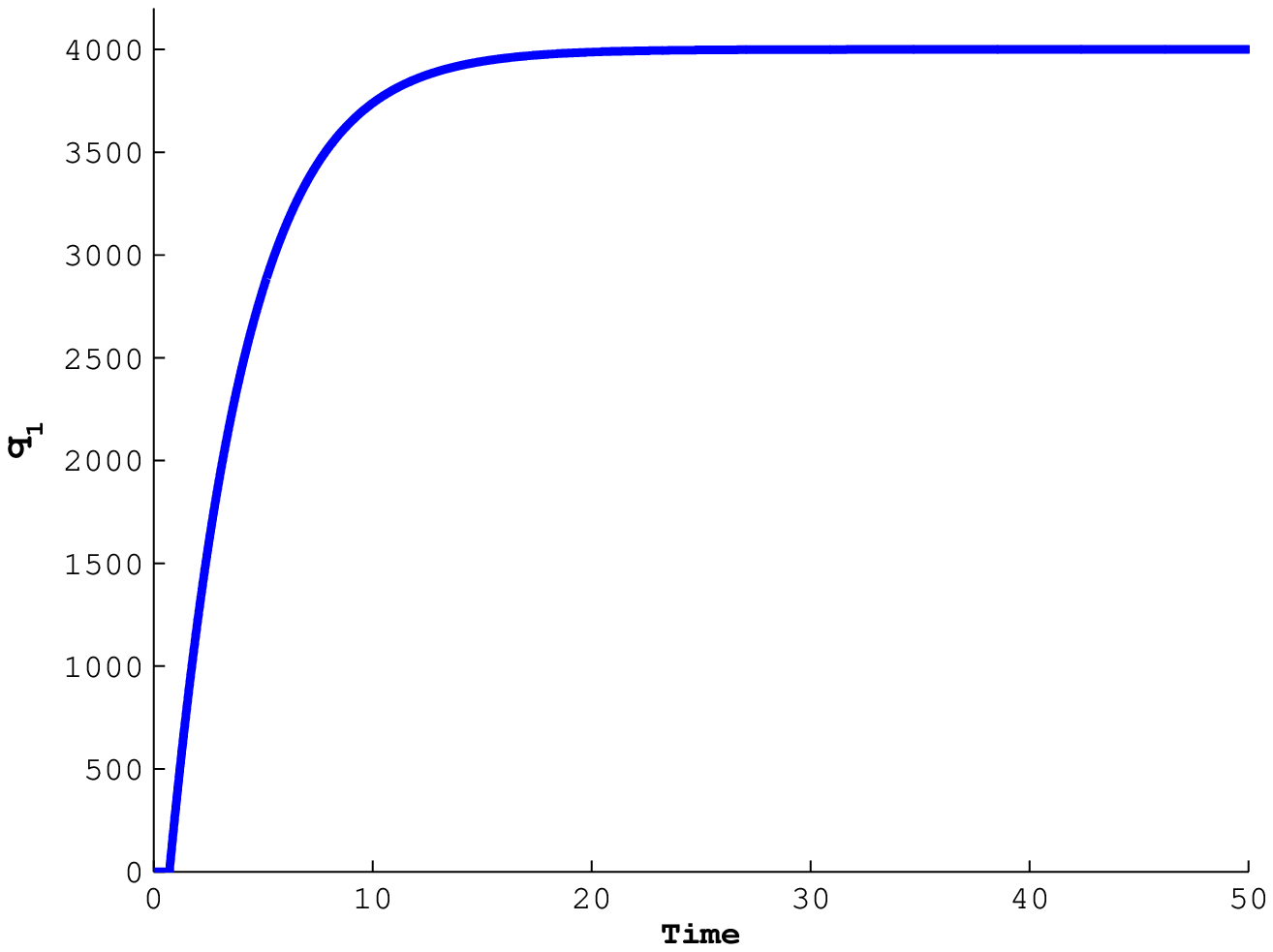, scale=.5}
      \caption{$q_1$ when $\lm_1$ exceeds the system's capacity.}
      \label{figQBD2Q1}
    \end{center}
  \end{minipage}
  \hfill
  \begin{minipage}[t]{.45\textwidth}
    \begin{center}
      \epsfig{file=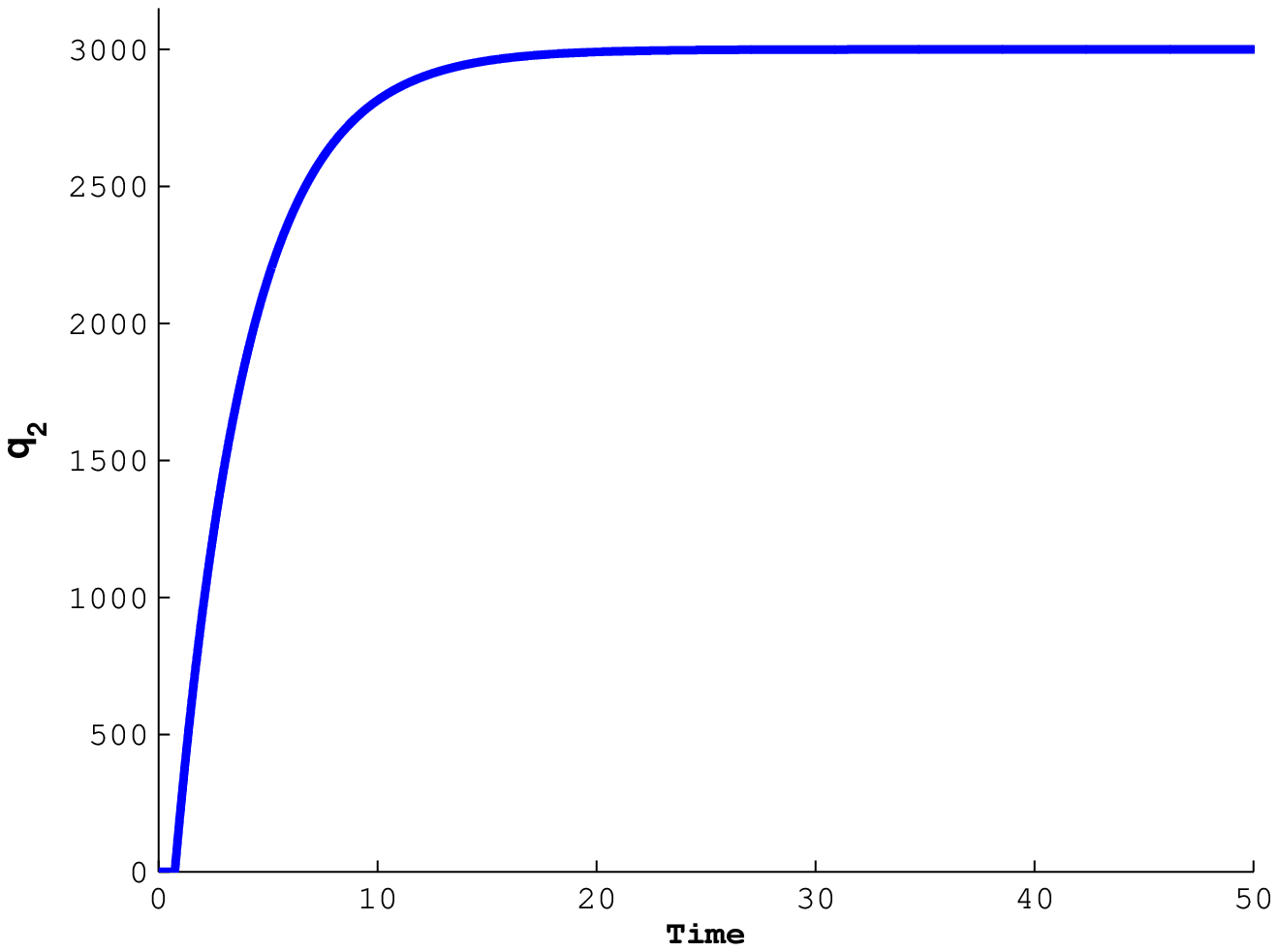, scale=.5}
      \caption{$q_2$ when $\lm_1$ exceeds the system's capacity.}
      \label{figQBD2Q2}
    \end{center}
  \end{minipage}
  \hfill
\end{figure}

\subsection{An Example With $x^* \in \rS^-$ Moving Through $\rS^+$ And $\rS^b$}

The purpose of this example is to illustrate more complex dynamics.
We make class $2$ more overloaded than class $1$, i.e., $q^a_1 < q_2^a$, but we make the rates
faster for class $1$.
Specifically, we considered the following model parameters:
$\lm_1 = 13.0$, $\lambda_2 = 1.5$, $\mu_{1,1} = 10.0$, $\mu_{1,2} = 0.8$, $\mu_{2,2} = 1$
$\theta_1 = 2$, $\theta_2 = 0.2$, $r = 0.8$ and $\kappa = 0$.
Note that the arrival, service and abandonment rates are all substantially greater for class $1$ than for class $2$.
Nevertheless, class $2$ is more overloaded than class $1$:  $q^a_1 = 1.5 < 2.5 = q^a_2$.
For this example, $x^* = (1.5, 2.5, 0) \in \rS^-$.

We applied our algorithm to this example, letting
the system start empty, i.e., $x(0) = 0$.
The results are shown in the remaining figures, where here the results are scale up by multiplying by $n = 100$.
Since the class-$1$ arrival rate is so large,
$q_1$ starts filling up rapidly, and becomes full first; see Figures \ref{fig_z12} and \ref{fig_z22}.
Since $\kappa = 0$, pool $2$ starts helping class $1$ as soon as pool $1$ becomes full.
At first, pool $2$ has spare capacity.
However, soon the spare capacity in pool $2$ is exhausted.
At that time, the solution hits $\rS$.  Even at the time pool $2$ becomes full, we have $q_1 > r q_2$, so that the solution enters $\rS$ via $\rS^+$,
Thus pool $2$ continues to help class $1$ even after it is fully occupied, causing a dip in $z_{2,2}$; see Figure \ref{fig_z22}.
However, the ratio of the queue lengths $q_1/q_2$ decreases from its peak of about $1.4$
until it reaches the target ratio $r = 0.8$,
producing the desired relation $q_1 = r q_2 + \kappa$; see Figure \ref{fig_ratio}
  At that time (about $t = 1.15$, the solution
that was in $\rS^+$ hits the set $\AA$.  At that time, $\pi_{1,2} (x)$
jumps from $1$ down to a value about equal to $0.6$; see Figure \ref{fig_pi}.
For an interval of time, the solution remains in $\AA$ with the queue ratio fixed at the target $r = 0.8$.
However, the load imbalance cause the solution to move within $\AA$, causing $\pi_{1,2} (x)$ to decrease until it reaches $0$ in the set $\AA^-$,
at about time $t = 2.5$.
From $\AA^-$, the solution moves immediately into $\rS^-$, where it rapidly converges to its stationary point.
Of course, the stationary point $x^*$ is not actually reached in finite time.
Indeed, after $S^-$ is reached, $z_{1,2}$ decreases exponentially to $0$, but $z_{1,2} (t) > 0 = z^*$ for all $t$,
consistent with Remark \ref{rmWrong}.

From the figures, it is evident that the numerical solution is not identical to the real solution.
Because of the discrete step sizes in the Euler steps, the numerical solution misses $\AA$ initially.
In fact, we have to design the algorithm such that it ``discovers'' when $\rS^b$ is missed, and then force
it to hit $\rS^b$. That is easy to do since, if $x (t_k) \in \rS^+$ and $x (t_{k+1}) \in \rS^-$,
where $t_k$ is the time of the $k^{\rm th}$ Euler step, $k \ge 1$, then $\rS^b$ must have been missed.
We can then compute $q_1 (t_{k+1})$ and take $r q_2 (t_{k+1}) = q_1 (t_{k+1}) - \kappa$.

This discreteness of the numerical solution explains the erratic behavior of $\pi_{1,2}$ at the hitting time of $\AA$,
shown in Figure \ref{fig_pi}.
The thick vertical line just after time $1$, exactly when $r = 0.8$ for the first time as can be seen from Figure \ref{fig_ratio},
appears because $\pi_{1,2}$ jumps between $0$ and $1$ at each Euler step.
These jumps are caused the solution missing $\rS^b$ at first.
\begin{figure}[h!]
  \hfill
  \begin{minipage}[t]{.45\textwidth}
    \begin{center}
      \epsfig{file=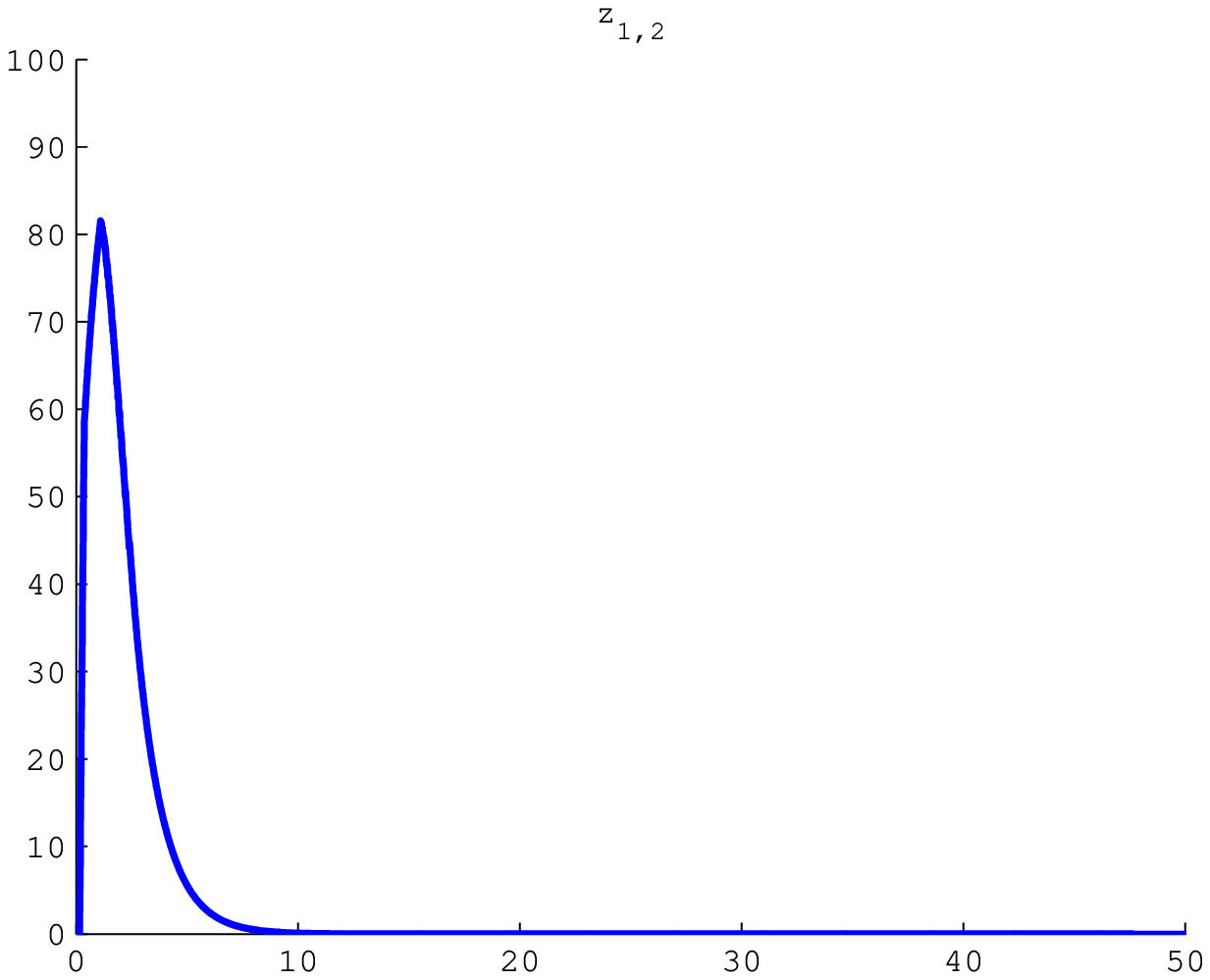, scale=.5}
      \caption{$z_{1,2}$ when $x^* \in \rS^-$.}
      \label{fig_z12}
    \end{center}
  \end{minipage}
  \hfill
  \begin{minipage}[t]{.45\textwidth}
    \begin{center}
      \epsfig{file=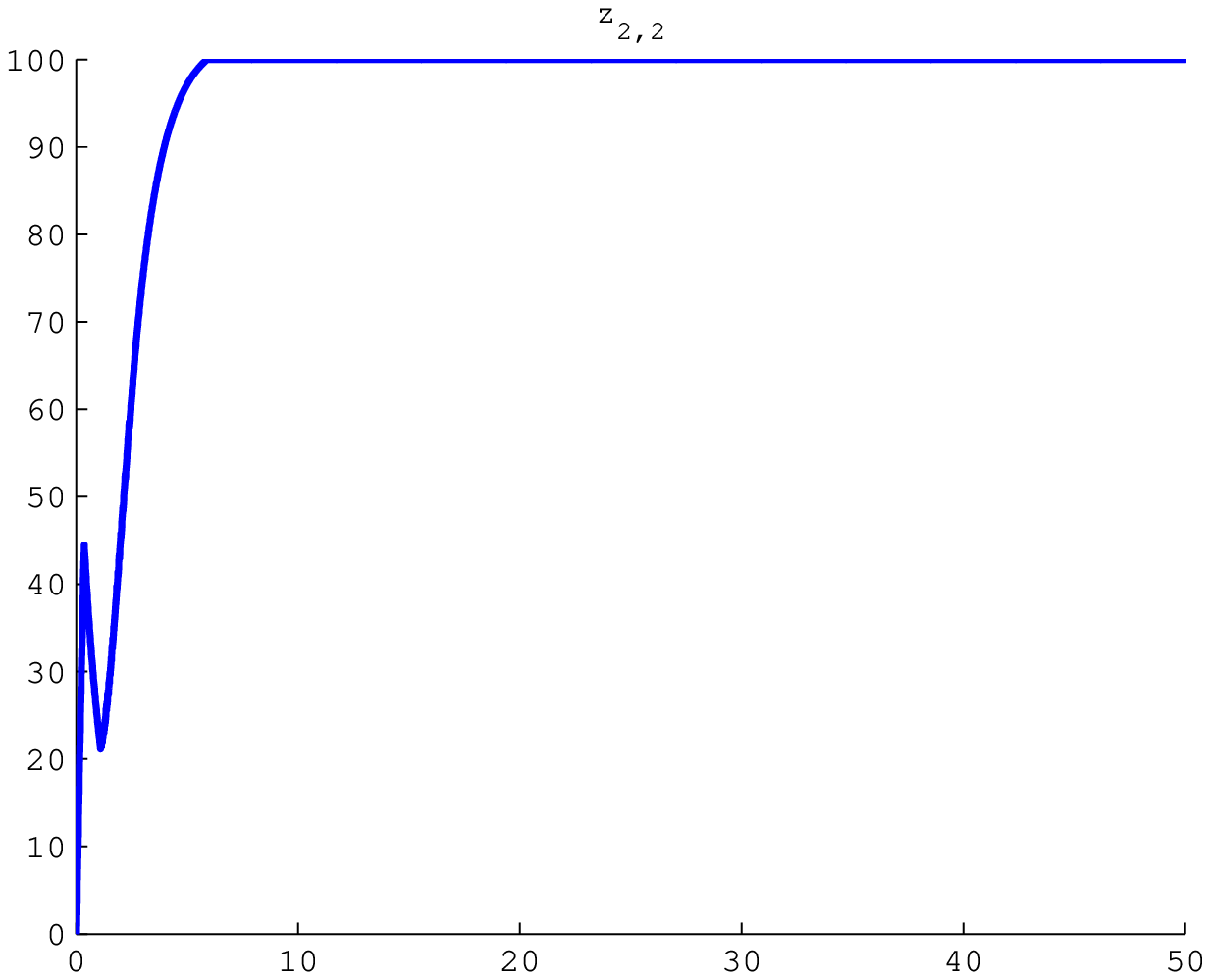, scale=.5}
      \caption{$z_{2,2}$ when $x^* \in \rS^-$.}
      \label{fig_z22}
    \end{center}
  \end{minipage}
  \hfill
\end{figure}

\begin{figure}[h!]
  \hfill
  \begin{minipage}[t]{.45\textwidth}
    \begin{center}
      \epsfig{file=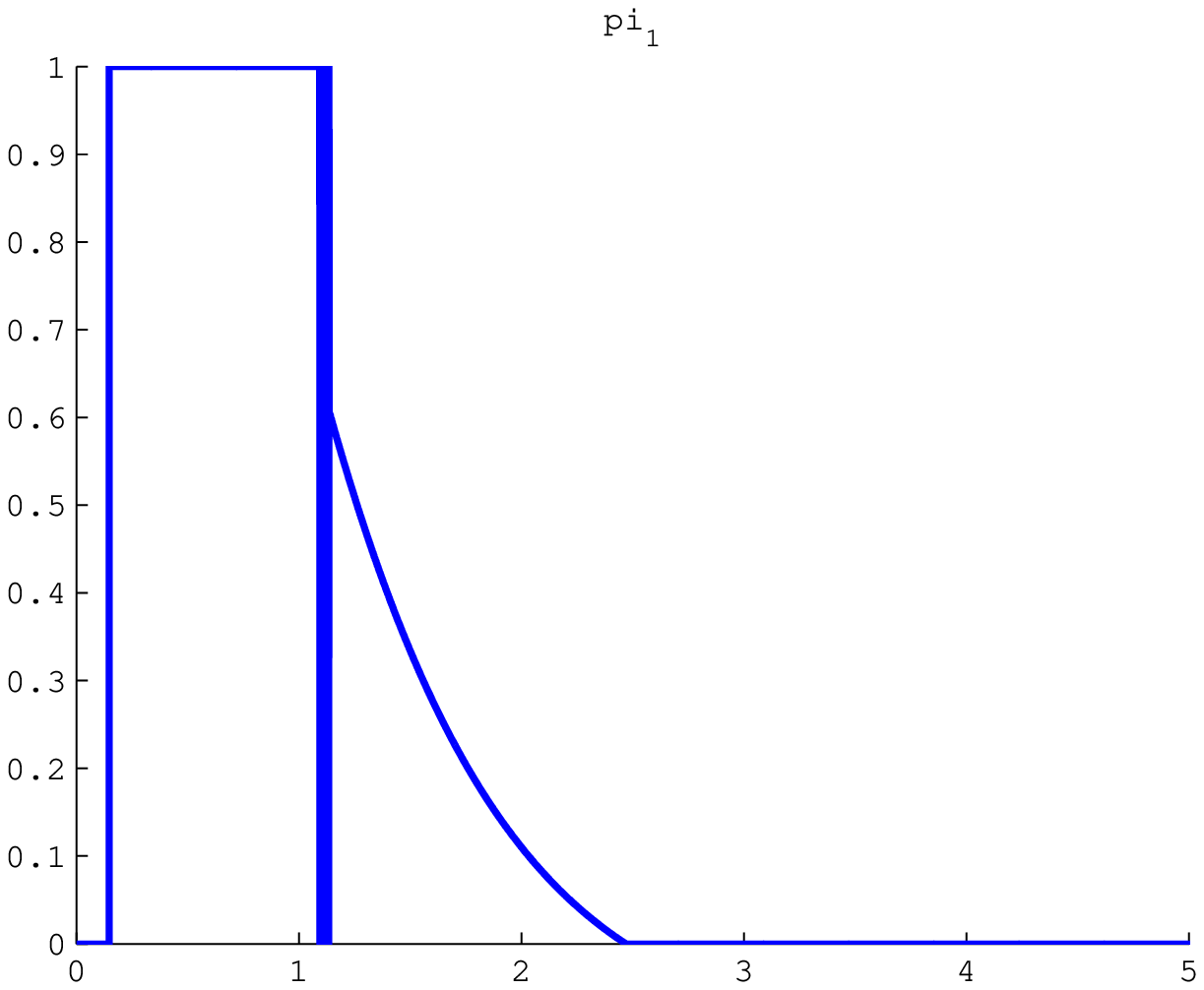, scale=.5}
      \caption{$\pi_{1,2}$ when $x^* \in \rS^-$ over short initial interval.}
      \label{fig_pi}
    \end{center}
  \end{minipage}
  \hfill
  \begin{minipage}[t]{.45\textwidth}
    \begin{center}
      \epsfig{file=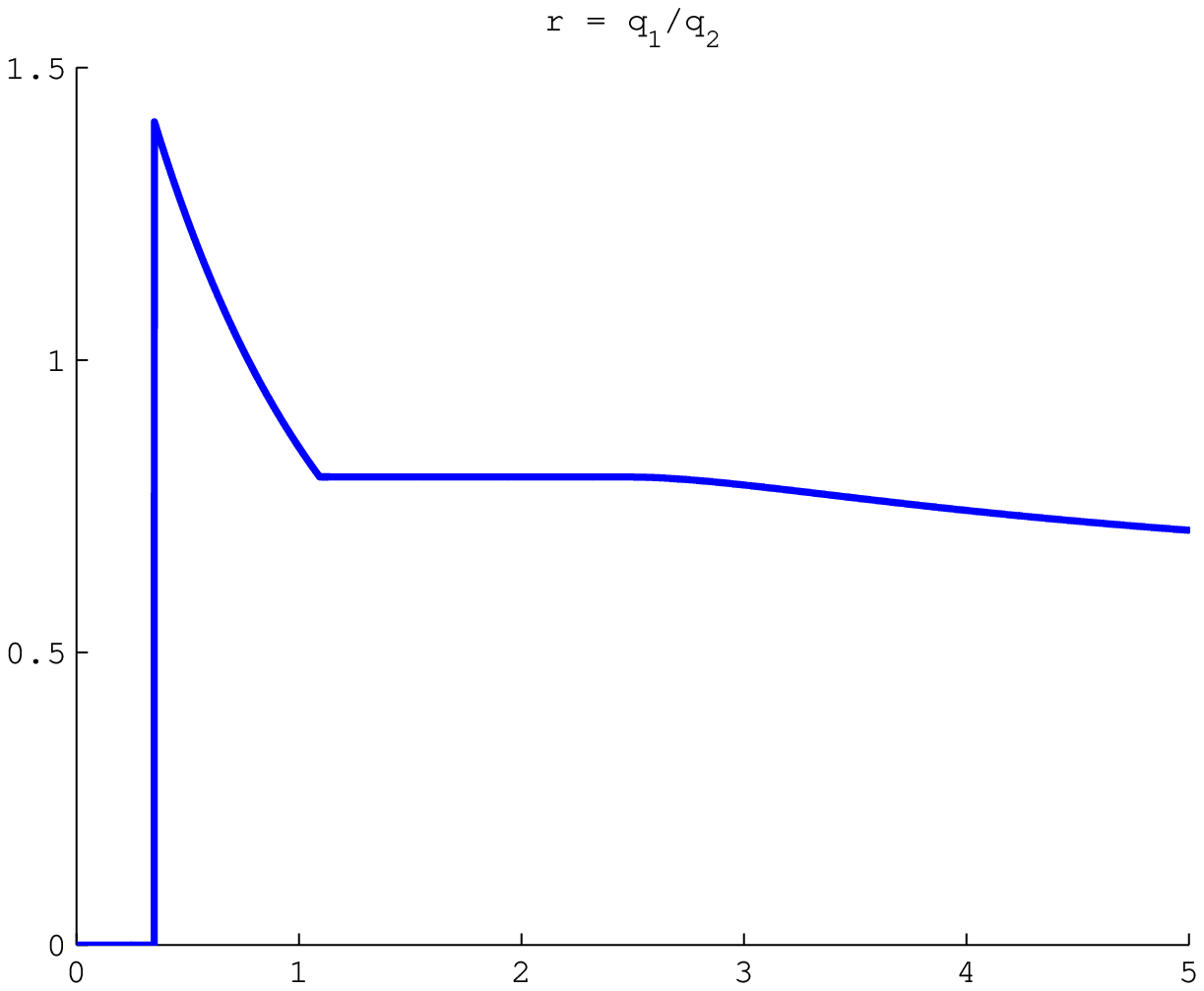, scale=.5}
      \caption{ratio between the queues when $x^* \in \rS^-$ over short initial interval.}
      \label{fig_ratio}
    \end{center}
  \end{minipage}
  \hfill
\end{figure}

\begin{figure}[h!]
  \hfill
  \begin{minipage}[t]{.45\textwidth}
    \begin{center}
      \epsfig{file=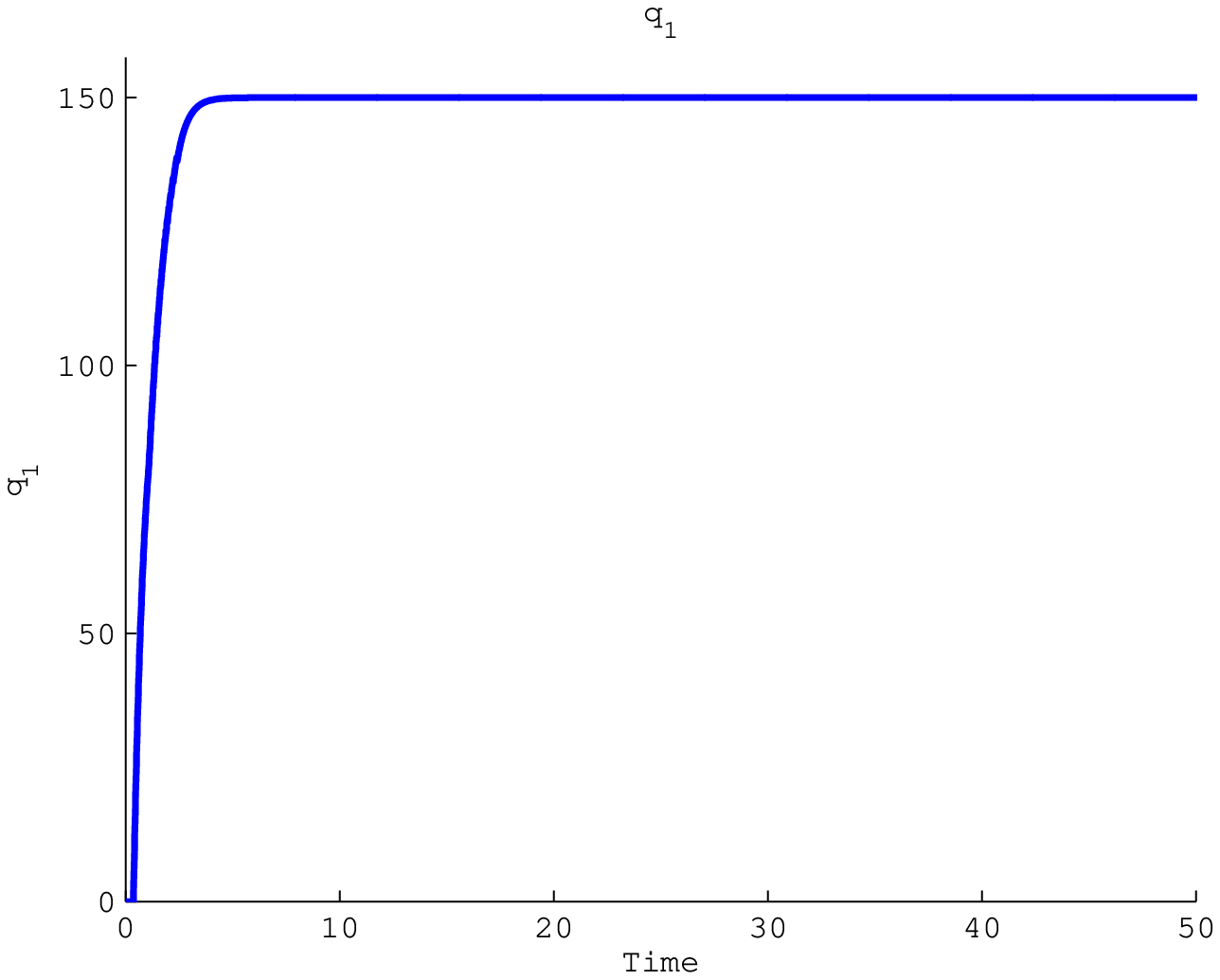, scale=.5}
      \caption{$q_1$ when $x^* \in \rS^-$.}
      \label{fig_q1}
    \end{center}
  \end{minipage}
  \hfill
  \begin{minipage}[t]{.45\textwidth}
    \begin{center}
      \epsfig{file=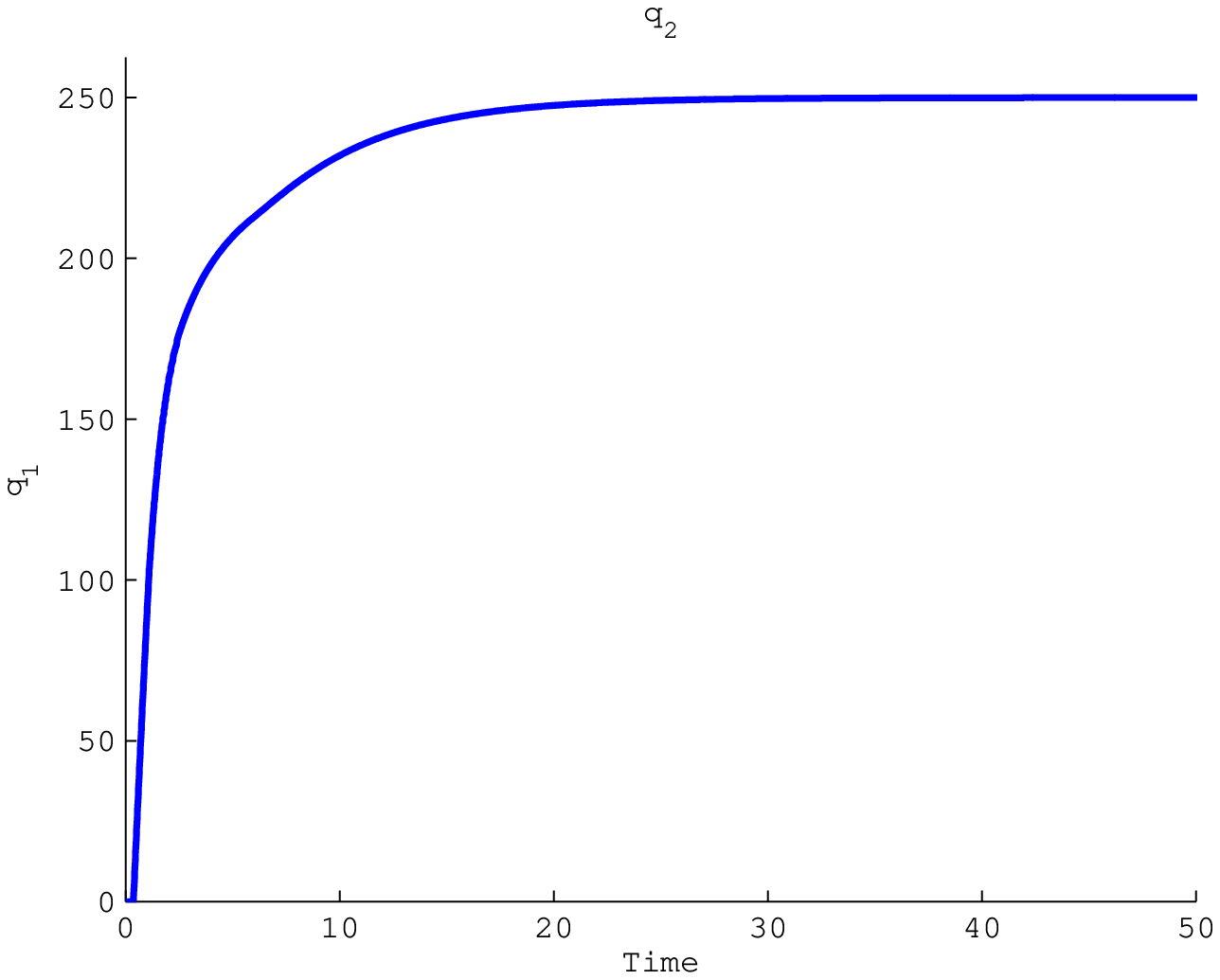, scale=.5}
      \caption{$q_2$ when $x^* \in \rS^-$.}
      \label{fig_q2}
    \end{center}
  \end{minipage}
  \hfill
\end{figure}

\section{Missing Proofs}\label{secProofs}

\paragraph{\sc Proof of Theorem \ref{thBound}}
Since $0 \le z_{1,2} \le m_2$ and $q_i \ge 0$ in $\rS$,
 we only need to prove the upper bounds \eqref{bound1}.
For $i = 1,2$, let $u_i(t)$ be the function describing the queue-length process (of queue $i$) in a modified system with no service processes
(so that all the fluid output is due to abandonment).  The queue-length process in the modified system evolves according
to the ODE
\bes
\dot{u}_i(t) = \lm_i - \theta_i u_i(t), \quad t \ge 0,
\ees
whose solution is
\bes
u_i(t) = \frac{\lm_i}{\theta_i} + \left( u_i(0) - \frac{\lm_i}{\theta_i} \right) e^{-\theta_1 t}, t \ge 0.
\ees
It follows that $u_i(t) \le u_i(0) \vee \lm_i / \theta_i$ and, when $u_i(0) = q_i(0)$, the the right-hand side in \eqref{bound1}
is an upper bound for $u_i(t)$.
We now show that this is also a bound for $q_i (t)$.  For that purpose,
define the auxiliary function $f_i(t) \equiv q_i(t) - u_i(t)$, $t \ge 0$, and observe
that $f_i(0) = 0$ and $\dot{f}_i(0) < 0$.
Hence, $f$ is decreasing at $0$ with $f(t) < f(0)$ for all $t \in [0, \delta)$ for some $\delta > 0$.
This implies that $q_i(t) < u_i(t)$ for all $t \in [0, \delta)$.

We now want to show that $q_i(t) \le u_i(t)$ for all $t \ge 0$.  For a proof by contradiction,
assume that there exists some $t_0 > 0$ such that $q_i(t_0) > u_i(t_0)$, and let
\bes
t_1 \equiv \sup\{ t < t_0 : q_i(t) = u_i(t) \}, \qquad
t_2 \equiv \inf\{ t > t_0 : q_i(t) = u_i(t) \}.
\ees
By the contradictory assumption and the continuity of $q$ and $u$, we have $0 < t_1 < t_0 < t_2$. ($t_2$ may be infinite.)
Then
\bequ \label{q-u}
q_i(t) > u_i(t)  \qforallq t_1 < t < t_2.
\eeq
It follows from the mean-value theorem that there exists some $t_3 \in (t_1, t_0)$ such that
\bes
\dot{f}_i(t_3) = \frac{f(t_0) - f(t_1)}{t_0 - t_1} = \frac{f(t_0)}{t_0 - t_1} > 0.
\ees
Hence, $\dot{q}_i(t_3) > \dot{u}_i(t_3)$. For $i = 1$, this translates to
\bes
\lambda_1  - \mu_{1,1} m_1 - \pi_{1,2} (x(t_3))\left[z_{1,2} (t_3) \mu_{1,2} + z_{2,2} (t_3) \mu_{2,2}\right] - \theta_1 q_1 (t_3)
> \lm_1 - \theta_1 u_1(t_3).
\ees
Thus,
\bes
\theta_1 (q_1(t_3) - u_1(t_3)) < - \mu_{1,1} m_1 - \pi_{1,2} (x(t_3))\left[z_{1,2} (t_3) \mu_{1,2} + z_{2,2} (t_3) \mu_{2,2}\right]
< 0,
\ees
so that $q_1(t_3) < u_1(t_3)$, contradicting \eqref{q-u}.
A similar argument holds for $q_2$.
\qed

\paragraph{\sc Proof of Corollary \ref{corStat}}
If $x^* \in \rS^b$, then the solution to \eqref{EqBalance2} will have $0 \le z \le m_2$, where the exact value of $x^*$ is
readily seen to be the one in $(i)$.
If $x^* \in  \rS^+$, then $q^*_1 - r q^*_2 > \kappa$, so that $\pi^*_{1,2} = 1$.
Plugging $\pi^*_{1,2} = 1$ in the ODE for $z_{1,2}(t)$ in \eqref{odeDetails}, we get $\dot{z}_{1,2}(t) = z_{2,2}(t)\mu_{2,2}$.
Since at stationarity $\dot{z}_{1,2}(t) = 0$, it follows that
$z^*_{2,2} = 0$, which implies that $z^*_{1,2} = m_2$. Plugging this value of $z^*_{1,2}$, together with $\pi^*_{1,2} = 1$ when
$\dot{q}_i(t) = 0$, $i = 1,2$, we get the values of $q^*_1$ and $q^*_2$ as in $(ii)$.

Finally, if $x^* \in \rS^-$, i.e., if $q^*_1 - r q^*_2 < \kappa$, then $\pi^*_{1,2} = 0$, so that, by plugging this value of
$\pi^*_{1,2}$ in the ODE for $z_{1,2}(t)$ in \eqref{odeDetails}, we see that $\dot{z}_{1,2}(t) = \mu_{1,2}z_{1,2}(t)$. Equating to zero,
to get the value at stationarity, we see that $z^*_{1,2} = 0$. Plugging $\pi^*_{1,2} = 0$ and $z^*_{1,2} = 0$ in the ODE for
$q_1(t)$ and $q_2(t)$, and equating these to zero, we get the values in $(iii)$.
\qed

\paragraph{\sc Proof of Corollary \ref{corRegions}}
We prove $(i)$ only. The proofs for $(ii)$ and $(iii)$ are similar.
First assume that $x^* \in \rS^b$. Since $z^*_{1,2} \ge 0$, It follows from the expression for $z^*_{1,2}$ in $(i)$
of Corollary \ref{corStat} that if $q^a_2 \ge 0$ then $q^a_1 - \kappa \ge r q^a_2$. If $s^a_2 > 0$ then
$q^a_1 - \kappa \ge \mu_{1,2} s^a_2 / \theta_1$ by Assumption A. For the other inequality we use the fact that
\bes
z^*_{1,2} = \frac{\theta_1 \theta_2 (q^a_1 - \kappa) - r \theta_1 (\lm_2 - \mu_{2,2} m_2)}{r \theta_1\mu_{2,2} + \theta_2\mu_{1,2}} \le m_2,
\ees
which implies the right-hand inequality in \eqref{RbSta}.

Now Assume that \eqref{RbSta} holds. It follows from the right-hand-side (RHS) inequality and the expression of $z$ in \eqref{z} that
\bes
\bsplit
z & \equiv
\frac{\theta_1 \theta_2 (q^a_1 - \kappa) - r \theta_1 (\lm_2 - \mu_{2,2} m_2)}{r \theta_1\mu_{2,2} + \theta_2\mu_{1,2}} \\
& \le \frac{\theta_1 \theta_2 (r \lm_2 / \theta_2 + \mu_{1,2} m_2 / \theta_1) - r \theta_1 (\lm_2 - \mu_{2,2} m_2)}
{r \theta_1 \mu_{2,2} + \theta_2\mu_{1,2}}
 = m_2.
\end{split}
\ees
From the left-hand inequality in \eqref{RbSta}, we see that, if $s^a_2 = 0$ (and necessarily $q^a_2 \ge 0 = s^a_2$), then
\bes
z \ge \frac{\theta_1 \theta_2 r q^a_2 - r \theta_1 (\lm_2 - \mu_{2,2} m_2)}{r \theta_1\mu_{2,2} + \theta_2\mu_{1,2}} = 0.
\ees
If $s^a_2 > 0$ (and $q^a_2 = 0$), then
\bes
z \ge \frac{\theta_2 \mu_{1,2} s^a_2 - r \theta_1 (\lm_2 - \mu_{2,2} \lm_2)}{r \theta_1\mu_{2,2} + \theta_2\mu_{1,2}} =
\frac{\theta_2 \mu_{1,2} s^a_2 + r \theta_1 \mu_{2,2} s^a_2}{r \theta_1\mu_{2,2} + \theta_2\mu_{1,2}} = s^a_2.
\ees
Thus, if \eqref{RbSta} holds, then $s^a_2 \le z \le m_2$. This was shown to to imply that $x^* \in \rS^b$ in the proof of
Theorem \ref{thODEsteady}. (In fact, we have a stronger result, since we have $z \ge s^a_2$. This is due to the requirement that
$q^a_1 - \kappa \ge \mu_{1,2}s^a_2 / \theta_1$, which is exactly Condition $(I)$ in Assumption A.)

We can show that the inequalities in \eqref{RbSta} are strict if and only if $x^* \in \AA$ by
first observing that the inequalities are strict if and only if $0 < z^* < m_2$, and then directly calculate the QBD drift rates
at the point $x^*$. This is done in \S \ref{secSufficient}; see \eqref{SSrates}. It then follows that \eqref{posrec} holds at $x^*$ if and only if
$0 < z^* < m_2$.
\qed

\section{Conclusions and Further Research} \label{secCon}

In this paper we analyzed the deterministic ODE \eqref{ode}-\eqref{odeDetails},
arising as the MS-HT fluid limit of the overloaded X call-center model operating under the FQR-T control.
In addition to being an interesting mathematical object in its own right, the ODE analyzed in
this paper is a vital part of the FWLLN and FCLT in \cite{PeW10a,PeW10b}.
We prove that the stationary point point $x^*$, which was developed heuristically in \cite{PeW09}
using flow-balance arguments,
is indeed the unique stationary point for the ODE.
Moreover, we provided mild conditions under which the solution $x(t)$ converges to $x^*$ as $t \ra \infty$. We also showed that the
convergence to $x^*$ is exponentially fast, further justifying the steady-state analysis in \cite{PeW09}.

We showed that the existence of a unique solution to the IVP \eqref{IVP} depends heavily on the characterization of
the function $\Psi$ in \eqref{ode} and its topological properties. These properties, in turn, depend
on the state space of $\Psi$, and the regions of the state space in which $\Psi$ is continuous. These regions are further
characterized by the probabilistic properties of the family of FTSP's $\{D_t : t \ge 0 \}$.

To further relate to the model considered in our previous paper \cite{PeW09},
in \S \ref{secTransient} we considered the system at the time when the arrival rates first change.
At that time, the system will typically be underloaded, so that the state space should not be $\rS$.
After the change, we assume that
the arrival rates are larger than the total service rate of the two pools.
Specifically, we assumed Assumption A in \S \ref{secStat}.
We then considered the first transient
period $[0, T)$, where $T$ is the time at which $\rS^b$ is hit.
Using alternative fluid models (ODE's), we showed that $T < \infty$, under the conditions of Theorem \ref{thT}.
The solutions to the fluid models during the first transient period are all exponential functions, so that this
period also passes exponentially fast.

Finally, we developed an efficient algorithm to solve the IVP \eqref{IVP}, based on the matrix geometric method.
This algorithm solves the different fluid models described in \S \ref{secTransient}, and combines these solutions with
the solution to \eqref{odeDetails} once the set $\rA$ is hit, where the AP takes place.

It remains to quantify or at least bound the number of times the fluid solution moves from one of the regions $\rS^+$, $\rS^-$ or $\rS^b$ to
one of the others.
Of course, the complexity of a solution is constrained by the fact that the solution path cannot cross over itself.
It also remains to consider more complicated dynamics than provided by a single change in the arrival rates.  The
numerical algorithm applies more generally, but it remains to establish mathematical results and examine the performance.
For example, it remains to consider a
second overload incident happening before the system has recovered from the first one.
Finally, it remains to establish analogs of the results here for more complex models, e.g., with more than two classes and/or more than two service pools.
\end{appendix}

\end{document}